%% file: KKS_PartialActions.tex
\documentclass[letterpaper, 11pt]{amsart}
\usepackage[T1]{fontenc}
\usepackage[utf8]{inputenc}

\usepackage{mathpazo}

\usepackage[margin = 1.35in]{geometry}
\usepackage[dvipsnames]{xcolor}

\usepackage{enumitem}
\setenumerate[0]{label = (\roman*)}

\usepackage{hyperref}
\newcommand\myshade{90}
\hypersetup{
  linkcolor  = NavyBlue!\myshade!black,
  citecolor  = WildStrawberry!\myshade!black,
  urlcolor   = Magenta!\myshade!black,
  colorlinks = true,
  breaklinks = true,
  pdftitle   = {The ideal intersection property for partial C*-dynamical systems},
  pdfauthor  = {Matthew Kennedy, Larissa Kroell, Camila Sehnem},
}

\usepackage{tikz}
\usetikzlibrary{cd}
\usetikzlibrary{babel}
\usetikzlibrary{shapes,backgrounds}
\usetikzlibrary{arrows}

\usepackage[backend= biber, giveninits=true,uniquename=init,maxbibnames=10,sorting=nyt,natbib=true, style=alphabetic]{biblatex}
\bibliography{references_partial}

\usepackage{amsmath,amsthm,amssymb}
\usepackage{faktor}
\usepackage[nameinlink,capitalize,sort]{cleveref}

\input{commands}

\title{Injective envelopes of partial \texorpdfstring{$\Cast$}{C*}-dynamical systems}

\author{Matthew Kennedy}
\address{Department of Pure Mathematics, University of Waterloo, 200 University Avenue West, Waterloo, Ontario, N2L 3G1, Canada} \email{matt.kennedy@uwaterloo.ca}

\author{Larissa Kroell}
\address{Department of Pure Mathematics, University of Waterloo, 200 University Avenue West, Waterloo, Ontario, N2L 3G1, Canada} \email{lgkroell@gmail.com}

\author{Camila F. Sehnem}
\address{RIMS, Kyoto University, Kyoto 606-8502, Japan}
\email{csehnem@kurims.kyoto-u.ac.jp}

\begin{document}

\begin{abstract}

We extend Hamana's theory of injective envelopes, along with several key features of the theory, to the realm of partial $\Cast$-dynamical systems. In particular, we show that a partial $\Cast$-dynamical system has the ideal intersection property if and only if its injective envelope does. A key ingredient in our arguments is a new kind of unitization of a partial action on a unital $\Cast$-algebra~$A$ arising from the $\Cast$-algebra generated by the orbits of~$A$ in its injective envelope~$I(A)$. For an arbitrary unital partial $\Cast$-dynamical system, which is known to have an enveloping action, we establish a natural relationship between the injective envelope of the system and the injective envelope of its enveloping action. For an abelian partial $\Cast$-dynamical system, we show that our construction coincides with the algebra of continuous functions on the Furstenberg boundary of the corresponding transformation groupoid. It is crucial in our work to consider a notion of generalized unital partial $\Cast$-dynamical systems, in which the unital ideals are replaced by unital hereditary subalgebras.

\end{abstract}

\maketitle

\section{Introduction}

Partial group actions are a generalization of group actions, and appear naturally in dynamics and operator algebras. Perhaps the simplest class of examples of a partial action of a discrete group~$G$ on a $\Cast$-algebra arises from the restriction of an action of~$G$ on a compact space to not necessarily invariant open subsets, or more generally, from the restriction of an action of $G$ on a $\Cast$-algebra to ideals. However, there are many partial actions that do not arise in this way.

A partial action of~$G$ on a $\Cast$-algebra~$A$ consists of a family of ideals $\{A_g\}_{g\in G}$ in $A$ and ${}^*$-isomorphisms $\{\alpha_g\colon A_{g^\inv}\to A_g\}_{g\in G}$ such that the pair $\alpha=(\{A_g\}_{g\in G}, \{\alpha_g\}_{g\in G})$ satisfies certain axioms, as if it was the restriction of an action of~$G$ to the ideals $\{A_g\}_{g \in G}$. The triple $(A, G,\alpha)$ is then called a partial $\Cast$-dynamical system. 

Partial actions and their crossed products were introduced by McClanahan in~\cite{McClan95}, following seminal work of Exel on partial actions of the integers and $\Cast$-algebras carrying a circle action~\cite{Exe94}. Several classes of $\Cast$-algebras have been described as reduced crossed products associated to a partial action on a $\Cast$-algebra, see for example \cite{Exe94,ExLa99,ExStar16} and \cite[Theorem 5.6.41]{CELY17}. In addition, under mild assumptions a $\Cast$-algebra carrying a topological grading over~$G$ arises from a partial action of~$G$ on the fixed-point algebra up to stabilization \cite{Seh14} (see also ~\cite[Theorem~27.11]{Exel17}). More recently, partial actions and their twisted version have occurred naturally in the analysis of the structure of reduced crossed products associated to (global) group actions on~$\Cast$-algebras, see \cite{KenScha19,Brown2019,KenUrs24}.

In a series of papers starting in the late 70s, Hamana introduced and studied injective envelopes of objects in various categories of operator systems and $\Cast$-algebras. Notably, Hamana showed in \cite{Ham79} that every operator system has an injective envelope in the category of operator systems with unital completely positive (ucp) maps as morphisms, and used this to establish Arveson's conjecture on the existence of the $\Cast$-envelope of an operator system \cite{Arv69}. The existence of injective envelopes of $\Cast$-dynamical systems was established in \cite{Ham85}, extending much of the theory of injective envelopes of $\Cast$-algebras \cite{Ham79_CStar}. A few decades later, in an important application of Hamana's theory, Kalantar and Kennedy characterized simplicity of the reduced group $\Cast$-algebra of~$G$ in terms of the injective envelope of the trivial $\Cast$-dynamical system $(\mathbb{C}, G)$ \cite{KalKen17}. Since then injective envelopes of $\Cast$-algebras in general and of $\Cast$-dynamical systems in particular have played a crucial role in the analysis of the ideal structure of reduced crossed products associated to group actions, see \cite{KenScha19, Kaw17, Breu+17, Bryd22, GefUrs23, KKS25}. 

In this paper, we extend Hamana's theory of injective envelopes to the realm of partial $\Cast$-dynamical systems. We pay special attention to the relationship between the ideal structure of the reduced crossed product associated to a partial $\Cast$-dynamical system and that of the reduced crossed product associated to its injective envelope.  

As in \cite{Ham85, Bryd22, Ken+21}, the first step towards extending the notion of injective envelopes of $\Cast$-dynamical systems to the framework of partial actions is to find the correct category, which should have sufficiently many injective objects in a suitable sense. Our approach to this problem requires that we initially restrict ourselves to the setting of \emph{unital partial actions}, meaning that the corresponding family of ideals is determined by central projections in the ambient $\Cast$-algebra. We were also naturally led to a generalization of the notion of a (unital) partial action of~$G$ on a $\Cast$-algebra~$A$, in which the family of unital ideals is replaced by hereditary subalgebras that are determined by a commuting family of projections in $A$. One of our main examples of this arises from a partial ${}^*$-representation of $G$ on a Hilbert space $\hilb$, which canonically induces a generalized unital partial action of~$G$ on~$B(\hilb)$. Another important example in our work comes from the reduced crossed product associated to a unital partial action. More generally, a unital partial action of~$G$ on a C*-algebra $A$ restricts to a generalized unital partial action on a corner, provided that the corresponding family of projections is mutually commuting.

We thus consider in this paper the category whose objects are generalized unital partial $\Cast$-dynamical systems. An arrow from $(A, G,\alpha)$ to $(B,G, \beta)$ is what we call a \emph{$G$-morphism}, meaning a $G$-equivariant ucp map $\phi\colon A\to B $ that is $G$-unital, see~\cref{def: g-morphism}. When $\ell^\infty(G,A)$ is endowed with the left translation action, there is a $G$-embedding of $A$ into a corner of $\ell^\infty(G,A)$. This corner is injective in the category of generalized unital partial $\Cast$-dynamical systems whenever $A$ is injective. This allows us to establish the first main result of this paper.

\begin{introthm}[Theorem \ref{thm: existence inj env}]\label{introthm: existence inj env}
Let $(A, G, \alpha)$ be a unital partial $\Cast$-dynamical system. Then $(A, G, \alpha)$ has an injective envelope $((I_G(A), G,I_G(\alpha)),\kappa)$. Moreover, $I_G(\alpha)=\partacg{I_G(A)}{I_G(\alpha)}$ is a unital partial action, and $(I_G(A), G,I_G(\alpha))$ is unique in the sense that if $((B, G,\beta), \kappa_B)$ is an injective envelope of $(A, G, \alpha)$, then there is a unique $G$-isomorphism $\psi\colon I_G(A)\to B$ such that $\psi\circ\kappa=\kappa_B$.
\end{introthm}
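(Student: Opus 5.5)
The plan is to run Hamana's argument inside the category of generalized unital partial $\Cast$-dynamical systems with $G$-morphisms as arrows.

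\textbf{Step 1: an injective object containing $A$.} Embed $A$ unitally into $B(\hilb)$, which is injective. Then form $\ell^\infty(G,B(\hilb))$ with the left translation action. The map $a\mapsto (g\mapsto \alpha_{g^{-1}}(a\,1_g))$, where $1_g$ denotes the unit of $A_g$ and the value is read as zero outside the domain, is a $G$-embedding of $A$ into the corner $p\,\ell^\infty(G,B(\hilb))\,p$. Here $p=(g\mapsto 1_{g^{-1}})$, and the restricted translation action on this corner is a generalized unital partial action. By the result stated before the theorem, this corner $(W,G,\omega)$ is injective in our category. I will check that the embedding is $G$-unital and equivariant directly from the partial action axioms.

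\textbf{Step 2: existence via minimal projections.} Consider the set of $G$-morphisms $\phi\colon W\to W$ that restrict to the identity on $A$. I will order it by $\phi\preceq\psi$ iff $\|\phi(x)\|$-seminorms satisfy $p_\phi\le p_\psi$, where $p_\phi(x)=\|\phi(x)\|$. Using point-weak$^*$ compactness, which applies because $W$ is a dual-space corner and $G$-equivariance and $G$-unitality are preserved under point-weak$^*$ limits, Zorn's lemma gives a minimal seminorm. Hamana's rigidity/idempotence argument then produces a minimal idempotent $G$-morphism $\phi_0$. Its image $I_G(A)=\phi_0(W)$ becomes a $\Cast$-algebra under the Choi--Effros product $x\circ y=\phi_0(xy)$.

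The partial action on $I_G(A)$ is defined by restricting $\omega$. Its domains are the hereditary subalgebras cut out by the projections $\phi_0(\omega$-units$)$. The main obstacle is to show that these projections are \emph{central} in $I_G(A)$ and that the restricted maps are $^*$-isomorphisms for the Choi--Effros product, so that $I_G(\alpha)$ is a genuine unital partial action. To do this I will use that the images of the units $1_g$ of $A$ lie in the multiplicative domain of $\phi_0$. Since $1_g$ is central in $A$ and $\phi_0$ is rigid, the map $x\mapsto 1_g\circ x\circ 1_g+(1-1_g)\circ x\circ(1-1_g)$ is a $G$-morphism fixing $A$. By rigidity it equals the identity, which forces $1_g$ to be central.

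\textbf{Step 3: essentiality, rigidity, uniqueness.} Rigidity says that any $G$-morphism $I_G(A)\to I_G(A)$ fixing $A$ is the identity; this follows from the minimality of $\phi_0$. Essentiality says that a $G$-morphism $\psi$ on $I_G(A)$ which is completely isometric on $A$ is completely isometric. To see this, extend a left inverse by injectivity, compose, and apply rigidity. Finally, let $((B,G,\beta),\kappa_B)$ be another envelope. Injectivity gives $G$-morphisms $\psi\colon I_G(A)\to B$ and $\theta\colon B\to I_G(A)$ compatible with the embeddings. Rigidity of each envelope gives $\theta\psi=\mathrm{id}$ and $\psi\theta=\mathrm{id}$, so $\psi$ is a complete order isomorphism, hence a $^*$-isomorphism intertwining the partial actions. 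Uniqueness of $\psi$ also follows from rigidity.
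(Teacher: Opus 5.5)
Your existence, rigidity, essentiality and uniqueness steps are sound. Running Hamana's minimal-seminorm argument in the von Neumann algebra $W=p\ell^\infty(G,B(\hilb))p$ is a legitimate alternative to the paper's Ellis-semigroup (minimal left ideal) argument. Compositions, convex combinations and point-weak$^*$ limits of $G$-morphisms fixing $\kappa(A)$ are again such. Essentiality goes through because a $G$-embedding here is merely a completely isometric $G$-morphism, so $G$-injectivity applies directly to $\psi\circ\kappa$. For uniqueness you do not yet know that $B$ is rigid, but $\psi\theta$ is an idempotent $G$-morphism fixing $\kappa_B(A)$; it is completely isometric by essentiality of $B$, hence the identity.

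The genuine gap is the step you flag as the main obstacle: centrality of the units. Write $r_k=\kappa(1_k)$ and $\theta(x)=r_gxr_g+(1-r_g)x(1-r_g)$. This map is ucp, $G$-unital and fixes $A$, but it is not $G$-equivariant, so rigidity cannot be applied to it. For $x\in I_G(A)_{h^{-1}}$ we have $\omega_h(r_{h^{-1}}r_g)=\kappa(\alpha_h(1_{h^{-1}}1_g))=r_hr_{hg}$, whence
\[
\omega_h(\theta(x))=r_{hg}\,\omega_h(x)\,r_{hg}+(1-r_{hg})\,\omega_h(x)\,(1-r_{hg}).
\]
This is the pinching of $\omega_h(x)$ by $r_{hg}$, whereas $\theta(\omega_h(x))$ is its pinching by $r_g$. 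Taking $h=g^{-1}$, equivariance of $\theta$ says precisely that $r_g$ commutes with $I_G(A)_{g^{-1}}$, which is what you want to prove. So the argument is circular: a posteriori $\theta=\mathrm{id}$, but you cannot certify beforehand that it is a $G$-morphism.

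What is needed is a realization of $I_G(A)$ inside an ambient algebra in which the units are visibly central. Combine this with the observation that if $E$ is an idempotent ucp map and $c\in\operatorname{im}E$ is central in the ambient algebra, then $c\cdot y=E(cy)=E(yc)=y\cdot c$ for the Choi--Effros product.

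The paper does this in Lemma~\ref{lem:isomorphism-envelopes-AB}(2). Since the $A_h$ are ideals, the $1_h$ are central in $I(A)$ and $I(\alpha)$ acts by ${}^*$-isomorphisms between ideals. Hence $\iota\colon I(A)\to p\ell^\infty(G,I(A))p$ maps $Z(I(A))$ into the center of $\ell^\infty(G,I(A))$. Extending $\iota|_A$ to $I_G(A)$ by $G$-injectivity, and using essentiality and rigidity, identifies $I_G(A)$ with the range of an idempotent $G$-morphism on $p\ell^\infty(G,I(A))p$, so centrality transfers.

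Inside your own construction there is an equally short repair. Replace $B(\hilb)$ by $M=\{\pi(1_h):h\in G\}'$, where $\pi$ is your embedding of $A$. Then $M$ contains $\pi(A)$, is injective (its commutant is abelian), and has every $\pi(1_h)$ in its center. So $\kappa(1_g)(t)=\pi(1_{t^{-1}}1_{t^{-1}g})$ is central in $p\ell^\infty(G,M)p$, and the Choi--Effros remark gives centrality in $\operatorname{im}\phi_0$ at once.

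One further caution: the proposition giving $G$-injectivity of $p\ell^\infty(G,\cdot)p$ is stated for an injective algebra carrying a generalized unital partial action with units $1_g$. For $B(\hilb)$ or $M$, either obtain such an action from a covariant representation, as the paper does, or observe that the proof only uses the commuting projections.
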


If $\alpha$ is an action, then $I_G(A)$ coincides with the $G$-injective envelope of~$A$ in the sense of Hamana. We also note that the $G$-embedding of~$A$ into the corner of $\ell^\infty(G,A)$ that we use in this paper has appeared before in the context of enveloping actions, see \cite[Theorem 4.5]{DokEx05} and \cite[Proposition~2.7]{Abetal22}. In fact, unital partial actions are precisely the partial actions on unital $\Cast$-algebras that admit enveloping actions in the sense of Abadie \cite{Aba03}. We show that the injective envelope of a unital partial $\Cast$-dynamical system has a natural relationship with the injective envelope associated to its enveloping action.

\begin{introthm}[Theorem~\ref{thm: corner of enveloping action}]\label{introthm: corner of enveloping action}
    Let $(A, G, \alpha)$ be a unital partial $\Cast$-dynamical system and $(\cC^A, \zeta^A)$ the enveloping action of~$\alpha$. Let $(I_G(A),G,I_G(\alpha))$ be the injective envelope of $(A, G, \alpha)$ and $(I_G(\cC^A), G, I_G(\zeta^A))$ the injective envelope of $(\cC^A, G, \zeta^A)$. Then $I_G(A) = I_G(\cC^A)1_A$, and $I_G(\alpha)$ is the restriction of $(I_G(\cC^A), I_G(\zeta^A))$.
\end{introthm}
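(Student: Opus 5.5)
The strategy is to show that the corner $I_G(\cC^A)1_A$, with the restricted partial action, satisfies the defining properties of an injective envelope of $(A,G,\alpha)$. The uniqueness part of Theorem~\ref{thm: existence inj env} then identifies it with $(I_G(A),G,I_G(\alpha))$.

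Recall that $\cC^A$ is the $\Cast$-algebra generated by $\bigcup_g \zeta^A_g(A)$, with $A$ sitting in it as the ideal cut down by the central projection $1_A$. The restriction of $\zeta^A$ to $A$ is $\alpha$. Write $\iota\colon \cC^A\to I_G(\cC^A)$ for the $G$-embedding. Put $p=\iota(1_A)$ and $B=pI_G(\cC^A)p$, with $B_g = pI_G(\zeta^A)_g(p)\,I_G(\cC^A)\,p$ and $\beta_g$ the restriction of $I_G(\zeta^A)_g$ to $B_{g^{-1}}$. The steps are as follows.

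\textbf{Step 1: $p$ is central.} Since $1_A$ is central in $\cC^A$, it lies in the multiplicative domain argument for $I_G(\cC^A)$. Hamana's rigidity gives that the centre of $\iota(\cC^A)$ embeds in the centre of $I_G(\cC^A)$. More concretely, $x\mapsto pxp+(1-p)x(1-p)$ is a $G$-equivariant? No: it is only a ucp map fixing $\iota(\cC^A)$, hence the identity by rigidity of the (non-equivariant) injective envelope $I(\cC^A)\subseteq I_G(\cC^A)$. For this I would use that $I_G(\cC^A)$ is an injective $\Cast$-algebra containing $I(\cC^A)$. It is cleaner to apply rigidity in $I_G$ to the conditional-expectation-type map. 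So $B=I_G(\cC^A)p$ is a unital ideal, and $\beta$ is a unital partial action with $B_g=B\,I_G(\zeta^A)_g(p)$. The inclusion $A\to B$ is a $G$-embedding.

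\textbf{Step 2: $B$ is injective in the category of generalized unital partial systems.} Given $G$-morphisms $\phi\colon E\to F$ (a $G$-embedding) and $\psi\colon E\to B$, I would pass to enveloping-type global data. Tensor-induce, or use the $\ell^\infty(G,\cdot)$ corner picture: embed $F$ into the corner of $\ell^\infty(G,F)$ as in the construction before Theorem~\ref{thm: existence inj env}. That system carries a global action, and $\psi$ induces a $G$-equivariant ucp map $\ell^\infty(G,E)\supseteq\cdots\to I_G(\cC^A)$ via $f\mapsto\sum$-type formulas. Extend by $G$-injectivity of $I_G(\cC^A)$, then compress by $p$. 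An alternative that is probably easier is to use the paper's criterion that a corner of an injective global system cut by a central projection is injective in the partial category. This is the same mechanism as for $\ell^\infty(G,A)$ with $A$ injective.

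\textbf{Step 3: essentiality and rigidity.} Let $\phi\colon B\to B$ be a $G$-morphism fixing $A$. I would extend $\phi$ to $\tilde\phi\colon I_G(\cC^A)\to I_G(\cC^A)$ by
\[
\tilde\phi(x)=\sum_{g}{}^{\!\!\text{(strict)}}\; \cdots ,
\]
but a cleaner route is to define $\Phi(x)$ using $G$-equivariance on the translates $I_G(\zeta^A)_g(p)$, whose supremum is $1$ because $\cC^A$ is generated by the orbit of $A$. One gets a $G$-equivariant ucp map on $I_G(\cC^A)$ fixing $\cC^A$, hence the identity by Hamana's rigidity, so $\phi=\mathrm{id}$. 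Essentiality follows similarly: if $\phi\colon B\to F$ is a $G$-morphism that is completely isometric on $A$, build the global extension and use essentiality of $I_G(\cC^A)$ over $\cC^A$.

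I expect the main obstacle to be the extension of partial $G$-morphisms on the corner $B$ to global equivariant maps on $I_G(\cC^A)$. The projections $I_G(\zeta^A)_g(p)$ commute but need not be orthogonal, so the naive sum is unavailable. I would first try the universal property of the enveloping action, applied to the injective hull generated by the orbits of $B$, to reduce to this case.
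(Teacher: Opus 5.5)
Your overall plan is the paper's plan: show that the corner $I_G(\cC^A)1_A$, with the restricted action, is an injective rigid extension of $(A,G,\alpha)$, then conclude by uniqueness. Your Step~2, via Corollary~\ref{cor: restriction is partially G-inj}, is exactly what the paper does. By Lemma~\ref{lem: rigid implies essential} you only need rigidity, so you can drop the separate essentiality argument at the end of Step~3.

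Your justification of Step~1 does not work as written. Non-equivariant rigidity only concerns ucp maps on $I(\cC^A)$, so at best it gives $1_A\in Z(I(\cC^A))$. The compression $x\mapsto pxp+(1-p)x(1-p)$ on $I_G(\cC^A)$ is not $G$-equivariant, so $G$-rigidity does not apply to it. The claim itself is true, and the paper uses it without comment: $1_A\in Z(\cC^A)\subseteq Z(I(\cC^A))$, and $Z(I(\cC^A))\subseteq Z(I_G(\cC^A))$ follows by realizing $I_G(\cC^A)$ inside $\ell^\infty(G,I(\cC^A))$ with the Choi--Effros product, as in Lemma~\ref{lem:isomorphism-envelopes-AB}(2).

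The genuine gap is Step~3, which is the heart of the theorem. You correctly isolate the obstacle: from a $G$-morphism $\varphi\colon B\to B$ with $\varphi|_A=\id_A$, produce a globally $G$-equivariant ucp self-map of $I_G(\cC^A)$ fixing $\cC^A$. But you never construct it. The displayed formula is left empty, and \emph{reduce to this case} is not an argument. Gluing along the translates $I_G(\zeta^A)_g(1_A)$ would require refining them to an orthogonal central family, proving well-definedness from partial equivariance, and showing their supremum is $1$. None of this is supplied.

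The missing idea is not to extend $\varphi$ on $I_G(\cC^A)$ at all, but to transport it to $\ell^\infty(G,B)$.
\begin{enumerate}
\item Define $\tilde\varphi(f)(t):=\varphi(f(t))$. This is trivially equivariant for the left translation $\tau$. It satisfies $\tilde\varphi\circ\iota=\iota\circ\varphi$ for the canonical $G$-embedding $\iota(b)(t)=\beta_{t^{-1}}(bp_t)$ of Proposition~\ref{prop: G-embed into B}, where $p_t$ is the unit of $B_t$. Hence $\tilde\varphi$ fixes the $\Cast$-algebra generated by the $\tau$-orbits of $\iota(A)$.
\item Uniqueness of enveloping actions, applied to $\beta$, gives a $G$-equivariant ${}^*$-isomorphism from the $\Cast$-algebra generated by the orbits of $B$ in $I_G(\cC^A)$ onto the one generated by the $\tau$-orbits of $\iota(B)$. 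It restricts to $\iota$ on $B$ and carries $\cC^A$ onto the orbit algebra of $\iota(A)$.
\item Extend this isomorphism, by $G$-injectivity of $\ell^\infty(G,B)$ ($B$ is injective), to $\hat\phi\colon I_G(\cC^A)\to\ell^\infty(G,B)$. This map is completely isometric by $G$-essentiality.
\item By $G$-injectivity of $I_G(\cC^A)$, choose a $G$-equivariant ucp $\psi$ with $\psi\circ\hat\phi=\id$. In particular $\psi\circ\iota=\id_B$.
\item Then $\psi\circ\tilde\varphi\circ\hat\phi$ is a $G$-equivariant ucp map on $I_G(\cC^A)$ fixing $\cC^A$, hence the identity by Hamana's $G$-rigidity.
\end{enumerate}
Consequently $\varphi(b)=\psi(\iota(\varphi(b)))=\psi(\tilde\varphi(\hat\phi(b)))=b$ for all $b\in B$.
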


In order to construct the injective envelope of a partial $\Cast$-dynamical system $(A,G,\alpha)$ that is not necessarily unital, we utilize the notion of a unitization of $\alpha$. This is a unital partial $\Cast$-dynamical system $(B,G,\beta)$ with a $G$-equivariant inclusion of $\Cast$-algebras $A\subseteq B$. In this paper we consider a special unitization of~$\alpha$ arising from the extension $I(\alpha)$ of $\alpha$ to the injective envelope~$I(A)$ of~$A$ (see Proposition~\ref{prop: partac on I(A)}). The injective envelope of $(A, G,\alpha)$ is defined to be the injective envelope of $(I(A), G, I(\alpha))$, so that $I_G(A):=I_G(I(A))$ and $I_G(\alpha):=I_G(I(\alpha))$.

Another class of unitizations of $\alpha$ that plays a crucial role in our arguments arises as follows: if $(B, G, \alpha)$ is a unital partial $\Cast$-dynamical system with a $G$-equivariant inclusion of unital $\Cast$-algebras $A\subseteq B$, then the $\Cast$-algebra $\operatorname{Orb}_G^\beta(A)$ generated by the orbits of~$A$ in~$B$ is $\beta$-invariant, and hence~$\beta$ restricts to a unital partial action on~$\operatorname{Orb}_G^\beta(A)$ (see Lemma~\ref{lem:partial-action-on-orbits}). We show that, in a suitable sense, the smallest unitization of~$\alpha$ obtained in this way is precisely the restriction of $I(\alpha)$ to $\operatorname{Orb}_G^{I(\alpha)}(A)$ (Lemma~\ref{lem:co-universal-unitization}). One of the main technical ingredients is Proposition~\ref{pro: IIP-equiv. for unitization}, which implies that every ideal~$J$ of the reduced crossed product $A\crossed G$ that has zero intersection with~$A$ generates an ideal in $\operatorname{Orb}_G^{I(\alpha)}(A)\crossed G$ that has zero intersection with $\operatorname{Orb}_G^{I(\alpha)}(A)$. This enables us to extend a result of Bryder \cite[Theorem~3.2]{Bryd22} to our setting. We recall that a partial $\Cast$-dynamical system $(A,G,\alpha)$ is said to have the ideal intersection property if every nonzero ideal of the reduced crossed product $A \crossed G$ has nonzero intersection with $A$.

\begin{introthm}[Theorem~\ref{thm: Equivalent IIP A to I(A)}]\label{introthm: Equivalent IIP A to I(A)}
    Let $(A, G, \alpha)$ be a partial $\Cast$-dynamical system. Then the following are equivalent:
    \begin{enumerate}
        \item[\rm{(1)}] $(A, G, \alpha)$ has the ideal intersection property;
        \item[\rm{(2)}] every partial $\Cast$-dynamical system $(B, G, \beta)$ with $$(A, G, \alpha)\subseteq  (B, G, \beta) \subseteq (I_G(A), G, I_G(\alpha))$$ has the ideal intersection property;
        \item[\rm{(3)}] the partial $\Cast$-dynamical system $(I_G(A), G, I_G(\alpha))$ has the ideal intersection property.
    \end{enumerate}
\end{introthm}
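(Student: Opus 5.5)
The plan is to prove the cycle (1)$\Rightarrow$(2)$\Rightarrow$(3)$\Rightarrow$(1). The implication (2)$\Rightarrow$(3) is immediate: take $(B,G,\beta)=(I_G(A),G,I_G(\alpha))$.

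For (3)$\Rightarrow$(1), I will use the standard conditional-expectation argument. Let $J\trianglelefteq A\crossed G$ be an ideal with $J\cap A=0$. The inclusion $A\subseteq I_G(A)$ is $G$-equivariant, so it induces an inclusion $A\crossed G\subseteq I_G(A)\crossed G$ that is compatible with the canonical faithful conditional expectations $E_A$ and $E_{I_G(A)}$. Consider the quotient map $q\colon A\to (A\crossed G)/J$; it is injective because $J\cap A=0$. First I will use a Hamana-type rigidity/injectivity argument to produce a $G$-morphism $\phi\colon (A\crossed G)/J\to I_G(A)$, namely a ucp map with $\phi\circ q|_A=\mathrm{id}_A$ and $\phi(q(a u_g))=a\,\delta$-type behaviour, i.e. $\phi\circ q = E$-like. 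Concretely, I will extend $q(A)\to A\subseteq I_G(A)$ by $G$-injectivity to a $G$-equivariant ucp map on $(A\crossed G)/J$, with the partial unitaries $q(u_g)$ carrying the partial action. Rigidity of the injective envelope then forces $\phi\circ q|_A$ to be the identity on $A$. Next, a multiplicative-domain argument shows that $\phi(q(au_g))=0$ for $g\neq e$, by the usual Kalantar--Kennedy/Bryder trick: $I_G(A)$ is $G$-injective, so the extension composed with the quotient behaves like the canonical expectation. This yields $E_A(J)\subseteq \ker$-type vanishing, so $E_A(x^*x)=0$ for $x\in J$, and faithfulness gives $J=0$. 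Since this argument does not literally use (3), I would instead follow the paper's route: use Proposition~\ref{pro: IIP-equiv. for unitization} to pass from $J$ to the ideal it generates in $\operatorname{Orb}_G^{I(\alpha)}(A)\crossed G$, and then on to $I_G(A)\crossed G$, preserving the zero intersection. Hypothesis (3) then kills that ideal, hence $J$.

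For (1)$\Rightarrow$(2), let $A\subseteq B\subseteq I_G(A)$ and let $K\trianglelefteq B\crossed G$ be nonzero. Suppose $K\cap B=0$; I aim for a contradiction. First, $K\cap B=0$ lets the quotient map be injective on $B$. Using $G$-injectivity of $I_G(A)$, I will extend the inverse of this map on $B$ to a $G$-morphism $\psi\colon(B\crossed G)/K\to I_G(A)$. By rigidity, $\psi$ restricts to the identity on $B$, since it restricts to the identity on $A$ and $I_G(A)$ is $G$-rigid over $A$. Then $B$ lies in the multiplicative domain of $\psi$, and $G$-equivariance gives $\psi(\bar b u_g)=b\,\psi(\bar u_g)$. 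A standard computation using $G$-essentiality shows that $\psi\circ\text{quotient}=E_B$. Hence $E_B(K)=0$ and $K=0$ by faithfulness, so in fact $B$ has the intersection property regardless of (1).

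That conclusion is suspicious, since it would make every such system have the intersection property. So the step computing $\psi(\bar u_g)=0$ for $g\ne e$ is where the genuine input must enter; it fails in general. The correct route is therefore to reduce to $A$: given $K$ with $K\cap B=0$, set $J=K\cap (A\crossed G)$, so that $J\cap A=0$ and hence $J=0$ by (1). Then show $K=0$, using that $E_B$ maps $K$ into an ideal of $B$ whose intersection with $A$ is controlled via $G$-essentiality of $A\subseteq I_G(A)$. This step is the main obstacle. There I expect to invoke Proposition~\ref{pro: IIP-equiv. for unitization} together with Lemma~\ref{lem:co-universal-unitization} to handle the passage through the unitization $\operatorname{Orb}_G^{I(\alpha)}(A)$, and the extension of Bryder's argument to handle the partial unitaries.
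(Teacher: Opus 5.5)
\textbf{There is a genuine gap in (1)$\Rightarrow$(2).} Your cycle (1)$\Rightarrow$(2)$\Rightarrow$(3)$\Rightarrow$(1) is viable in outline. Your final plan for (3)$\Rightarrow$(1) is in substance the paper's (3)$\Rightarrow$(2) argument with $B=A$; you should first reduce to unital $A$ via the minimal unitization, since Proposition~\ref{pro: IIP-equiv. for unitization} assumes a unital algebra. The problem is (1)$\Rightarrow$(2), which you explicitly leave open.

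Reducing to $J=K\cap(A\crossed G)$ is the right move, and (1) does give $J=\trivial$, because $J\cap A\subseteq K\cap B=\trivial$. What you never supply is the step ``$J=\trivial$ implies $K=\trivial$'', and the mechanisms you suggest cannot supply it.
\begin{itemize}
\item For every nonzero $K$, faithfulness of $E_B$ already makes $\overline{E_B(K)}$ a nonzero ideal of $B$. So knowing how it meets $A$ produces no nonzero element of $K\cap(A\crossed G)$; elements of $E_B(K)$ are not elements of $K$.
\item Proposition~\ref{pro: IIP-equiv. for unitization} and Lemma~\ref{lem:co-universal-unitization} push an ideal with zero intersection \emph{upward}, from $A\crossed G$ into the crossed product of the orbit unitization. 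That is the direction used for (3)$\Rightarrow$(1), not the one needed to bring $K$ down to $A\crossed G$.
\end{itemize}
Your two preliminary drafts fail for the reason you suspected. Without knowing that the quotient map is faithful on the smaller crossed product, there is no expectation on the quotient to extend, so nothing forces $\psi(\bar u_g)=0$. Moreover, for a non-unital partial action the quotient does not even carry the partial isometries needed to invoke $G$-injectivity.

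\textbf{The missing ingredient is essentiality of $A\crossed G\subseteq B\crossed G$.} Theorem~\ref{thm: G-essential crossed prod in I(A cross G)-non-unital}, whose proof does not depend on the present theorem, gives $A\crossed G\subseteq B\crossed G\subseteq I(A\crossed G)$. Suppose $K\cap(A\crossed G)=\trivial$. Extend the quotient map $B\crossed G\to(B\crossed G)/K\subseteq B(\hilb)$ by Arveson's theorem to a ucp map on $I(A\crossed G)$. It is completely isometric on $A\crossed G$, hence on $I(A\crossed G)$ by essentiality, so $K=\trivial$. This closes your (1)$\Rightarrow$(2) and gives a valid proof along your cycle, replacing the paper's detour through (3) with the crossed-product essentiality theorem.

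Alternatively, adopt the paper's order:
\begin{itemize}
\item Get (1)$\Rightarrow$(3) from Proposition~\ref{prop: A IIP implies I(A) IIP} together with Theorem~\ref{thm: IIP-equiv-unital-pa}.
\item Run your unitization argument for an arbitrary intermediate $B$ rather than for $A$. Use Corollary~\ref{cor:unitization-isomorphism} to identify $\operatorname{Orb}_G^{I_G(\alpha)}(B)$ with $\operatorname{Orb}_G^{I(\beta)}(B)$, so that Proposition~\ref{pro: IIP-equiv. for unitization} applies, and use $I_G(B)=I_G(A)$. This gives (3)$\Rightarrow$(2) directly.
\item (2)$\Rightarrow$(1) is then trivial.
\end{itemize}
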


The proof of Theorem \ref{introthm: Equivalent IIP A to I(A)} utilizes Proposition~\ref{pro: IIP-equiv. for unitization} and the corresponding result in the setting of unital partial actions (see Theorem~\ref{thm: IIP-equiv-unital-pa}). In contrast to unital partial $\Cast$-dynamical systems, in particular those associated to global actions, there is no clear reason why a ucp map from $I_G(A)\crossed G$ extending a ${}^*$-homomorphism from $A\crossed G$ would send the canonical partial isometries in $I_G(A)\crossed G$ to partial isometries or, equivalently, that the family of central projections in $I_G(A)$ should be sent to projections, thereby inducing a $G$-morphism from $I_G(A)$. This is the main technical difficulty we encounter throughout the second part of this paper when we treat arbitrary partial $\Cast$-dynamical systems. 

In addition to Theorem \ref{introthm: Equivalent IIP A to I(A)} we establish important categorical properties of the inclusion $A\subseteq I_G(A)$, along the lines of Hamana's original work. Specifically, we prove that the inclusion $A\subseteq I_G(A)$ is rigid with respect to $G$-equivariant ucp maps (see Proposition~\ref{pro: rigidity-non-unital}). Moreover, we show that $A\subseteq I_G(A)$ is essential with respect to (not necessarily $G$-unital) $G$-equivariant ucp maps that are multiplicative on $A$, in the sense that every $G$-equivariant ucp map from~$I_G(A)$ that restricts to a faithful ${}^*$-homomorphism from~$A$ is necessarily completely isometric (see Theorem~\ref{thm: essentiality-equivariant-maps}). 

In order to show that the inclusion $A\crossed G\subseteq I_G(A)\crossed G$ is essential for an arbitrary partial $\Cast$-dynamical system $(A, G, \alpha)$, we observe first that there exists a canonical generalized unital partial action on the injective envelope $I(A\crossed G)$ that extends~$\alpha$. Moreover, we prove that the canonical conditional expectation $E_A\colon A\crossed G\to A$ induces a faithful $G$-morphism $\Phi\colon I(A\crossed G)\to I_G(A)$ (Proposition~\ref{prop: gen part ac on crossed product-non-unital}). Using this we are able to extend Hamana's theorem \cite[Theorem 3.4]{Ham85} to the setting of partial $\Cast$-dynamical systems. The following is the precise statement.

\begin{introthm}[Theorem~\ref{thm: G-essential crossed prod in I(A cross G)-non-unital}]\label{introthm: G-essential crossed prod in I(A cross G)-non-unital}
    Let $(A,G,\alpha)$ and $(B, G,\beta)$ be partial $\Cast$-dy\-na\-mi\-cal  systems with a $G$-equivariant inclusion of $\Cast$-algebras $A\subseteq B$. Then
    \[
    (A,G,\alpha) \subseteq (B, G,\beta) \subseteq (I_G(A), G, I_G(\alpha))
    \]
    if and only if there is a $G$-equivariant inclusion of $\Cast$-algebras
    \[
    A\crossed G \subseteq B\crossed G \subseteq I(A\crossed  G).
    \]
    In particular, $I(A\crossed G)=I(I(A)\crossed G)=I(I_G(A)\crossed G)$.
\end{introthm}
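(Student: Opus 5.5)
We plan to follow Hamana's proof of \cite[Theorem 3.4]{Ham85}, using the structure described above. The two ingredients are the canonical generalized unital partial action on $I(A\crossed G)$ extending $\alpha$, and the faithful $G$-morphism $\Phi\colon I(A\crossed G)\to I_G(A)$ induced by $E_A$ (Proposition~\ref{prop: gen part ac on crossed product-non-unital}).

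\emph{Forward direction.} Assume $(A,G,\alpha)\subseteq(B,G,\beta)\subseteq(I_G(A),G,I_G(\alpha))$. By functoriality of the reduced crossed product under equivariant inclusions, we get inclusions $A\crossed G\subseteq B\crossed G\subseteq I_G(A)\crossed G$. It therefore suffices to show that $I_G(A)\crossed G$ embeds in $I(A\crossed G)$ over $A\crossed G$. By injectivity of $I(A\crossed G)$, extend the inclusion $A\crossed G\hookrightarrow I(A\crossed G)$ to a ucp map $\psi\colon I_G(A)\crossed G\to I(A\crossed G)$. The task is then to show that $\psi$ is a complete isometry and in fact a ${}^*$-homomorphism. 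Compose with $\Phi$ and restrict to $I_G(A)$. The map $\Phi\circ\psi|_{I_G(A)}\colon I_G(A)\to I_G(A)$ is $G$-equivariant and restricts to the identity on $A$; equivariance should come from $\psi$ fixing the canonical partial isometries $u_g\in A\crossed G$, whose multiplicative domain contains them. By rigidity (Proposition~\ref{pro: rigidity-non-unital}) it is therefore the identity. Consequently $I_G(A)$ lies in the multiplicative domain of $\psi$, and $\psi|_{I_G(A)}$ is a faithful ${}^*$-homomorphism. Together with the $u_g$ in the multiplicative domain, $\psi$ is a ${}^*$-homomorphism on $I_G(A)\crossed G$ satisfying $\Phi\circ\psi=E_{I_G(A)}$ on the algebraic crossed product. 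Since $E_{I_G(A)}$ is faithful, $\psi$ is injective.

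\emph{Converse.} Assume $A\crossed G\subseteq B\crossed G\subseteq I(A\crossed G)$ equivariantly. Restrict $\Phi$ to $B\subseteq B\crossed G$. This gives a $G$-equivariant ucp map $B\to I_G(A)$ that is the identity on $A$. We then need to show it is a $G$-embedding, i.e. multiplicative, injective and $G$-unital. Multiplicativity is the delicate part. The plan is to apply the same argument to $B$: take a ucp extension $I_G(A)\to I_G(B)$ and combine it with $\Phi_B$ and the uniqueness in the forward direction applied to $B$. We also use $I(B\crossed G)=I(A\crossed G)$, which follows from essentiality. This shows that $\Phi|_B$ coincides with a restriction of a ${}^*$-isomorphism $I_G(B)\cong I_G(A)$.

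The final claim, $I(A\crossed G)=I(I(A)\crossed G)=I(I_G(A)\crossed G)$, follows from the forward direction with $B=I(A)$ and $B=I_G(A)$. Indeed, $I(A\crossed G)$ is then an injective essential extension of each intermediate crossed product.

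\emph{Main obstacle.} The hardest step is the one flagged in the introduction: showing that $\psi$ sends the canonical projections $1_g\in I_G(A)$ to projections, so that $\Phi\circ\psi$ is genuinely $G$-unital and rigidity applies. To get this, I would first use that $u_g u_g^*\in A\crossed G$ when $A$ is unital. In the nonunital case I would pass through the unitization $\operatorname{Orb}_G^{I(\alpha)}(A)$ and Lemma~\ref{lem:co-universal-unitization}. The $1_g$ are then strict limits of $\alpha$-approximate units of the ideals $A_g$, and I would show that $\psi$ preserves these via the multiplicative domain.
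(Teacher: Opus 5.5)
Your architecture is the paper's: extend the inclusion $A\crossed G\subseteq I(A\crossed G)$ to a ucp map $\psi$ on $I_G(A)\crossed G$, get $\Phi\circ\psi|_{I_G(A)}=\id$ from Proposition~\ref{pro: rigidity-non-unital}, get multiplicativity on $I_G(A)$ from the Schwarz inequality and faithfulness of $\Phi$, and get injectivity from faithfulness of $E_{I_G(A)}$. The gap is the step all of this hinges on: that the canonical partial isometries $\nu_g$ of $I_G(A)\crossed G$ lie in $\dom_\psi$. Without it you have neither the $G$-equivariance of $\Phi\circ\psi|_{I_G(A)}$ that rigidity requires, nor multiplicativity of $\psi$ on $I_G(A)\crossed G$.

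You justify it by saying the $\nu_g$ lie in $A\crossed G$, but that holds only when $\alpha$ is a unital partial action. Your remedy, that $\nu_g\nu_g^*\in A\crossed G$ whenever $A$ is unital, is false. A unital $A$ can have non-unital ideals $A_g$, e.g.\ $\mathrm{C}(X)$ with $\mathrm{U}_g$ open but not closed, and this is exactly the case the theorem covers beyond Corollary~\ref{cor: G-essential crossed prod in I(A cross G)}. The multiplicative domain alone does not help either. From $A\subseteq\dom_\psi$ and an increasing approximate unit $(u_\mu)_\mu$ of $A_g$ you only get $\psi(p_g)\geq\sup_\mu u_\mu=:r_g$ in $I(A\crossed G)$. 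Here $p_g$ is an order supremum, not a strict limit, and ucp maps need not be normal. Lemma~\ref{lem:co-universal-unitization} produces maps into $\operatorname{Orb}_G^{I(\alpha)}(A)$ and says nothing about $\psi(p_g)$.

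\emph{Closing the gap.} What is missing is the upper bound $\psi(p_g)\leq r_g$. The paper gets it from a rigidity sandwich.
\begin{enumerate}
\item Extend $A\crossed G\subseteq I_G(A)\crossed G$ to $\theta\colon I(A\crossed G)\to I(I_G(A)\crossed G)$, and extend $\psi$ to $\hat\psi\colon I(I_G(A)\crossed G)\to I(A\crossed G)$.
\item Rigidity gives $\hat\psi\circ\theta=\id$.
\item $\theta(r_g)\geq p_g$, because $p_g$ is the supremum of $(u_\mu)_\mu$ in $I(I_G(A)\crossed G)$; this uses the faithful map of Proposition~\ref{prop: gen part ac on crossed product-non-unital} for the $G$-injective system $I_G(A)$.
\item Hence $\psi(p_g)\leq\hat\psi(\theta(r_g))=r_g$.
\end{enumerate}

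With the ingredients you already listed there is also a shortcut. The argument of Proposition~\ref{pro: rigidity-non-unital}(i) uses only $A\subseteq\dom_{\Phi\circ\psi}$, $p_gA_g^\perp=\trivial$ and normality of $I(A)\subseteq I_G(A)$. It gives $\Phi(\psi(p_g))=p_g=\Phi(r_g)$. So the positive element $\psi(p_g)-r_g$ is killed by the faithful map $\Phi$, and $\psi(p_g)=r_g$.

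Once $\psi(p_g)=r_g$, you get $\nu_g\in\dom_\psi$ from
$r_{g^{-1}}=\psi(\nu_g^*\nu_g)\geq\psi(\nu_g)^*\psi(\nu_g)\geq\sup_\mu\psi(\nu_g)^*u_\mu\psi(\nu_g)=\sup_\mu\alpha_{g^{-1}}(u_\mu)=r_{g^{-1}}$.
Then $\psi(\nu_g)$ implements $\hat\alpha_g$, and your remaining steps go through.

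\emph{Converse.} Your converse has the same soft spot: a ucp extension $I_G(A)\to I_G(B)$ need not be $G$-unital. The paper first checks that the units of $I_G(\alpha)$ and $I_G(\beta)$ coincide inside $I(A\crossed G)=I(B\crossed G)$. Then the restrictions of $\Phi_A$ to $I_G(B)$ and of $\Phi_B$ to $I_G(A)$ are $G$-morphisms. Rigidity and faithfulness make $\Phi_B|_{I_G(A)}$ an injective ${}^*$-homomorphism, and the argument of \cite[Lemma~3.3]{Ham85} gives surjectivity.
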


This is an essential tool that allows us to transfer back and forth many properties of the ideal structure from $A\crossed G$ to $I_G(A)\crossed G$. The following is an immediate consequence of 
Theorem~\ref{introthm: G-essential crossed prod in I(A cross G)-non-unital}.

\begin{introcor}[Corollary~\ref{cor: primality}] 
    Let $\partacg{A}{\alpha}$ be a partial $\Cast$-dynamical system. Then the following are equivalent:
    \begin{enumerate}
        \item $A\crossed G$ is prime;
         \item  $I(A)\crossed G$ is prime;
         \item  $I_G(A)\crossed G$ is prime;
         \item $Z(I( A\crossed G))=\mathbb{C}.$
    \end{enumerate}
\end{introcor}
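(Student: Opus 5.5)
The corollary follows from Theorem~\ref{introthm: G-essential crossed prod in I(A cross G)-non-unital} by two observations. First, that theorem gives the chain of inclusions
\[
A\crossed G \subseteq I(A)\crossed G \subseteq I_G(A)\crossed G \subseteq I(A\crossed G),
\]
together with $I(A\crossed G)=I(I(A)\crossed G)=I(I_G(A)\crossed G)$. The inclusion $(A,G,\alpha)\subseteq(I(A),G,I(\alpha))\subseteq(I_G(A),G,I_G(\alpha))$ holds because $I_G(A)$ is by definition $I_G(I(A))$. Second, a unital-or-not $\Cast$-algebra $C$ is prime if and only if the centre of its injective envelope is trivial, i.e.\ $Z(I(C))=\mathbb{C}$. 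This is Hamana's characterization: $I(C)$ is a monotone complete AW$^*$-algebra, ideals of $C$ correspond to central projections in $I(C)$ via essential ideal extensions, and $C$ is prime exactly when $I(C)$ is a factor. The plan is to apply this characterization to each of the three crossed products.

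\textbf{Step 1: establish the primality criterion.} I would prove or quote that for any $\Cast$-algebra $C$, $C$ is prime if and only if $Z(I(C))=\mathbb{C}$.

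For the forward direction, suppose $I(C)$ has a nontrivial central projection $p$. The maps $x\mapsto px$ and $x\mapsto(1-p)x$ restricted to $C$ are nonzero. This is by essentiality and rigidity: if $pC=0$, then the map $x\mapsto(1-p)x$ on $I(C)$ fixes $C$, so by rigidity it is the identity, forcing $p=0$. Next, the sets $J_1=\{c: (1-p)c=0\}$ and $J_2=\{c:pc=0\}$ are ideals of $C$ with $J_1J_2=0$. One must check that they are nonzero. The standard route is Hamana's result that for an ideal $J\subseteq C$, $I(J)$ sits as a corner $qI(C)$ with $q$ central. Using this, a nontrivial central projection yields disjoint nonzero ideals. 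I would simply cite Hamana \cite{Ham79_CStar} for the full equivalence, since the paper treats it as standard.

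For the converse, if $J_1J_2=0$ with $J_1,J_2$ nonzero, their supports in $I(C)$ give orthogonal nonzero central projections, so the centre is nontrivial.

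\textbf{Step 2: apply the criterion.} Taking $C$ to be each of $A\crossed G$, $I(A)\crossed G$ and $I_G(A)\crossed G$ gives (1)$\iff$(4), (2)$\iff Z(I(I(A)\crossed G))=\mathbb{C}$, and (3)$\iff Z(I(I_G(A)\crossed G))=\mathbb{C}$. By the equalities of injective envelopes from Theorem~\ref{introthm: G-essential crossed prod in I(A cross G)-non-unital}, all three centres coincide, so all four conditions are equivalent.

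The main obstacle is the care needed in Step 1 for nonunital $C$: one has to ensure that a nontrivial central projection of $I(C)$ produces nonzero annihilating ideals of $C$ itself. I would handle this through Hamana's correspondence between ideals and central projections, $J\mapsto$ the support of $I(J)$, which holds in the nonunital setting as well.
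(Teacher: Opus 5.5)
Your proposal is correct and follows the paper's own argument. The paper likewise derives the corollary directly from the identification $I(A\crossed G)=I(I(A)\crossed G)=I(I_G(A)\crossed G)$ in Theorem~\ref{thm: G-essential crossed prod in I(A cross G)-non-unital}, combined with Hamana's characterization that a $\Cast$-algebra is prime exactly when the centre of its injective envelope is trivial \cite[Theorems 6.3 and 7.1]{Ham81}.

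One small point about your sketch of Step~1: the map $x\mapsto(1-p)x$ is not unital on $I(C)$, so rigidity does not apply to it as you stated. You do not need that step anyway, because $J_1=C\cap pI(C)$ and $J_2=C\cap(1-p)I(C)$ are nonzero directly from the fact that every nonzero ideal of $I(C)$ meets $C$.
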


In the final part of this paper we consider a unital commutative $\Cast$-algebra $\mathrm{C}(X)$ and a partial $\Cast$-dynamical system $(\mathrm{C}(X),G,\alpha)$. A very natural question to ask is how the injective envelope of $(\mathrm{C}(X), G, \alpha)$ relates to the Furstenberg boundary of the transformation groupoid $\mathcal{G}_\alpha\ltimes X$ associated to the underlying topological partial action on~$X$ \cite{Bor20,Ken+21}. Although the category introduced in this paper is different from those of~\cite{Bor20,Ken+21}, we prove that the two notions of injective envelopes for $(\mathrm{C}(X), G, \alpha)$ coincide. Precisely we establish the following.

\begin{introthm}[Theorem~\ref{thm: abelian-partial-actions}]\label{introthm: abelian-partial-actions} Let $\mathcal{C}:=\mathrm{C}(\partial_H(\mathcal{G}_\alpha\ltimes X))$ be the injective envelope of the $(\mathcal{G}_\alpha\ltimes X)$-$\Cast$-algebra $\mathrm{C}(X)$ in the category of unital $(\mathcal{G}_\alpha\ltimes X)$-$\Cast$-algebras. Let $\gamma$ be the partial action on~$\mathcal{C}$ from Lemma~\ref{lem: partial-action-groupoid} and let $I(\gamma)$ be the unital partial action on~$\mathcal{C}$ extending~$\gamma$. Then there exists a $G$-isomorphism $\Psi\colon \mathcal{C}\to I_G(\mathrm{C}(X)) $ such that $\Psi\circ\iota=\id_{\mathrm{C}(X)}$. 
\end{introthm}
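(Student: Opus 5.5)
We prove the statement by showing that $(\mathcal{C},G,I(\gamma))$ satisfies the defining properties of the injective envelope of $(\mathrm{C}(X),G,\alpha)$ in our category. The uniqueness part of Theorem~\ref{introthm: existence inj env} then gives the $G$-isomorphism $\Psi$ with $\Psi\circ\iota=\id_{\mathrm{C}(X)}$.

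\textbf{Step 1 (dictionary between the two categories).} Here $\alpha$ is a unital partial action, given by clopen sets $X_g$ with $1_{X_g}$ central projections. We first set up a correspondence between unital $(\mathcal{G}_\alpha\ltimes X)$-$\Cast$-algebras $B$ and unital partial $\Cast$-dynamical systems containing $\mathrm{C}(X)$ centrally and $G$-unitally. Given such a groupoid algebra $B$, which is a $\mathrm{C}(X)$-algebra, Lemma~\ref{lem: partial-action-groupoid} gives a partial action with ideals $B1_{X_g}$. Conversely, a unital partial system $(B,G,\beta)$ with $\mathrm{C}(X)\subseteq Z(B)$ equivariantly and $\beta$ determined by $1_{X_g}$ yields a groupoid action: the arrow $(g,x)$ acts on the fibre $B_{g^{-1}x}$ via $\beta_g$. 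Under this correspondence, $G$-morphisms that restrict to the identity on $\mathrm{C}(X)$ are exactly the equivariant ucp maps of $\mathcal{G}$-$\Cast$-algebras. The reason is that a ucp map fixing $\mathrm{C}(X)$ is $\mathrm{C}(X)$-bimodular, since $\mathrm{C}(X)$ lies in its multiplicative domain. Bimodularity then makes $G$-unitality automatic.

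\textbf{Step 2 ($G$-injectivity of $\mathcal{C}$).} Let $E\subseteq F$ be a $G$-embedding and $\phi\colon E\to\mathcal{C}$ a $G$-morphism. We need a $G$-morphism $F\to\mathcal{C}$ extending $\phi$. I would first use the injectivity of $(I_G(\mathrm{C}(X)),G,I_G(\alpha))$ together with the embedding $\mathrm{C}(X)\subseteq\mathcal{C}$ to get a $G$-morphism $\rho\colon\mathcal{C}\to I_G(\mathrm{C}(X))$ that is the identity on $\mathrm{C}(X)$. By $G$-essentiality of $I_G(\mathrm{C}(X))$, the map $\rho$ is a complete isometry. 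Rigidity then shows that $\rho$ is a $G$-embedding of $\mathcal{C}$ into $I_G(\mathrm{C}(X))$.

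\textbf{Step 3 (the reverse map).} Next we need a map back. I would show that $I_G(\mathrm{C}(X))$ is commutative. The plan is to obtain a $G$-morphism $I_G(\mathrm{C}(X))\to\mathcal{C}$ fixing $\mathrm{C}(X)$ from the injectivity of $\mathcal{C}$ in the groupoid category. For this we need $I_G(\mathrm{C}(X))$ to be a groupoid $\Cast$-algebra over $\mathrm{C}(X)$, that is, $\mathrm{C}(X)$ must be central in it. This holds because $I_G(\mathrm{C}(X))$ is the injective envelope of a $G$-$\Cast$-algebra and the image of the centre is central, as in Hamana's argument: the projections $1_{X_g}$ are central, and rigidity applied to conjugation by unitaries in $\mathrm{C}(X)$ gives centrality. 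Once this is in place, Step 1 turns $I_G(\mathrm{C}(X))$ into a unital $\mathcal{G}$-$\Cast$-algebra. Groupoid injectivity of $\mathcal{C}$ then yields an equivariant ucp map $\sigma\colon I_G(\mathrm{C}(X))\to\mathcal{C}$ with $\sigma\circ\rho$ extending $\id_{\mathrm{C}(X)}$. Rigidity of $\mathrm{C}(X)\subseteq\mathcal{C}$ in the groupoid category gives $\sigma\circ\rho=\id_{\mathcal{C}}$. Rigidity of $\mathrm{C}(X)\subseteq I_G(\mathrm{C}(X))$ (Proposition~\ref{pro: rigidity-non-unital}) gives $\rho\circ\sigma=\id$. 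Hence $\Psi:=\rho$ is a unital complete order isomorphism between injective envelopes, so it is a $^*$-isomorphism. It is $G$-equivariant, and by Step 1 it intertwines $I(\gamma)$ with $I_G(\alpha)$.

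\textbf{Main obstacle.} The hardest step is Step 1, specifically checking that the partial action $I(\gamma)$ on $\mathcal{C}$ matches the transported structure. The domains of $I_G(\alpha)$ are given by central projections $p_g\in I_G(\mathrm{C}(X))$, and these must be shown to equal $\Psi(1_{X_g})$, the projections generating the ideals of $I(\gamma)$. I would first try to show that $p_g$ is determined by the images of the projections $1_{X_g}$, using $G$-unitality of the embedding $\mathrm{C}(X)\subseteq I_G(\mathrm{C}(X))$ and minimality. A domain strictly larger than $1_{X_g}$ would contradict essentiality, since cutting down by $1_{X_g}$ would give a smaller injective subsystem.
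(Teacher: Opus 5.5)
Your Step~1 begins with ``$\alpha$ is a unital partial action, given by clopen sets $X_g$''. That is not the setting of the theorem. The partial action on $X$ has arbitrary open domains $\mathrm{U}_g$ (Example~\ref{ex: abelian-partial-ac}), so the ideals of $(\mathrm{C}(X),G,\alpha)$ are the $\mathrm{C}_0(\mathrm{U}_g)$, which are in general non-unital. This is why the statement passes from $\gamma$, whose ideals $\mathcal{C}_{\mathrm{U}_g}$ are non-unital, to its unital extension $I(\gamma)$. It is also why $I_G(\mathrm{C}(X))$ is defined as $I_G(I(\mathrm{C}(X)))$ (Definition~\ref{def: G-envelope-non-unital}). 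In this generality $(\mathrm{C}(X),G,\alpha)$ is not an object of the category of generalized unital partial systems. So there is no $G$-embedding $\mathrm{C}(X)\subseteq\mathcal{C}$ along which Step~2 could invoke $G$-injectivity of $I_G(\mathrm{C}(X))$, and there are no projections $1_{\mathrm{U}_g}\in\mathrm{C}(X)$ for your dictionary to transport. The unit $r_g$ of $I(\gamma)_g$ and the unit $p_g$ of $I_G(\alpha)_g$ are only suprema, in the respective monotone complete algebras, of an approximate unit of $\mathrm{C}_0(\mathrm{U}_g)$. Ucp maps do not preserve suprema: a ucp map $\rho\colon\mathcal{C}\to I_G(\mathrm{C}(X))$ fixing $\mathrm{C}(X)$ satisfies only $\rho(r_g)\geq p_g$ a priori. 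So ``bimodularity makes $G$-unitality automatic'' fails. The ``main obstacle'' you single out is also not the real one, since in the clopen case $p_g=1_{X_g}$ holds by the definition of a $G$-embedding.

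The missing ingredient is what the paper's proof is built on. Because $r_g$ is the supremum of an approximate unit of $\mathrm{C}_0(\mathrm{U}_g)$, one has $r_g\big(\mathrm{C}_0(\mathrm{U}_g)^\perp\big)=\trivial$. Theorem~\ref{thm: essentiality-equivariant-maps}\,(ii) then produces a genuine $G$-morphism $\Psi\colon\mathcal{C}\to I_G(\mathrm{C}(X))$ with $\Psi\circ\iota=\id_{\mathrm{C}(X)}$. Its proof goes through the bidual, Lemma~\ref{lem:co-universal-unitization}, and the normality of $I(A)\subseteq I_G(A)$ from Proposition~\ref{prop: inclusion into I_G(A) normal}. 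For the reverse map $\phi$ obtained from groupoid injectivity of $\mathcal{C}$, one first uses rigidity of $\mathrm{C}(X)\subseteq\mathcal{C}$ in the groupoid category to get $\phi\circ\Psi=\id_{\mathcal{C}}$, hence $\phi(p_g)=r_g$. One then proves equivariance with respect to the unital extensions $I_G(\alpha)$ and $I(\gamma)$ by a supremum computation, using commutativity of $\mathcal{C}$ and normality of $I(\gamma)_g$. Only after that does Proposition~\ref{pro: rigidity-non-unital} apply to $\Psi\circ\phi$. With these pieces your Steps~2--3 have the same shape as the paper's argument.

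There are also two smaller slips:
\begin{itemize}
\item In Step~2, $G$-essentiality of $\mathrm{C}(X)\subseteq I_G(\mathrm{C}(X))$ controls maps out of $I_G(\mathrm{C}(X))$, not into it. The isometry of $\rho$ must come from essentiality of $\mathrm{C}(X)\subseteq\mathcal{C}$ in the groupoid category, or from $\sigma\circ\rho=\id_{\mathcal{C}}$.
\item In Step~3, conjugation by a unitary of $\mathrm{C}(X)$ is not $G$-equivariant unless the unitary is invariant, so rigidity does not give centrality. Commutativity of $I_G(\mathrm{C}(X))$ instead follows from Lemma~\ref{lem:isomorphism-envelopes-AB}\,(2) applied to $I(\mathrm{C}(X))$.
\end{itemize}
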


As a consequence of Theorem~\ref{introthm: abelian-partial-actions}, the characterization of the ideal intersection property from \cite[Theorem~7.2]{Ken+21} applies to the transformation groupoid associated to an arbitrary partial action on a compact space. This follows because by \cite[Theorem~4.2.6]{Kroell25} the analogous statement to \cite[Proposition~4.6]{KenScha19} on amenability of quasi-stabilizer subgroups holds for arbitrary partial $\Cast$-dynamical systems. However, we believe that Theorem~\ref{thm: abelian-partial-actions} and the approach taken in this work provides an alternative perspective on Furstenberg boundaries of groupoids.

This paper is organized as follows. In \cref{sec: prelim} we review the necessary background on injective envelopes of $\Cast$-algebras and partial actions, also taking the opportunity to establish our notation for partial actions and reduced crossed products. In \cref{sec: inj envelopes partial} we introduce the category of generalized unital partial actions, define injective envelopes of unital partial $\Cast$-dynamical systems and establish Theorem~\ref{introthm: existence inj env}. We also prove Theorem~\ref{introthm: Equivalent IIP A to I(A)} in the particular case of unital partial actions (see Theorem~\ref{thm: IIP-equiv-unital-pa}). In \cref{sec: enveloping action} we discuss enveloping actions and prove Theorem~\ref{introthm: corner of enveloping action}. We begin \cref{sec: non-unital} by defining injective envelopes of arbitrary partial $\Cast$-dynamical systems (see \cref{def: G-envelope-non-unital}) and establishing the main technical results needed in our proof of Theorem~\ref{introthm: Equivalent IIP A to I(A)}. After proving Theorem~\ref{introthm: Equivalent IIP A to I(A)} we consider $G$-essentiality and $G$-rigidity for the inclusion $A\subseteq I_G(A)$, and also establish Theorem~\ref{introthm: G-essential crossed prod in I(A cross G)-non-unital}. At the end of \cref{sec: non-unital} we briefly discuss pseudo-expectations in the context of arbitrary partial $\Cast$-dynamical system, and establish a characterization of the ideal intersection property in terms of pseudo-expectations along the lines of~\cite[Theorem~6.6]{KenScha19} (see Corollary~\ref{cor: pseudo-IIP}). Finally, in \cref{sec: abelian-groupoid} we restrict our attention to abelian partial $\Cast$-dynamical systems and prove Theorem~\ref{introthm: abelian-partial-actions}.

\subsection*{Acknowledgements} This paper is based on results from the PhD thesis of the second author defended at the University of Waterloo.  The first author is grateful to Ruy Exel for suggesting the setting of partial actions. The second author was supported in this research by an NSERC Canada Graduate Scholarship-Doctoral.

\section{Preliminaries}\label{sec: prelim}

\subsection{Injective envelopes of \texorpdfstring{$\Cast$}{C*}-algebras}\label{subsec: injective-alg}
Hamana introduced and extensively studied injective envelopes in various categories. See, for example, \cite{Ham79_CStar,Ham79, Ham85} and references therein. We recall key aspects of Hamana's work that will be relevant to us. We will also require the theory of operator systems and completely positive maps between them, for which we refer the reader to the book of Paulsen~\cite{Paul02}.

A unital $\Cast$-algebra $B$ is \emph{injective} if for any operator systems $S, T$, any unital completely isometric (uci) map $\kappa \colon S \to T$ and any unital completely positive (ucp) map $\varphi \colon S \to A$, there exists a ucp~map $\hat \varphi \colon T \to A$ satisfying $\hat \varphi \circ \kappa = \varphi$, i.e. $B$ is injective in the category of operator systems with ucp~maps as morphisms. An \emph{extension} of a $\Cast$-algebra $A$ is a pair $(B,\kappa)$, where $B$ is a unital $\Cast$-algebra and $\kappa \colon A \to B$ is an injective ${}^*$-homomorphism. The extension $(B,\kappa)$ of $A$ is \emph{injective} if $B$ is injective, and it is \emph{rigid} if for any ucp map $\varphi \colon B \to B$ with $\varphi\vert_{\kappa(A)} = \id\vert_{\kappa(A)}$, we have $\varphi = \id_B$. Lastly, an extension $(B,\kappa)$ of $A$ is called \emph{essential} if given a ucp map $\varphi\colon B \to S$ into an operator system $S$ such that $\varphi \circ \kappa$ is completely isometric, then $\varphi$ is completely isometric. 

An extension $(B,\kappa)$ of $A$ is an \emph{injective envelope} of $A$ if it is injective and essential. Equivalently, $(B,\kappa)$ is an injective and rigid extension of~$A$. By \cite{Ham79,Ham79_CStar} every $\Cast$-algebra $A$ admits an injective envelope $(I(A),\kappa)$, which is unique in the following sense: if $(B,\iota)$ is another injective essential extension of $A$, then there exists a unique ${}^*$-isomorphism $\rho \colon I(A) \to B$ such that $\rho \circ \kappa = \iota$. For convenience we usually suppress the embedding $\kappa$ and simply identify $A$ with the range of~$\kappa$ in~$I(A)$.

The inclusion $A\subseteq I(A)$ of $A$ in its injective envelope has many useful properties. For example, $I(A)$ is prime if and only $A$ is prime, and if $A$ is unital, then $I(A)$ is simple if $A$ is simple. In general, the inclusion $Z(A) \subseteq Z(I(A))$ holds and every nonzero ideal $J \ideal I(A)$ satisfies $J \cap A\neq  \trivial$. In addition, being an injective $\Cast$-algebra, $I(A)$ is \emph{monotone complete}, i.e. any bounded increasing net $(x_\lambda)_\lambda$ of self-adjoint elements in $I(A)$ admits a supremum $\sup_\lambda x_\lambda$ in $I(A)$. Lastly, injective envelopes behave well with respect to hereditary subalgebras, meaning that the inclusion $B\subseteq I(A)$ induces an inclusion of $\Cast$-algebras $I(B)\subseteq I(A)$ if $B$ is a hereditary subalgebra of $A$. More precisely, if $B \subseteq A$ is a hereditary subalgebra, then there exists a projection $p \in I(A)$ such that the inclusion $B\subseteq I(A)$ extends to a ${}^*$-isomorphism $I(B) \cong pI(A)p$ \cite[Proposition 6.5]{Ham82_tensor}. Specifically, the projection $p$ is the supremum of any increasing approximate identity for~$B$. If, in addition, $B$ is an ideal in $A$, then the projection~$p$ is central and hence $I(B)$ is a unital ideal of $I(A)$. Moreover, in this case $B^\perp=A\cap (1-p)I(A)$ and $I(B^\perp)=(1-p)I(A)$, see Lemma~1.1 and Theorem~1.5 of~\cite{Ham82_centre}.

\subsection{Partial Actions}

In this subsection we briefly discuss partial group actions on $\Cast$-algebras, and verify that a partial action of a group on a $\Cast$-algebra admits a natural extension to a partial action on its injective envelope. We refer to the book of Exel \cite{Exel17} for an extensive account on partial actions and their associated crossed products. 

Throughout this paper~$G$ is a discrete group with unit element~$e$.

\begin{defi}\label{def: partial action}
    A \emph{partial action} of $G$ on a $\Cast$-algebra $A$ is given by a pair $\alpha=\partacg{A}{\alpha}$, where $\{A_g\}_{g\in G}$ is a family of (closed two-sided) ideals in~$A$ and $\alpha_g \colon A_{g^\inv} \to A_g$ is a ${}^*$-isomorphism for all $g\in G$, such that 
   \begin{enumerate}
       \item $A_e = A$, and $\alpha_e=\id$,
       \item $\alpha_g(A_{g^\inv} \cap A_h) = A_{gh} \cap A_g$ for all $g,h\in G$,
       \item $\alpha_g(\alpha_h(x)) = \alpha_{gh}(x)$ for all $x\in A_{h^\inv}\cap A_{(gh)^\inv}$ for all $g,h\in G$.
   \end{enumerate}
  We call $(A,G, \{A_g\}_{g\in G}, \{\alpha_g\}_{g\in G})$, or simply $(A, G, \alpha)$, a \emph{partial $\Cast$-dynamical system}. We will say that a partial action $\alpha=\partacg{A}{\alpha}$ is \emph{unital} if each ideal~$A_g$ is unital, i.e. for each $g\in G$ there is a central projection $p_g\in A$ such that $A_g= A p_g$. When $A_g=A$ for all $g\in G$, so that $\alpha=\partacg{A}{\alpha}$ is an ordinary group action, we refer to $\alpha=\partacg{A}{\alpha}$ as a \emph{global} group action, and to  $(A, G, \alpha)$ as a global $\Cast$-dynamical system.
\end{defi}

We record the following key example classes of partial actions that will be particularly important to us.

\begin{examp}[Restrictions of global actions]
    A natural class of examples of partial actions comes from global actions via restrictions to not-necessarily invariant ideals. Let $\alpha$ be an action of~$G$ on a $\Cast$-algebra $A$ and $I \trianglelefteq A$ an ideal in~$A$. For each $g\in G$ set $I_g := I \cap \alpha_g(I)$ and $\beta_g := \alpha\vert_{I_{g^\inv}}$. Then $\beta=\partacg{I}{\beta}$ is a partial action of~$G$ on~$I$, to which we refer as the \emph{restriction} of $\alpha$ to $I$.
\end{examp} 

\begin{examp}[Partial ${}^*$-automorphisms] A \emph{partial ${}^*$-automorphism} of $A$ is a ${}^*$-isomorphism $\alpha\colon I \to J$ between two ideals $I,J\ideal A$. A partial ${}^*$-automorphism of~$A$ induces a partial action $\hat{\alpha}=(\{A_n\}_{n\in \mathbb{Z}},\{\hat{\alpha}_n\}_{n\in\mathbb{Z}})$ of the integers $\mathbb{Z}$ on~$A$ with $A_{-1}=I$, $A_1=J$ and $\hat{\alpha}_1=\alpha$, see \cite[Example 2.2]{McClan95}.  
    \end{examp}

    \begin{examp}[Partial actions on commutative $\Cast$-algebras]\label{ex: abelian-partial-ac} Let $X$ be a locally compact Hausdorff space. Partial actions of~$G$ on $\mathrm{C}_0(X)$ are in one-to-one correspondence with topological partial actions of~$G$ on~$X$, i.e. pairs $\theta=(\{\mathrm{U}_g\}_{g\in G},\{\theta_g\}_{g\in G})$, where $\mathrm{U}_g\subseteq X$ is open and $\theta_g: \mathrm{U}_{g^\inv}\to \mathrm{U}_g$ is a homeomorphism for all $g\in G$, satisfying axioms analogous to those in Definition \ref{def: partial action}. This correspondence sends a topological partial action $\theta=(\{\mathrm{U}_g\}_{g\in G},\{\theta_g\}_{g\in G})$ on $X$ to $\theta^*=(\{\mathrm{C}_0(\mathrm{U}_g)\}_{g\in G},\{\theta^*_g\}_{g\in G})$, where $\theta_g^*(f)=f\circ\theta_{g^\inv}$, for $f\in \mathrm{C}_0(\mathrm{U}_{g^\inv}) $ and $g\in G$.
         \end{examp}

\begin{defi}\label{def: G-equivariant-ccp} Let $A$ and $B$ be $\Cast$-algebras and $\alpha=\partacg{A}{\alpha}$ and $\beta=\partacg{B}{\beta}$ be partial actions on~$A$ and~$B$, respectively. We will say that a contractive completely positive (ccp) map $\phi: A\to B$ is \emph{$G$-equivariant} if $\phi(A_g)\subseteq B_g$ and $\phi\circ\alpha_g=\beta_g\circ\phi$ for all $g\in G$.
    \end{defi}

The partial action $\alpha=\partacg{A}{\alpha}$ on $A$ extends uniquely to a partial action $\alpha^\ddual=(\{A^\ddual_g\}_{g\in G},\{\alpha_g^\ddual\}_{g\in G})$ on the bidual $A^\ddual$ of $A$, where $A^\ddual_g$ is viewed as a $\mathrm{W}^*$-ideal of $A^\ddual$ under the natural inclusion, see \cite[Example 2.3]{Busetal22}. This partial action is unital, and the inclusion $A\subseteq A^\ddual$ is $G$-equivariant. 

In this paper we will require an extension of~$\alpha=\partacg{A}{\alpha}$ to a (unital) partial action on the injective envelope of~$A$. Given an ideal $J\ideal A$ we will regard in what follows $I(J)$ as an ideal in $I(A)$ under the canonical identification, see Subsection~\ref{subsec: injective-alg}. As for global actions, we show next that a partial action on~$A$ naturally extends to a partial action on its injective envelope $I(A)$. 

\begin{prop}\label{prop: partac on I(A)}
Let $A$ be a $\Cast$-algebra and $I(A)$ its injective envelope. Let $\alpha=\partacg{A}{\alpha}$ be a partial action of~$G$ on $A$. Then there exists a unique partial action $I(\alpha)=\partacg{I(A)}{I(\alpha)}$ of~$G$ on~$I(A)$ such that $I(A)_g=I(A_g)$ for all $g\in G$ and the inclusion $A\subseteq I(A)$ is $G$-equivariant. In particular, $I(\alpha)$ is unital.
\end{prop}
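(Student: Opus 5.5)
Write $p_g\in I(A)$ for the central projection with $I(A_g)=p_gI(A)$, as recalled in Subsection~\ref{subsec: injective-alg}. We set $I(A)_g:=I(A_g)=p_gI(A)$; these are unital ideals of $I(A)$. To define $I(\alpha)_g$, we use that each $\alpha_g\colon A_{g^{-1}}\to A_g$ is a ${}^*$-isomorphism. By uniqueness of injective envelopes, $\alpha_g$ extends uniquely to a ${}^*$-isomorphism $I(\alpha)_g\colon I(A_{g^{-1}})\to I(A_g)$. Indeed, $I(A_g)$ together with the composition $A_{g^{-1}}\xrightarrow{\alpha_g}A_g\subseteq I(A_g)$ is an injective essential extension of $A_{g^{-1}}$, hence isomorphic to $I(A_{g^{-1}})$ via a unique map compatible with the embeddings. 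Then $I(\alpha)_e=\id$, since $\id$ is the unique extension of $\id_A$ (rigidity). Equivariance of $A\subseteq I(A)$ holds by construction: $A_g\subseteq I(A_g)$ and $I(\alpha)_g$ extends $\alpha_g$.

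The main work is verifying axioms (ii) and (iii) of Definition~\ref{def: partial action}. For (ii), we note that $A_{g^{-1}}\cap A_h$ is an ideal of $A_{g^{-1}}$, so it has a central projection $q\le p_{g^{-1}}$ in $I(A_{g^{-1}})$ with $I(A_{g^{-1}}\cap A_h)=qI(A)$. We first check that $q=p_{g^{-1}}p_h$, i.e.\ $I(J\cap K)=I(J)\cap I(K)$ for ideals $J,K$ of $A$. Since $J\cap K=JK$ has approximate unit given by products of approximate units, its support projection is $p_Jp_K$. More robustly, $p_Jp_KI(A)$ is an ideal of $I(A)$ whose intersection with $A$ is $J\cap K$, and the essential-ideal characterization from \cite{Ham82_centre} pins down the support projection. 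Next, a ${}^*$-isomorphism between unital injective envelopes maps $I(J)$ onto $I(\alpha_g(J))$ for an ideal $J$ of $A_{g^{-1}}$. This is because $I(\alpha)_g(p_J)$ is the supremum of the image of an increasing approximate unit of $J$: ${}^*$-isomorphisms of monotone complete algebras preserve suprema, being order isomorphisms. Combining the two facts gives $I(\alpha)_g(I(A)_{g^{-1}}\cap I(A)_h)=I(A)_{gh}\cap I(A)_g$.

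For (iii), both $I(\alpha)_g\circ I(\alpha)_h$ and $I(\alpha)_{gh}$ restricted to $I(A_{h^{-1}}\cap A_{(gh)^{-1}})$ are ${}^*$-isomorphisms onto $I(A_{g}\cap A_{gh})$; the range of the composition is identified by (ii) applied twice. These two maps agree on $A_{h^{-1}}\cap A_{(gh)^{-1}}$ by axiom (iii) for $\alpha$. Composing one with the inverse of the other therefore gives a ucp self-map of $I(A_{h^{-1}}\cap A_{(gh)^{-1}})$ fixing that ideal, which by rigidity is the identity. The same rigidity argument gives uniqueness: any partial action on $I(A)$ with $I(A)_g=I(A_g)$ making the inclusion equivariant has $g$-th map extending $\alpha_g$, hence it equals $I(\alpha)_g$. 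Unitality holds since each $I(A)_g=p_gI(A)$ with $p_g$ central. I expect the main obstacle to be the identification of support projections, namely $I(J\cap K)=I(J)\cap I(K)$ and the transport of $I(J)$ under $I(\alpha)_g$; everything else is formal rigidity.
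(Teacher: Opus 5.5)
Your proposal is correct and follows essentially the same route as the paper. The ${}^*$-isomorphisms $I(\alpha)_g\colon I(A_{g^{-1}})\to I(A_g)$ come from injectivity, rigidity and essentiality, which you package as uniqueness of injective envelopes, and uniqueness comes from rigidity. The partial-action axioms come from uniqueness of the central support projections: the paper asserts this in one line, while you spell it out via $I(J\cap K)=I(J)\cap I(K)$, normality of ${}^*$-isomorphisms between monotone complete algebras, and rigidity.

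There is one slip in that spelled-out step. In general $p_Jp_KI(A)\cap A$ is $(J\cap K)^{\perp\perp}$, not $J\cap K$; take $J$ essential and $K=A$. Argue instead as follows. The element $q:=p_Jp_K(1-p_{J\cap K})$ is a central projection. By Hamana's description $J^\perp=A\cap(1-p_J)I(A)$ (and likewise for $K$), together with $qp_{J\cap K}=0$, we get $qI(A)\cap A\subseteq J^{\perp\perp}\cap K^{\perp\perp}\cap (J\cap K)^{\perp}$. This last set is $\{0\}$: if $a$ lies in it and $x\in J$, then $ax\in K^{\perp}\cap K^{\perp\perp}=\{0\}$, so $a\in J^\perp\cap J^{\perp\perp}=\{0\}$. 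Since every nonzero ideal of $I(A)$ meets $A$, it follows that $q=0$.
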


\begin{proof} Fix $g\in G$. By injectivity of $I(A_g)$ there exists a ucp map $I(\alpha)_g \colon I(A_{g^\inv}) \to I(A_g)$ extending the ${}^*$-isomorphism $\alpha_g \colon A_{g^\inv} \to A_g$. This is surjective because $I(\alpha)_g \circ I(\alpha)_{g^\inv}=\id_{I(A_g)}$ by rigidity of the inclusion $A_g\subseteq I(A_g)$, and by essentiality $I(\alpha)_g $ is completely isometric. It follows that $I(\alpha)_g $ is a ${}^*$-isomorphism since it is a surjective complete isometry between $\Cast$-algebras. Uniqueness of $I(\alpha)_g$ follows because if $\beta_g:I(A_{g^\inv}) \to I(A_g)$ is another ucp map extending $\alpha_g$, then by rigidity both $I(\alpha)_g $ and $\beta_g$ are inverses of $I(\alpha)_{g^\inv}$, which implies that $\beta_g=I(\alpha)_g $. In particular, $I(\alpha)_g$ is a ${}^*$-isomorphism. By uniqueness of the central projection $p_g\in I(A)$ with $I(A_g)=p_gI(A)$ for $g\in G$ the pair $I(\alpha)=\partacg{I(A)}{I(\alpha)}$ satisfies the required axioms of a partial action, since $\alpha=\partacg{A}{\alpha}$ does. 
\end{proof}

\subsection{Crossed products by partial actions}\label{sec: partial crossed product}
As for global $\Cast$-dynamical systems, there are notions of full and reduced crossed products associated to a partial action of a discrete group on a $\Cast$-algebra. We briefly recall the main aspects of these constructions here, and refer the reader to \cite{Exel17, QuiRae97} for further details.  For the construction of the reduced crossed associated to $\alpha=\partacg{A}{\alpha}$ we will mainly follow the approach of Quigg and Raeburn in~\cite{QuiRae97}. For an equivalent construction of a reduced crossed product using the more general framework of Fell bundles we refer to the textbook of Exel~\cite{Exel17}.

Fix a partial $\Cast$-dynamical system $(A, G, \alpha)$. For each $g\in G$ and $a\in A_g$ let $a\delta_g\in \mathrm{C}_c(G, A)$ be the function given by  \begin{equation*}\label{eq: induced-expectation}
       a\delta_g(t)=  \begin{cases}
                    a & \text{if }\,  t=g,   \\
                    0 & \text{otherwise. }
                 \end{cases}\end{equation*}
Let $A\rtimes_{\mathrm{alg}}G$ be the subspace of $\mathrm{C}_c(G, A)$ spanned by $\{a\delta_g\,\vert\, g\in G, a\in A_g\}$. Notice that $b\in A\rtimes_{\mathrm{alg}}G $ has a unique expression as a finite sum $b=\sum_{g\in G}a_g\delta_g$ with $a_g\in A_g$ for all $g\in G$. Let $A\rtimes_{\mathrm{alg}}G$ be endowed with the multiplication and involution operations given on spanning elements by
\begin{align*}
    a\delta_g b\delta_h &:= \alpha_g( \alpha_{g^\inv}(a) b) \delta_{gh},\\
    (a\delta_g)^* &:= \alpha_{g^\inv}(a^*) \delta_{g^\inv}
\end{align*} for all $g,h\in G$, $a\in A_g, b\in A_h$. With these operations $A\rtimes_{\mathrm{alg}}G$ is a ${}^*$-algebra. Moreover, if $p\colon A\rtimes_{\mathrm{alg}}G\to \mathbb{R}_+ $ is a $\Cast$-seminorm on $A\rtimes_{\mathrm{alg}}G$, then $$p\big(\sum_{g\in G}a_g\delta_g\big)\leq \sum_{g\in G}\Vert a_g\Vert<\infty. $$ 

 \begin{defi} The \emph{full crossed product} of~$A$ by~$G$ associated to the partial action $\alpha=\partacg{A}{\alpha}$, denoted by $A\rtimes G$, or $A\rtimes_\alpha G$, is the enveloping $\Cast$-algebra of $(A\rtimes_{\mathrm{alg}}G, \Vert\cdot\Vert_{\max})$, where $\Vert\cdot\Vert_{\max}\colon A\rtimes_{\mathrm{alg}}G\to \mathbb{R}_+ $ is the $\Cast$-seminorm defined for $b\in A\rtimes_{\mathrm{alg}}G$ by $$\Vert b\Vert_{\max}:=\sup\{p(b)\mid p\text{ is a $\Cast$-seminorm on } A\rtimes_{\mathrm{alg}}G\}. $$ We will see in what follows that the $\Cast$-seminorm $\Vert\cdot\Vert_{\max}$ is in fact a $\Cast$-norm. 
 \end{defi}

A \emph{partial ${}^*$-representation}  of the group~$G$ in a unital $\Cast$-algebra $B$ is given by a map $u\colon G \to B$ such that $u_e = 1$, $u_{g^\inv} = u_g^*$ and $u_g u_h u_{h^\inv} = u_{gh} u_{h^\inv}$ for all $g,h\in G$.  From the definition we see that~$u_g$ is a partial isometry for all~$g\in G$.  A \emph{covariant representation} of the partial $\Cast$-dynamical system $(A, G, \alpha)$ in a unital $\Cast$-algebra~$B$ is a pair $(\pi,u)$, where $\pi \colon A \to B$ is a ${}^*$-homomorphism and $u\colon G\to B$ is a partial ${}^*$-representation such that
\begin{equation*}
u_g \pi(a)u_{g^\inv} = \pi(\alpha_g(a)) 
\end{equation*}
for all $g\in G, a\in A$. A covariant representation $(\pi, u)$ of $(A, G,\alpha)$ in $B$ induces a ${}^*$-homomorphism $\pi\times u\colon A\rtimes G\to B$ that maps a spanning element $\iota(a\delta_g)$ to $(\pi\times u)(\iota(a\delta_g))=\pi(a)u_g$ for $g\in G$, $a\in A_g$, where $\iota\colon A\rtimes_{\mathrm{alg}}G\to A\rtimes G$ is the canonical map. Conversely, if $\rho$ is a non-degenerate ${}^*$-representation of the full crossed product $A\rtimes G$ on a Hilbert space $\hilb$, then $\rho=\pi\times u$ for a unique covariant representation $(\pi, u)$ of $(A, G, \alpha)$ in $B(\hilb)$ such that~$\pi$ is a non-degenerate ${}^*$-representation of~$A$ on $\hilb$, and $u\colon G\to B(\hilb)$ is a partial ${}^*$-representation of~$G$ with $u_gu_{g^{-1}}$ the orthogonal projection onto $\pi(A_g)\hilb$ for all $g\in G$, see \cite[Theorem~13.2]{Exel17}. 

It will be very helpful for us in this paper to view covariant representations of $(A, G,\alpha)$ in terms of normal representations of $(A\rtimes G)^\ddual$. For each $g\in G$ let $q_g$ denote the unit of $A_g^\ddual$. As shown in \cite{QuiRae97} there exists a partial ${}^*$-representation $m\colon G\to (A\rtimes G)^\ddual$ such that $m_gm_{g^{-1}}=\iota_A^\ddual(q_g)$ for all $g\in G$, where $\iota_A\colon A\to A\rtimes G $ is the canonical ${}^*$-homomorphism (in fact, an inclusion) that sends $a\in A$ to $\iota_A(a\delta_e)$. Moreover, the pair $(\iota_A, m)$ is a covariant representation of $(A, G,\alpha)$ in $(A\rtimes G)^\ddual$ with $\iota(a\delta_g)=\iota_A(a) m_g$ for all $g\in G$ and $a\in A_g$. If $\rho\colon A\rtimes G\to B(\hilb)$ is a non-degenerate ${}^*$-representation and $\rho^\ddual\colon (A\rtimes G)^\ddual\to B(\hilb) $ is the unique normal extension of~$\rho$, then the covariant pair $(\pi, u)$ as above satisfying $\rho=\pi\times u$ is given by $u=\rho\circ m$ and $\pi=\rho\circ\iota_A$. More explicitly, it follows from \cite[Proposition~2.4]{EchRae95} that if $(u_\mu)_\mu$ is an approximate identity for~$A_g$, then $u_g=\rho(m_g)=\lim_\mu\rho(\iota(u_\mu\delta_g))$, where the limit is taken in the strong operator topology of $B(\hilb)$ (see also \cite[p. 293]{EchRae95}).

For the construction of the reduced crossed product associated to~$\alpha$, let $\pi\colon A\to B(\hilb)$ be a faithful non-degenerate representation of $A$. Let $\pi^\ddual$ be the normal extension of $\pi$ and $\alpha^\ddual$ the partial action on $A^\ddual$ extending~$\alpha$. Define a ${}^*$-homomorphism $\pi^r\colon A\to B(\hilb\otimes\ell^2(G))$ by setting $$\pi^r(a):=\sum_{t\in G}\pi^\ddual(\alpha^\ddual_{t^{-1}}(aq_t))\otimes\chi_t,$$ where $\chi_t$ denotes the characteristic function of $\{t\}$, and the sum is taken under the strong${}^*$ operator topology of $B(\hilb\otimes \ell^2(G))$. Let $\lambda\colon G\to B(\ell^2(G))$, $g\mapsto \lambda_g$ be the left-regular representation of~$G$ and define a partial ${}^*$-representation $\nu\colon G\to B(\hilb\otimes \ell^2(G)) $ by putting $$\nu_g:=\sum_{t\in G}\pi^\ddual(q_{t^\inv}q_{t^\inv g})\otimes\chi_t\lambda_g,\qquad g\in G.$$ Then $(\pi^r,\nu)$ is a covariant representation of $(A,G, \alpha)$ such that $\pi^r$ is faithful. 
 \begin{defi} The \emph{reduced crossed product} of~$A$ by~$G$ associated to the partial action $\alpha=\partacg{A}{\alpha}$, denoted by $A\crossed G$, or $A\rtimes_{\alpha, r} G$, is the range of the ${}^*$-representation $\pi^r\times\nu$ of $A\rtimes G$ induced by the covariant pair $(\pi^r, \nu)$. 
 \end{defi}

 The reduced crossed product does not depend on the choice of the faithful representation of~$A$. We will identify~an element $a\in A$ with its image in~$A\crossed G$ under $\pi^r$, so for $g\in G$ and $a\in A_g$ we will simply write $a\nu_g$ for the spanning element $\pi^r(a)\nu_g$. Observe that $\nu_g\in A\crossed G$ for all $g\in G$ if and only if $\alpha$ is unital. The compression by the orthogonal projection of $\hilb\otimes \ell^2(G)$ onto $\hilb\otimes\chi_e$ induces a faithful conditional expectation $E_A\colon A\crossed G\to A$ satisfying $E_A(a\nu_g)=\delta_{e,g}a\nu_g$, where $\delta_{e,g}$ is the Kronecker delta. The existence of a faithful conditional expectation onto~$A$ with this property characterizes the reduced crossed product.

\section{Injective envelopes for unital partial \texorpdfstring{$\Cast$}{C*}-dynamical systems}\label{sec: inj envelopes partial}

In this section we will extend the construction of injective envelopes for $\Cast$-dynamical systems \cite{Ham85} to the setting of unital partial actions. The non-unital setting will be treated in \cref{sec: non-unital}.

\subsection{The category of generalized unital partial \texorpdfstring{$\Cast$}{C*}-dynamical systems}

We will require a notion of a partial action via hereditary subalgebras.

\begin{defi}\label{def: generalized partial action}
   A \emph{generalized unital partial action} of~$G$ on a $\Cast$-algebra $A$ is given by a pair $\alpha=\partacg{A}{\alpha}$, where $\{A_g\}_{g\in G}$ is a family of unital hereditary subalgebras of~$A$ and $\alpha_g \colon A_{g^\inv} \to A_g$ is a ${}^*$-isomorphism for all $g\in G$, such that 
   \begin{enumerate}
       \item $A_e = A$, and $\alpha_e=\id$,
       \item $\alpha_g(A_{g^\inv} \cap A_h) = A_{gh} \cap A_g$ for all $g,h\in G$,
       \item $\alpha_g(\alpha_h(x)) = \alpha_{gh}(x)$ for all $x\in A_{h^\inv}\cap A_{(gh)^\inv}$ for all $g,h\in G$,

              \item\label{item: commuting projection} $\{p_g\}_{g\in G}$ are commuting projections in~$A$, where $p_g$ is the unit of $A_g$ for all $g\in G$.
 \end{enumerate}
 We call $(A,G, \{A_g\}_{g\in G}, \{\alpha_g\}_{g\in G})$, or simply $(A, G, \alpha)$, a \emph{generalized unital partial $\Cast$-dynamical  system}. 
\end{defi}

Note that \cref{item: commuting projection} implies that the family of units forms a commutative subspace lattice \cite{Arv74}. Also, \cref{def: generalized partial action} naturally extends the notion of a (unital) partial action since ideals are, in particular, hereditary subalgebras, and \cref{item: commuting projection} holds automatically in this case. In case $A=\mathrm{C}(X)$ is commutative, a generalized unital partial action on $\mathrm{C}(X)$ is simply a unital partial action as in \cref{def: partial action}, i.e. a topological partial action on $X$ via clopen sets, so that the two definitions coincide in this case. Hence \cref{def: generalized partial action} is only relevant when $A$ is noncommutative. 

A natural class of generalized unital partial actions arises from restrictions of global actions to certain hereditary subalgebras. We will discuss further key examples later in this section.

\begin{examp}\label{ex: restriction-hereditary} Let $A$ be a unital $\Cast$-algebra and $(A,G,\alpha)$ a global $\Cast$-dynamical system. Let $B\subseteq A$ be a hereditary subalgebra with unit $p\in B$ and suppose that $\{\alpha_g(p)\}_{g\in G}$ are commuting projections in $A$. For each $g\in G$ set $p_g:=p\alpha_g(p)$. Then $\{p_g\}_{g\in G}$ are commuting projections, and as for partial actions in the usual sense we obtain a generalized unital partial action $\beta=\partacg{B}{\beta}$ on $B$ with $B_g = B \cap \alpha_g(B)=p_g Ap_g$ and $\beta_g = \alpha_g \vert_{B_{g^\inv}}$ for $g\in G$. We call this the \emph{restriction} of~$\alpha$ to~$B$.
\end{examp}

The $G$-equivariant ccp maps between generalized unital partial actions are defined as for partial actions, see \cref{def: G-equivariant-ccp}. We will consider the category of generalized unital partial actions with a special class of $G$-equivariant maps as morphisms, which we define below. 

\begin{defi}\label{def: g-morphism}
   Let $(A,G,\alpha)$ and $(B, G,\beta)$ be generalized unital partial $\Cast$-dynamical systems with corresponding commuting families of projections $\{p_g\}_{g\in G}$ and $\{q_g\}_{g\in G}$, respectively.  A ucp map $\phi \colon A \to B$ is called a \emph{$G$-morphism} if
    \begin{enumerate}
        \item $\phi$ is \emph{$G$-unital}, i.e. $\phi(p_g) = q_g$ for all $g\in G$,
        \item $\phi$ is \emph{$G$-equivariant}, i.e. $\phi(\alpha_g(x)) = \beta_g(\phi(x))$ for all $g\in G$ and $x\in A_{g^\inv}$.
    \end{enumerate}
    If $\phi$ is in addition completely isometric, we call $\phi$ a \emph{$G$-embedding}. If $\phi$ is a surjective $G$-embedding, we call $\phi$ a \emph{$G$-isomorphism}. Notice that $\phi$ is necessarily a ${}^*$-isomorphism in this case. 
    
    The category of generalized unital partial $\Cast$-dynamical systems has generalized unital partial $\Cast$-dynamical systems as its objects and $G$-morphisms as morphisms.
\end{defi}

 \begin{rem} Observe that Axiom (ii) of \cref{def: g-morphism} requires that $\phi(A_g) \subseteq B_g$ for all $g\in G$. To see that this inclusion is automatic, let $\phi\colon A\to B$ be a ucp map between $\Cast$-algebras $A$ and $B$ and recall that the \emph{multiplicative domain $\dom_\phi$ of $\phi$} is defined as 
\begin{align*}
\dom_\phi &:= \{ a\in A \; \vert \; \phi(a^*a) = \phi(a)^*\phi(a) \text{ and } \phi(aa^*)= \phi(a) \phi(a)^*\}.
\end{align*}
An element $a\in A$ lies in  $\dom_\phi$ if and only if $\phi(ab)= \phi(a)\phi(b)$ and $\phi(ba) = \phi(b)\phi(a)$ for all $b\in A$. If $\phi$ is as in \cref{def: g-morphism}, it follows that each projection~$p_g$ lies in the multiplicative domain of $\phi$ because $\phi$ is $G$-unital. Hence $\phi(A_g)=\phi(p_g Ap_g)\subseteq q_g B q_g$ for all $g\in G$.  
\end{rem}

Our main motivating example of a generalized unital partial action comes from partial ${}^*$-representations. An example of a $G$-morphism that will be very useful in this paper arises from covariant representations.

\begin{examp}\label{ex: partial action on B(H)} Let $u: G\to B(\hilb)$, $g\mapsto u_g$ be a partial ${}^*$-representation. Then $u$ induces a generalized unital partial action $\beta^u=(\{B_g^u\}_{g\in G},\{\beta_g^u\}_{g\in G})$ on $B(\hilb)$ as follows. For each $g\in G$ set $q^u_g:= u_gu_{g^{-1}}$, so that $q^u_g$ is the range projection of $u_g$. Then $\{q_g\}_{g\in G}$ are commuting projections in $B(\hilb)$ \cite[Proposition 9.8]{Exel17}. The unital hereditary subalgebra~$B^u_g$ is defined by $B^u_g:= q^u_g B(\hilb) q^u_g $ for all $g\in G$. The ${}^*$-isomorphism $\beta^u_g: B^u_{g^{-1}}\to B^u_g$ is given by $\beta^u_g(b) = u_g b u_g^*$ for $g\in G$ and $b\in B^u_{g^{-1}}$.  Notice that $B^u_g$ is an ideal in~$B(\hilb)$ if and only if $q^u_g=1$. 

 Let $\alpha=\partacg{A}{\alpha}$ be a partial action and $(\pi,u)$ a covariant representation of $(A, G,\alpha)$ on $\hilb$. As above this induces a generalized unital partial action $\beta^u=(\{B_g^u\}_{g\in G},\{\beta_g^u\}_{g\in G})$ on~$B(\hilb)$. Moreover, observe that $\pi$ is a $G$-equivariant ${}^*$-homomorphism. If $\alpha$ is unital, then $\pi$ is  $G$-unital and hence a $G$-morphism. 
\end{examp}

Another class of examples of $G$-morphisms that will be important to us comes from crossed products. Recall that in the case of a global $\Cast$-dynamical system $(A, G, \alpha)$ with~$A$ unital, the action of~$G$ on~$A$ extends to an inner action on the crossed product $A\crossed G$ with the ${}^*$-automorphisms being implemented by the family of unitaries $\{\lambda_g\}_{g\in G}$ in $A\crossed G$, where $g\mapsto \lambda_g$ is the canonical unitary representation of $G$ in $A\crossed G$. The analogous statement is not true in general for unital partial actions. However, we observe below that a unital partial action of $G$ on a $\Cast$-algebra naturally induces a generalized unital partial action on the corresponding crossed product. 

\begin{prop}\label{prop: gen part ac on crossed product}
    Let $(A, G, \alpha)$ be a unital partial $\Cast$-dynamical system. Then there exists a generalized unital partial action $\beta=\partacg{B}{\beta}$ on $ A \crossed G$ such that the inclusion $A\subseteq A \crossed G $ is a $G$-morphism. 
    
    More specifically, if $\nu: G\to  A \crossed G$ denotes the canonical partial ${}^*$-representation of $G$ in $A \crossed G$ and $p_g$ is the unit of the ideal $A_g$, then $B_g=p_g(A \crossed G)p_g$ and the ${}^*$-homomorphism $\beta_g: B_{g^\inv}\to B_g$ is implemented by the partial isometry~$\nu_g$ for all $g\in G$.
    \end{prop}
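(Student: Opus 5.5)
The plan is to take the canonical partial ${}^*$-representation $\nu$ of $G$ and use it to implement the generalized partial action, following the pattern of \cref{ex: partial action on B(H)}, but carried out inside $A\crossed G$ instead of $B(\hilb)$. Since $\alpha$ is unital, $\nu_g\in A\crossed G$ for all $g\in G$, as observed after the definition of the reduced crossed product. By the covariance relation $m_gm_{g^\inv}=q_g$, transported through $\pi^r\times\nu$, we get $\nu_g\nu_{g^\inv}=p_g$, where $p_g$ is the unit of $A_g$. This can also be checked directly from the formula $\nu_g\nu_g^*=\sum_t \pi^\ddual(q_{t^\inv}q_{t^\inv g})\otimes\chi_t$ together with the fact that $\pi^r(p_g)$ equals that same sum, because $\alpha_{t^\inv}(p_gp_t)=p_{t^\inv}p_{t^\inv g}$ by axiom (ii).

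Set $B_g:=p_g(A\crossed G)p_g$ and $\beta_g(x):=\nu_gx\nu_g^*$ for $x\in B_{g^\inv}$. The verification then proceeds in four steps.

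First, each $B_g$ is a unital hereditary subalgebra with unit $p_g$. The projections $p_g$ are central in $A$, so they commute with one another, which gives axiom (iv).

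Second, $\beta_g$ is a ${}^*$-isomorphism $B_{g^\inv}\to B_g$. Because $\nu_g^*\nu_g=p_{g^\inv}$ and $\nu_g p_{g^\inv}\nu_g^*=p_g$, the map $\beta_g$ sends $B_{g^\inv}$ into $B_g$, and it is multiplicative since $\nu_g^*\nu_g$ acts as the identity on $B_{g^\inv}$. Its inverse is $\beta_{g^\inv}$, using $\nu_{g^\inv}=\nu_g^*$.

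Third, the partial action axioms. Axiom (i) holds since $\nu_e=1$. For axiom (iii), take $x\in B_{h^\inv}\cap B_{(gh)^\inv}$. Then $x=p_{h^\inv}p_{h^\inv g^\inv}\,x\,p_{h^\inv}p_{h^\inv g^\inv}$, and we need $\nu_g\nu_h x\nu_h^*\nu_g^*=\nu_{gh}x\nu_{gh}^*$. Here I would use the partial representation identities $\nu_g\nu_h\nu_{h^\inv}=\nu_{gh}\nu_{h^\inv}$ and its mirror $\nu_{g^\inv}\nu_g\nu_h=\nu_{g^\inv}\nu_{gh}$. These give $\nu_g\nu_h p_{h^\inv}=\nu_{gh}p_{h^\inv}$ after writing $p_{h^\inv}=\nu_{h^\inv}\nu_h$. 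Axiom (ii) follows from (iii) together with the commutation rule $\nu_g p_h\nu_g^*=p_gp_{gh}$, which is covariance applied to $p_hp_{g^\inv}$ combined with axiom (ii) for $\alpha$. From this rule, $\beta_g(p_{g^\inv}p_h)=p_gp_{gh}$, so $\beta_g$ carries the corner $B_{g^\inv}\cap B_h=p_{g^\inv}p_h(A\crossed G)p_{g^\inv}p_h$ onto $B_g\cap B_{gh}$. The equality of the intersection of corners with the corner of the product uses that the projections commute.

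Fourth, the inclusion $A\subseteq A\crossed G$ is a $G$-morphism. It is unital and sends $p_g$ to $p_g$, so it is $G$-unital. Equivariance, $\nu_ga\nu_g^*=\alpha_g(a)$ for $a\in A_{g^\inv}$, is exactly the covariance relation.

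I expect the main obstacle to be the bookkeeping in axiom (ii): identifying $B_{g^\inv}\cap B_h$ with the corner cut by $p_{g^\inv}p_h$, and justifying $\nu_gp_h\nu_g^*=p_gp_{gh}$ rigorously from covariance. Both reduce to known partial-representation identities (cf.\ \cite[Proposition 9.8]{Exel17}).
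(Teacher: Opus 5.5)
Your proposal is correct and follows essentially the same route as the paper. Both set $B_g=p_g(A\crossed G)p_g$ and $\beta_g=\nu_g(\cdot)\nu_g^*$, use $\nu_g\nu_g^*=p_g$ together with covariance, obtain axiom (ii) from $\alpha_g(p_{g^\inv}p_h)=p_gp_{gh}$ and axiom (iii) from $\nu_g\nu_h\nu_{h^\inv}=\nu_{gh}\nu_{h^\inv}$. Your version just spells out more of the routine checks, and your axiom (ii) argument does not actually need axiom (iii), despite your phrasing.
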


\begin{proof} Notice that the canonical partial ${}^*$-representation $\nu: G\to  A \crossed G$ satisfies $\nu_g\nu_{g^{-1}}=p_g$ and $\nu_g a \nu_{g^{-1}}=\alpha_g(p_{g^\inv} a)$ for all $g\in G$ and $a\in A$ since $\nu_g$ implements $\alpha_g$ on $A_{g^\inv}=p_{g^\inv}A$. In particular, the map $\beta_g: B_{g^\inv}\to B_g$, $b\mapsto \nu_g b \nu_{g^{-1}}$ is a ${}^*$-isomorphism that restricts to $\alpha_g$ on $A_{g^\inv}$. Axiom (ii) in \cref{def: generalized partial action} follows because $\alpha_g(p_{g^{-1}}p_h)=p_gp_{gh}\leq p_{gh}$ and Axiom (iii) follows since $\nu_g \nu_h \nu_{h^{-1}}=\nu_{gh} \nu_{h^{-1}}$ for all $g, h\in G$. Also, from above the inclusion $A\subseteq A \crossed G$ is a $G$-morphism. 
\end{proof}

\begin{rem}\label{rem: crossed-action-no-units} Observe that for each $g\in G$ the hereditary subalgebra $B_g$ of $A \crossed G$ in Proposition~\ref{prop: gen part ac on crossed product} can be naturally identified with the reduced crossed product $A_g\crossed G$ of the induced partial action of~$G$ on~$A_g$. More precisely, the underlying family of ideals of this partial action is $\{A_{g,h}\}_{h\in G}$ where $A_{g,h}:=A_g\cap A_h\cap A_{hg}$ with the natural collection of ${}^*$-isomorphisms $A_{g,h^\inv}\cong A_{g,h}$ given by restriction. Thus we have $$B_g=\overline{\lincomb}\{ a \nu_h \, \vert \, a \in A_g\cap A_h\cap A_{hg}\}.$$ This description does not explicitly require the family of units, and will be used in \cref{sec: non-unital} when we discuss non-unital partial actions.
\end{rem}


Another important example of a $G$-morphism in the setting of partial actions arises from the canonical conditional expectation on the reduced crossed product.

\begin{examp}
    Let $(A, G, \alpha)$ be a unital partial $\Cast$-dynamical system and consider the canonical generalized unital partial action on the reduced crossed product $A\crossed G$ as in Proposition~\ref{prop: gen part ac on crossed product}. Let $E_A \colon A\crossed G\to A$ be the canonical conditional expectation. Then~$E_A$ is a $G$-morphism since the inclusion $A\subseteq A\crossed G$ is.
\end{examp}

\subsection{\texorpdfstring{$G$}{G}-Injectivity and \texorpdfstring{$G$}{G}-Essentiality}

 An \emph{extension} of a generalized unital partial $\Cast$-dynamical system $(A, G, \alpha)$ is a pair $((B, G, \beta),\kappa)$, where $(B, G, \beta)$ is a generalized unital partial $\Cast$-dynamical system and $\kappa\colon A \to B$ is a $G$-embedding. When the $G$-embedding $\kappa$ is understood, we simply say that $(B, G, \beta)$ is an extension of $(A, G, \alpha)$.

We will need the following analog of Proposition \ref{prop: partac on I(A)} in the setting of partial actions via unital hereditary subalgebras. 

\begin{prop}\label{prop: injective-p-action-hereditary}
Let $A$ be a $\Cast$-algebra and $I(A)$ its injective envelope. Let $\alpha=\partacg{A}{\alpha}$ be a generalized unital partial action of~$G$ on~$A$ with corresponding family of commuting projections $\{p_g\}_{g\in G}$. Then there exists a unique generalized unital partial action $I(\alpha)=\partacg{I(A)}{I(\alpha)}$ of~$G$ on~$I(A)$ such that $I(A)_g=I(A_g)$ for all $g\in G$ and the inclusion $A\subseteq I(A)$ is a $G$-morphism. In particular, $(I(A), G, I(\alpha))$ is an extension of $(A, G,\alpha)$.
\begin{proof} The argument in the proof of Proposition~\ref{prop: partac on I(A)} shows that there exists a unique generalized unital partial action $I(\alpha)=\partacg{I(A)}{I(\alpha)}$ on $I(A)$ with $I(A)_g=I(A_g)$ for all $g\in G$ and such that the canonical inclusion $A\subseteq I(A)$ is $G$-equivariant. By \cite[Proposition 6.3]{Ham82_tensor} we have $I(A_g)=p_gI(A)p_g$ for all $g\in G$ and hence the inclusion $A\subseteq I(A)$ is also $G$-unital, i.e. a $G$-morphism, so that $(I(A), G, I(\alpha))$ is an extension of $(A, G,\alpha)$.
    \end{proof}
\end{prop}

\begin{defi}\label{def: partial-g-injectivity}
We will say that generalized unital  partial $\Cast$-dynamical system $(B,G,\beta)$ is \emph{injective} if for any pair of generalized unital partial actions $\gamma=\partacg{C}{\gamma}$ and $\alpha=\partacg{A}{\alpha}$ with a $G$-embedding $\kappa \colon A \to C$ and $G$-morphism $\phi \colon A \to B$ there exists a $G$-morphism $\Hat{\phi}\colon C \to B$ such that $\hat{\phi}\circ \kappa = \phi$. In other words, the diagram \begin{align*}
    \begin{tikzcd}[ampersand replacement=\&]
         A  \arrow[d,hook, swap, "{\kappa}"] \arrow[r, "\phi"] \& B  \\
        C \arrow[ru, dashed, swap, "{\hat{{\phi}}}"] \&
        \end{tikzcd}
\end{align*}
commutes. When there is no chance of confusion we will omit the generalized unital partial action~$\beta$, and simply say that $B$ is $G$-injective. 
\end{defi}

We call an extension $((B, G,\beta),\kappa)$ of $(A,G,\alpha)$ \emph{injective} if $(B, G,\beta)$ is injective. An extension $((B, G,\beta),\kappa)$ of $(A,G,\alpha)$ is called \emph{essential} if given a generalized unital partial action $\gamma=\partacg{C}{\gamma}$ and a $G$-morphism $\phi\colon B \to C $ such that $\phi \circ \kappa$ is a $G$-embedding, then~$\phi$ is a $G$-embedding. 

\begin{rem}\label{rem: global-G-morphism} Note that if $A$ is a unital $\Cast$-algebra and $(A, G,\alpha)$ is a global $\Cast$-dynamical system, i.e. $A_g = A$ for all $g\in G$, then $(A, G,\alpha)$ is injective as in \cref{def: partial-g-injectivity} if and only if $A$ is $G$-injective in the sense of Hamana~\cite{Ham85}. This follows because if $\beta=\partacg{B}{\beta}$ is a generalized unital partial action and $\phi\colon A\to B$ is a $G$-morphism, then $\beta$ must be a global action. 
    \end{rem}

Our next immediate goal is to show that every generalized unital partial $\Cast$-dynamical system admits an injective extension, following the main steps in the construction of injective envelopes in~\cite{Ham79, Ham85}.

Recall that for a $\Cast$-algebra $A$ we can equip $\ell^\infty(G,A)$ with the canonical left translation action~$\tau$ given by $\tau_g(f)(t) = f(g^\inv t)$ for $g,t\in G$ and $f\in \ell^\infty(G,A)$. Furthermore, \cite[Lemma 2.2]{Ham85} shows that if $A$ is injective, then $\ell^\infty(G,A)$ is $G$-injective. In our setting we will require a construction that utilizes a corner of~$\ell^\infty(G,A)$ and the corresponding restriction of the left translation action. 

Let $\alpha=\partacg{A}{\alpha}$ be a generalized unital partial action with corresponding family of commuting projections $\{p_g\}_{g\in G}$, i.e. $A_g = p_gAp_g$ for all $g\in G$. Define a projection $p\in \ell^\infty(G,A) $ by $p(t) := p_{t^\inv}$ for $t\in G$. Then $\{\tau_g(p)\}_{g\in G}$ are commuting projections in $\ell^\infty(G,A)$ and hence the restriction of the left translation action as in Example~\ref{ex: restriction-hereditary} is a generalized unital partial action on the corner $p\ell^\infty(G,A)p$ whose corresponding family of commuting projections is $\{p\tau_g(p)\}_{g\in G}$.  
In particular, we have
\begin{equation*}\label{eq: def of Bg}
\begin{aligned}
    p\tau_g(p) \ell^\infty(G,A)p\tau_g(p)&= \{ f\in \ell^\infty(G,A) \, \vert \, f(t) \in A_{t^\inv}\cap A_{t^\inv g}\}\\
   & = \{ f\in \ell^\infty(G,A) \, \vert \, f(t) \in p_{t^{-1}}p_{t^{-1}g}Ap_{t^{-1}}p_{t^{-1}g}\}.
    \end{aligned}
\end{equation*}
Observe that the map $\iota \colon A \to p\ell^\infty(G,A)p$ defined by $\iota(a)(t) = \alpha_{t^\inv} (p_t a p_t)$ for $t\in G$ and $a\in A$ is a ${}^*$-homomorphism if $\alpha=\partacg{A}{\alpha}$ is a partial action in the usual sense, which has already appeared before in the setting of enveloping actions of partial actions. See, for instance, the proofs of \cite[Theorem 4.5]{DokEx05} and \cite[Proposition~2.7]{Abetal22}. We will show next that it is in general a $G$-embedding.

\begin{prop}\label{prop: G-embed into B}
    Let $\alpha=\partacg{A}{\alpha}$ be a generalized unital partial action of~$G$ on a $\Cast$-algebra~$A$ with corresponding family of commuting projections $\{p_g\}_{g\in G}$. Consider the generalized unital partial action on $p\ell^\infty(G,A)p$ given by the restriction of the left translation action~$\tau$ as in Example~\ref{ex: restriction-hereditary}, where $p(t)=p_{t^{-1}}$ for all $t\in G$. Then the map  $\iota \colon A \to p\ell^\infty(G,A)p$ defined by $\iota(a)(t) = \alpha_{t^\inv} (p_t a p_t)$ for $t\in G$ and $a\in A$ is a $G$-embedding.
\end{prop}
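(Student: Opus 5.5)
We have to verify four things for $\iota$: it is ucp and lands in the right corner, it is completely isometric, it is $G$-unital, and it is $G$-equivariant. Throughout, a $G$-embedding is understood as in \cref{def: g-morphism}: a completely isometric $G$-morphism, not necessarily multiplicative.

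\emph{Ucp, corner and complete isometry.} For each $t\in G$, the coordinate map $a\mapsto \alpha_{t^\inv}(p_tap_t)$ is a compression onto $A_t=p_tAp_t$ followed by the ${}^*$-isomorphism $\alpha_{t^\inv}\colon A_t\to A_{t^\inv}$. So each coordinate is cp and contractive, and its value lies in $A_{t^\inv}=p_{t^\inv}Ap_{t^\inv}$. Hence $\iota$ is a cp map into $p\ell^\infty(G,A)p$. It is unital onto the corner, since $\iota(1)(t)=\alpha_{t^\inv}(p_t)=p_{t^\inv}=p(t)$. Because the norm on $M_n(\ell^\infty(G,A))$ is the supremum over coordinates, complete isometry follows from the $t=e$ coordinate, which is $\iota(a)(e)=a$: $\alpha_e=\id$ and $p_e=1$.

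\emph{$G$-unitality.} We need $\iota(p_g)=p\tau_g(p)$ for every $g$. Coordinatewise this reads
\[
\alpha_{t^\inv}(p_tp_gp_t)=p_{t^\inv}p_{t^\inv g}.
\]
Since the projections commute, $p_tp_gp_t=p_tp_g$ is the unit of $A_t\cap A_g$. By axiom (ii), $\alpha_{t^\inv}$ maps $A_t\cap A_g$ onto $A_{t^\inv}\cap A_{t^\inv g}$. A ${}^*$-isomorphism of unital algebras carries unit to unit, and the unit of $A_{t^\inv}\cap A_{t^\inv g}$ is $p_{t^\inv}p_{t^\inv g}$. Here one uses that for commuting projections $p,q$ one has $pAp\cap qAq=pqApq$.

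\emph{$G$-equivariance.} This is the main step. Fix $g$ and $x\in A_{g^\inv}$. Equivariance means $\iota(\alpha_g(x))=\tau_g(\iota(x))$ on the corner, i.e.
\[
\alpha_{t^\inv}\bigl(p_t\alpha_g(x)p_t\bigr)=\alpha_{t^\inv g}\bigl(p_{g^\inv t}\,x\,p_{g^\inv t}\bigr)\qquad\text{for all } t\in G.
\]
The plan is as follows.
\begin{enumerate}
\item Write $p_t\alpha_g(x)p_t=\alpha_g(p_{g^\inv}p_{g^\inv t}\,x\,p_{g^\inv t}p_{g^\inv})$. This uses that the restriction of $\alpha_g$ to $A_{g^\inv}$ is multiplicative, together with $\alpha_g(p_{g^\inv}p_{g^\inv t})=p_gp_t$, which holds by the unit argument above. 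We also need $p_gp_t\alpha_g(\cdot)p_tp_g=p_t\alpha_g(\cdot)p_t$, which holds because $\alpha_g(x)\in A_g$.
\item Set $y:=p_{g^\inv t}\,x\,p_{g^\inv t}$. Then $y\in A_{g^\inv}\cap A_{g^\inv t}$, so the left-hand side becomes $\alpha_{t^\inv}(\alpha_g(y))$.
\item Apply axiom (iii) with the pair $(t^\inv,g)$. Its domain condition is $y\in A_{g^\inv}\cap A_{(t^\inv g)^\inv}=A_{g^\inv}\cap A_{g^\inv t}$, which holds. This gives $\alpha_{t^\inv}\alpha_g(y)=\alpha_{t^\inv g}(y)$, which is the right-hand side.
\end{enumerate}

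The only delicate point is the bookkeeping of the compressions. Because the $A_g$ are hereditary subalgebras rather than ideals, one cannot pass $p_t$ through $\alpha_g$ freely. Instead one must use that on the unital algebra $A_{g^\inv}$ the map $\alpha_g$ is a ${}^*$-homomorphism sending the unit $p_{g^\inv}p_{g^\inv t}$ of the subalgebra $A_{g^\inv}\cap A_{g^\inv t}$ to $p_gp_t$.
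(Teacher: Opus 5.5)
Your proof is correct and follows essentially the same route as the paper. You get complete isometry from the $t=e$ coordinate, $G$-unitality from the identity $\alpha_{t^{-1}}(p_tp_g)=p_{t^{-1}}p_{t^{-1}g}$, and equivariance by rewriting $p_t\alpha_g(x)p_t$ as $\alpha_g(p_{g^{-1}t}xp_{g^{-1}t})$ and applying axiom (iii); you additionally derive that unit identity from axiom (ii) and check that $\iota$ is ucp into the corner, both of which the paper leaves implicit.
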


\begin{proof} That $\iota$ is completely isometric follows by setting~$t=e$.
 To see that $\iota$ is a $G$-morphism, first observe that
 $$\iota(p_g)(t) = \alpha_{t^\inv}(p_t p_g p_t) = \alpha_{t^\inv}(p_t p_g)\alpha_{t^\inv}(p_t) = p_{t^\inv} p_{t^\inv g}p_{t^\inv} = p_{t^\inv} p_{t^\inv g} = p(t) \tau_g(p)(t).$$
We used above that $\alpha_g(p_{g^\inv}p_h) = p_gp_{gh}$ for $g,h\in G$. This shows that $\iota$ preserves the units and hence is $G$-unital.

It remains to show that $\beta_g(\iota(a)) = \iota(\alpha_g(a))$ for all $g\in G$ and $a\in A$. To see this let $a\in A_{g^\inv}$. Then for $t\in G$ we obtain
\begin{align*}
\beta_g(\iota(a))(t) &= \iota(a)(g^\inv t) = \alpha_{t^\inv g} (p_{g^\inv t} a p_{g^\inv t}) \\&= \alpha_{t^\inv}(\alpha_{g}( p_{g^\inv t}p_{g^\inv} a p_{g^\inv t}p_{g^\inv}))\\
&= \alpha_{t^\inv}(\alpha_{g}( p_{g^\inv t}p_{g^\inv} ) \alpha_{g}( a) \alpha_{g}( p_{g^\inv t}p_{g^\inv} ) ) \\&= \alpha_{t^\inv} (p_t p_g \alpha_g(a)p_t p_g) \\
&= \alpha_{t^\inv} (p_t \alpha_g(a)p_t) = \iota(\alpha_g(a))(t),
\end{align*}  
showing that $\beta_g(\iota(a)) = \iota(\alpha_g(a))$ as needed.
\end{proof}

We will show below that $p\ell^\infty(G,A)p$ is $G$-injective whenever $A$ is injective (in the category of operator systems).

\begin{prop}\label{prop: inj G-ext}
Let $\alpha=\partacg{A}{\alpha}$ be a generalized unital partial action of a discrete group~$G$ on a $\Cast$-algebra~$A$ with corresponding family of commuting projections $\{p_g\}_{g\in G}$. Suppose that $A$ is injective and consider $p\ell^\infty(G,A)p$ with the generalized unital partial action given by the restriction of the left translation action~$\tau$ as in Example \ref{ex: restriction-hereditary}, where $p(t)=p_{t^{-1}}$ for $t\in G$. Then $p\ell^\infty(G,A)p$ is $G$-injective. 
\end{prop}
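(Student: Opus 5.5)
The plan is to follow Hamana's argument for $\ell^\infty(G,A)$ in \cite[Lemma 2.2]{Ham85}: first extend through the coordinate at the identity using injectivity of $A$, and then rebuild the remaining coordinates from the partial action on the larger system. Write $B:=p\ell^\infty(G,A)p$ with generalized unital partial action $\beta$ as in Example~\ref{ex: restriction-hereditary}; its projections are $p\tau_g(p)$, with $(p\tau_g(p))(t)=p_{t^\inv}p_{t^\inv g}$. Let $(A',G,\alpha')$ and $(C,G,\gamma)$ be generalized unital partial $\Cast$-dynamical systems with projections $\{r_g\}_{g\in G}$ and $\{q_g\}_{g\in G}$ respectively. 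Let $\kappa\colon A'\to C$ be a $G$-embedding and $\phi\colon A'\to B$ a $G$-morphism. Consider the evaluation map $\mathrm{ev}_e\colon B\to p(e)Ap(e)=A$ (note $p_e=1$), and set $\phi_e:=\mathrm{ev}_e\circ\phi$. This is a ucp map $A'\to A$. Since $A$ is injective and $\kappa$ is (unital) completely isometric, there is a ucp map $\psi\colon C\to A$ with $\psi\circ\kappa=\phi_e$. We then define
\[
\hat\phi(c)(t):=\psi\bigl(\gamma_{t^\inv}(q_t c q_t)\bigr),\qquad c\in C,\ t\in G.
\]
Each coordinate of $\hat\phi$ is completely positive and contractive, so $\hat\phi\colon C\to\ell^\infty(G,A)$ is completely positive.

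The key step, which I expect to be the only real obstacle, is controlling how $\psi$ interacts with the projections $q_g$, since $\psi$ is produced abstractly. Here $G$-unitality of both maps does the job. We have $\psi(q_g)=\psi(\kappa(r_g))=\phi(r_g)(e)=(p\tau_g(p))(e)=p_g$, which is a projection. By Choi's theorem, a ucp map sending a projection to a projection contains that projection in its multiplicative domain. So each $q_g$ lies in $\dom_\psi$, and hence $\psi(q_g x q_g)=p_g\psi(x)p_g$ for all $x\in C$. Because $\gamma_{t^\inv}(q_tcq_t)\in q_{t^\inv}Cq_{t^\inv}$, this gives $\hat\phi(c)(t)\in p_{t^\inv}Ap_{t^\inv}$, so $\hat\phi$ maps into $B$. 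Moreover $\hat\phi(1)(t)=\psi(q_{t^\inv})=p_{t^\inv}=p(t)$, so $\hat\phi$ is unital into $B$.

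Next we check that $\hat\phi$ is a $G$-morphism, imitating the computations in the proof of Proposition~\ref{prop: G-embed into B} with $(\alpha,p_g)$ replaced by $(\gamma,q_g)$. For $G$-unitality, $\hat\phi(q_g)(t)=\psi(\gamma_{t^\inv}(q_tq_g))=\psi(q_{t^\inv}q_{t^\inv g})=p_{t^\inv}p_{t^\inv g}$. This uses $\gamma_{t^\inv}(q_tq_g)=q_{t^\inv}q_{t^\inv g}$ and the multiplicativity of $\psi$ on the $q$'s. For equivariance, if $c\in C_{g^\inv}$, then
\[
\beta_g(\hat\phi(c))(t)=\hat\phi(c)(g^\inv t)=\psi\bigl(\gamma_{t^\inv g}(q_{g^\inv t}cq_{g^\inv t})\bigr),
\]
and the same chain of identities as in Proposition~\ref{prop: G-embed into B} rewrites this as $\psi(\gamma_{t^\inv}(q_t\gamma_g(c)q_t))=\hat\phi(\gamma_g(c))(t)$.

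Finally, $\hat\phi\circ\kappa=\phi$. Since $\kappa$ is $G$-equivariant and $G$-unital, $\hat\phi(\kappa(a))(t)=\psi\bigl(\kappa(\alpha'_{t^\inv}(r_tar_t))\bigr)=\phi\bigl(\alpha'_{t^\inv}(r_tar_t)\bigr)(e)$. Equivariance of $\phi$ turns this into $\beta_{t^\inv}\bigl(\phi(r_tar_t)\bigr)(e)=\phi(r_tar_t)(t)$. Since $r_t$ lies in the multiplicative domain of $\phi$, we have $\phi(r_tar_t)=p\tau_t(p)\phi(a)p\tau_t(p)$. Evaluating at $t$ gives $p_{t^\inv}\phi(a)(t)p_{t^\inv}=\phi(a)(t)$, because $\tau_t(p)(t)=p(e)=1$ and $\phi(a)(t)\in p_{t^\inv}Ap_{t^\inv}$. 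This proves $\hat\phi\circ\kappa=\phi$, and hence $B$ is $G$-injective.
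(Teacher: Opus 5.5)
Your proposal is correct and follows the paper's proof essentially step for step. You extend $x\mapsto\phi(x)(e)$ to a ucp map $\psi$ using injectivity of $A$, set $\hat\phi(c)(t)=\psi(\gamma_{t^{-1}}(q_tcq_t))$, and verify $G$-unitality, equivariance and $\hat\phi\circ\kappa=\phi$ by the same computations; your observation that $\psi(q_g)=p_g$ places each $q_g$ in the multiplicative domain of $\psi$ spells out a step the paper calls immediate, namely that $\hat\phi$ lands in $p\ell^\infty(G,A)p$ and sends $q_g$ to $p\tau_g(p)$.
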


\begin{proof}
To show that $p\ell^\infty(G,A)p$ is $G$-injective, we proceed as in \cite[Lemma~2.2]{Ham85}. Let $(C,G,\gamma)$ be a generalized unital partial $\Cast$-dynamical system and $\phi \colon C \to p\ell^\infty(G,A)p$ a $G$-morphism. Let $((D,G,\delta), \kappa)$ be an extension of $(C,G,\gamma)$. For each $g\in G$ let~$q_g$ and~$r_g$ denote the units of~$C_g$ and~$D_g$, respectively. The map $\psi \colon C \to A \colon x \mapsto \phi(x)(e)$ is a ucp~map and, because $A$ is injective, it extends to a ucp~map $\hat{\psi}\colon D \to A$ such that $\hat{\psi}\circ \kappa = \psi$. Define a map $\hat\phi \colon D \to \ell^\infty(G,A)$ by $$\hat\phi(x)(t) := \hat\psi(\delta_{t^\inv}(r_t x r_t))\qquad\qquad (x\in D, t\in G).$$ We will first prove that $\hat\phi_g(r_g)=p\tau_g(p)$ for all $g\in G$ and, in particular, $\hat\phi(D)\subseteq p\ell^\infty(G,A)p $. To see this notice that $\psi(q_g) = \phi(q_g)(e) = p_g$ for all $g\in G$ because $\phi$ is $G$-unital. Since $\kappa$ is also $G$-unital we have for all $g\in G$ that $$\hat\psi(r_g) = \hat\psi(\kappa(q_g))= \psi(q_g) = p_g.$$ Thus $\hat\psi$ is $G$-unital and it follows immediately from the definition of~$\hat\phi$ that $\hat\phi(r_g)=p\tau_g(p)$ for all $g\in G$. Hence $\hat\phi(D) \subseteq p\ell^\infty(G,A)p$ and $\hat\phi$ is $G$-unital. 

It remains to show that $\hat\phi$ is $G$-equivariant and $\hat\phi \circ \kappa = \phi$, i.e. $\hat\phi$ extends~$\phi$. Towards $G$-equivariance, let $g\in G$ and $x\in D_{g^\inv}$. We compute for $t\in G$
\begin{align*}
\hat\phi(\delta_g(x))(t) &= \hat\psi(\delta_{t^\inv}(r_t \delta_g(x)r_t)) =  \hat\psi(\delta_{t^\inv}(r_t r_g \delta_g(x) r_g r_t))\\
&= \hat\psi(\delta_{t^\inv}(\delta_g (\delta_{g^\inv}(r_t r_g )x\delta_{g^\inv}(r_t r_g )))) \\&= \hat\psi(\delta_{t^\inv g}(\delta_{g^\inv}(r_t r_g )x\delta_{g^\inv}(r_t r_g ))\\
&= \hat\psi(\delta_{t^\inv g}(r_{g^\inv t} r_{g^\inv} x r_{g^\inv t} r_{g^\inv})) \\&=  \hat\psi(\delta_{t^\inv g}(r_{g^\inv t} x r_{g^\inv t})) \\
&=\hat\phi(x)(g^\inv t) = \beta_g(\hat\phi(x))(t).
\end{align*} 
Thus $\hat\phi$ is $G$-equivariant.

To see that $\hat\phi \circ \kappa = \phi$, recall first that the units belong to the multiplicative domain of a $G$-morphism. Now for $x\in C$ and $t\in G$ we obtain 
\begin{align*}
\hat\phi(\kappa(x))(t) &= \hat\psi(\delta_{t^\inv}(r_t \kappa(x)r_t)) = \hat\psi(\delta_{t^\inv}(\kappa(q_t x q_t))) \\
&= \hat\psi(\kappa(\gamma_{t^\inv}(q_t x q_t))) = \psi(\gamma_{t^\inv}(q_t x q_t))\\
&= \phi(\gamma_{t^\inv}(q_t x q_t))(e) = \phi(q_t x q_t)(t)\\ 
&= \phi(q_t)(t)\phi(x)(t)\phi(q_t)(t) =  \phi(x)(t).
\end{align*}
In the last two lines above we used that $\phi\colon C\to p\ell^\infty(G,A)p$ is a $G$-morphism, while in the first line we used that $\kappa: C\to D$ is so.
\end{proof}

The following corollary is a crucial step in the construction of injective envelopes for unital partial $\Cast$-dynamical systems.

\begin{cor} Every generalized unital partial $\Cast$-dynamical system admits an injective extension. 
\begin{proof} This follows from Propositions~\ref{prop: G-embed into B} and \ref{prop: inj G-ext},  since every generalized unital partial $\Cast$-dynamical system admits an extension whose underlying $\Cast$-algebra is injective by Proposition~\ref{prop: injective-p-action-hereditary}.
    \end{proof}
    \end{cor}

We also obtain a characterization of $G$-injectivity as in the setting of global actions.

\begin{prop}\label{prop: alternative char G-inj}
  Let $\alpha=\partacg{A}{\alpha}$ be a generalized unital partial action of~$G$ on a $\Cast$-algebra $A$ with corresponding family of commuting projections $\{p_g\}_{g\in G}$. Let $p\in \ell^\infty(G,A)$ and $\iota\colon A \to p\ell^\infty(G,A)p $ be as in Proposition~\ref{prop: G-embed into B}. Then $(A,G,\alpha)$ is injective if and only if $A$ is injective and there exists a $G$-morphism $\phi \colon p\ell^\infty(G,A)p \to A$ such that $\phi \circ \iota = \id_A$.
\begin{proof} The statement follows by arguing as in \cite[Remark 2.3]{Ham85}
\end{proof}
\end{prop}

By \cite[Proposition~5.1]{Ham85}, corners of injective objects in the category of $\Cast$-dy\-nam\-ic\-al systems are again $G$-injective. We show next that a unital partial action coming from the restriction of an injective $\Cast$-dynamical system gives rise to an injective object in the category of generalized unital partial $\Cast$-dynamical systems. Our proof will require uniqueness of the enveloping action of a partial action, see \cite{Aba03} and also \cref{sec: enveloping action}.

\begin{cor}\label{cor: restriction is partially G-inj}
    Let $(A,G,\alpha)$ be a global $\Cast$-dynamical system. Suppose that $A$ is $G$-injective and let $q\in A$ be a central projection. Then the unital partial $\Cast$-dynamical system obtained by restricting $\alpha$ to~$Aq$ is injective.  
\end{cor}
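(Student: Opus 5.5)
We use the characterization of injectivity in Proposition~\ref{prop: alternative char G-inj}. Write $B:=Aq$ with the restricted partial action $\beta$, so $B_g=Aq\alpha_g(q)$ and the units are $p_g=q\alpha_g(q)$; these are central and commute. We need two things: that $B$ is injective as an operator system, and that there is a $G$-morphism $\phi\colon p\ell^\infty(G,B)p\to B$ with $\phi\circ\iota=\id_B$, where $p(t)=p_{t^{-1}}$ and $\iota(b)(t)=\beta_{t^{-1}}(p_tbp_t)$.

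The first point is routine. A $G$-injective $A$ is injective, being a ucp retract of the injective algebra $\ell^\infty(G,I(A))$ by \cite[Remark~2.3]{Ham85}. Hence the corner $B=qAq$ is injective, since compression by $q$ gives a ucp retraction.

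For the second point, the idea is to pass through the enveloping action. Consider the global system $(\ell^\infty(G,B),G,\tau)$. The element $p$ is a central projection in it, and restricting $\tau$ to $p\ell^\infty(G,B)p$ gives exactly the partial action in Proposition~\ref{prop: inj G-ext}. Proposition~\ref{prop: G-embed into B} provides the $G$-embedding $\iota$, which is a $^*$-homomorphism here.

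Let $\mathcal C\subseteq \ell^\infty(G,B)$ be the $G$-invariant $\Cast$-subalgebra generated by $\bigcup_g\tau_g(\iota(B))$. Then $(\mathcal C,\tau)$ is an enveloping action of $\beta$, with $\iota(B)=\mathcal C\,\iota(q)$ an ideal. This is the construction of \cite[Theorem~4.5]{DokEx05} and \cite{Abetal22}. The original system $(A,\alpha)$ also contains $B$ as the central corner $Aq$, and the $G$-invariant subalgebra $\mathcal C'\subseteq A$ generated by $\bigcup_g\alpha_g(Aq)$ is another enveloping action of $\beta$. By uniqueness of enveloping actions \cite{Aba03}, there is a $G$-equivariant $^*$-isomorphism $\theta\colon\mathcal C\to\mathcal C'$ with $\theta\circ\iota=\id_B$.

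Next we extend $\theta$. View $\theta$ as a $G$-equivariant ucp map $\mathcal C\to A$. We need it unital with respect to a unital inclusion, so we work with $\mathcal C+\mathbb C1\subseteq\ell^\infty(G,B)$ and send $1\mapsto1$; if $1\in\mathcal C$ or $1\in \mathcal C'$ we adjust accordingly, and this needs some care. By $G$-injectivity of $A$, we obtain a $G$-equivariant ucp map $\Theta\colon\ell^\infty(G,B)\to A$ extending $\theta$.

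Now define $\phi(f):=q\,\Theta(f)\,q$ for $f\in p\ell^\infty(G,B)p$. The remaining checks are as follows.
\begin{itemize}
\item \emph{$\phi$ is ucp into $B$.} We have $\Theta(p)=\Theta(\iota(1_B))=\theta(\iota(q))=q$. So $p$ lies in the multiplicative domain of $\Theta$, hence $\Theta$ maps $p\ell^\infty p$ into $qAq=B$, unitally.
\item \emph{$\phi$ is $G$-unital.} We have $p\tau_g(p)=\iota(p_g)\in\iota(B)$, since $\iota(p_g)=p\tau_g(p)$ by Proposition~\ref{prop: G-embed into B}. Therefore $\phi(p\tau_g(p))=p_g$.
\item \emph{$\phi$ is $G$-equivariant.} For $f\in p\tau_{g^{-1}}(p)\ell^\infty p\tau_{g^{-1}}(p)$ we have $\tau_g(f)\in p\tau_g(p)\ell^\infty p\tau_g(p)$. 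Equivariance of $\Theta$, together with the multiplicative-domain property for the projections $\iota(p_g)$, gives $\phi(\tau_g f)=\alpha_g(\phi(f))=\beta_g(\phi(f))$.
\item \emph{$\phi\circ\iota=\id_B$.} This holds because $\Theta\circ\iota=\theta\circ\iota=\id_B$.
\end{itemize}
With these checks, Proposition~\ref{prop: alternative char G-inj} gives the claim.

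The main obstacle is the step invoking enveloping actions. It requires identifying both $\mathcal C$ and $\mathcal C'$ as enveloping actions, in particular that the relevant corner is an ideal and the orbits generate, and then handling unitality so that Hamana's $G$-injectivity can be applied to an equivariant ucp map defined on a unital operator subsystem. If the unit issue causes trouble, we would instead extend from the operator system $\mathcal C+\mathbb C(1-\bigvee_g\tau_g(p))$, where the supremum is taken in the monotone complete algebra $\ell^\infty(G,B)$.
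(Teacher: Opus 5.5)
Your route is the paper's own. Like the paper, you reduce to Proposition~\ref{prop: alternative char G-inj}, get injectivity of $Aq$ as a corner, use uniqueness of enveloping actions to obtain $\theta\colon\mathcal C\to\mathcal C'$ with $\theta\circ\iota=\id_B$, extend by $G$-injectivity of $A$, and restrict to $p\ell^\infty(G,B)p$. Your four checks are correct once a suitable $\Theta$ exists. The step that fails as written is the unital extension you flagged, and neither of your remedies repairs it. (The paper asserts a ucp $\hat\rho\colon\ell^\infty(G,Aq)\to A$ with $\hat\rho\circ\iota=\id_{Aq}$ without comment, so its write-up has the same slip.)

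\emph{Why the extension can fail.} A unital $\Theta\colon\ell^\infty(G,B)\to A$ with $\Theta\circ\iota=\id_B$ need not exist at all. Suppose $q\notin\{0,1\}$ is $G$-invariant. Then $p_g=q$ for every $g$, so $p=\iota(q)$ is the unit of $\ell^\infty(G,B)$. Unitality then forces $\Theta(\iota(q))=1_A\neq q$. The obstruction is the requirement $\Theta\circ\iota=\id_B$ itself, so no readjustment of where $1$ is sent can help.

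\emph{Why the fallback fails.} It is vacuous. For every $t$ we have $\tau_t(p)(t)=p_e=q=1_B$, so $\bigvee_g\tau_g(p)=1_{\ell^\infty(G,B)}$. Hence $\mathcal C+\mathbb C(1-\bigvee_g\tau_g(p))=\mathcal C$.

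\emph{The fix: enlarge the domain rather than move the unit.} If $q=1$, the restricted system is $(A,G,\alpha)$ itself and Remark~\ref{rem: global-G-morphism} applies. If $q\neq 1$:
\begin{enumerate}
\item View $\ell^\infty(G,B)=\ell^\infty(G,A)\mathbf q$, where $\mathbf q$ is the constant function $q$. This is a $\tau$-invariant central projection of $\ell^\infty(G,A)$.
\item Every $c\in\mathcal C$ satisfies $c=c\mathbf q$, so $1_{\ell^\infty(G,A)}\notin\mathcal C$.
\item Therefore $c+\lambda 1\mapsto\theta(c)+\lambda 1_A$ is a well-defined unital $G$-equivariant ${}^*$-homomorphism on the $\tau$-invariant $\Cast$-algebra $\mathcal C+\mathbb C1\subseteq\ell^\infty(G,A)$.
\item Hamana's $G$-injectivity of $A$ now applies and gives a $G$-equivariant ucp map $\Theta\colon\ell^\infty(G,A)\to A$ extending it.
\item Since $p\le\mathbf q$, we have $p\ell^\infty(G,A)p=p\ell^\infty(G,B)p$.
\end{enumerate}
Your checks then go through verbatim: $\Theta(p)=q$, so $p$ lies in the multiplicative domain; $\Theta(p\tau_g(p))=\Theta(\iota(p_g))=p_g$; equivariance holds; and $\phi\circ\iota=\id_B$.
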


\begin{proof} Set $C:=Aq$. Since $A$ is injective and~$C$ is a corner of~$A$, it follows that $C$ is itself injective. Let $\gamma=\partacg{C}{\gamma}$ denote the unital partial $\Cast$-dynamical system given by the restriction of~$\alpha$ to~$C$.  For each $g\in G$ set $p_g:=q\alpha_g(q)$ and let $p\in  \ell^\infty(G, C)$ and $\iota\colon C\to\ell^\infty(G, C)$ be as in Proposition~\ref{prop: inj G-ext}. Let $\mathrm{Orb}_G^\tau(\iota(C))$ denote the $\Cast$-subalgebra of $\ell^\infty(G, C)$ spanned by the orbits of $\iota(C)$ under the left translation action. Similarly, let $\mathrm{Orb}_G^\alpha(C)$ be the $\Cast$-subalgebra of $A$ spanned the orbits of $C$ under~$\alpha$. By uniqueness of enveloping actions, there is a unique $G$-equivariant ${}^*$-isomorphism $\rho\colon\mathrm{Orb}_G^\tau(\iota(C))\to \mathrm{Orb}_G^\alpha(C)$ such that $\rho\circ \iota=\id_C$. 

Since $A$ is $G$-injective and $\rho$ is $G$-equivariant, there is a $G$-equivariant ucp map $\hat\rho\colon\allowbreak \ell^\infty(G, C)\to A $ satisfying $\hat\rho\circ\iota=\id_{C}$. In particular, $\hat\rho(p\tau_g(p))=q\alpha_g(q)$ for all $g\in G$ and by $G$-equivariance we deduce that the restriction of $\hat\rho$ to the corner $p\ell^\infty(G, C)p$ defines a $G$-morphism $\phi \colon p\ell^\infty(G,C)p \to C$ such that $\phi \circ \iota = \id_C$. Thus $(C, G,\gamma)$ is injective by Proposition~\ref{prop: alternative char G-inj}.
\end{proof}

\subsection{Injective envelopes for unital partial \texorpdfstring{$\Cast$}{C*}-dynamical systems}\label{sec: inj env construction}

In this section we extend work of Hamana \cite{Ham85} and show that every unital partial $\Cast$-dynamical system admits an injective envelope in the category of generalized unital partial $\Cast$-dynamical systems. Our proof utilizes the construction from Proposition~\ref{prop: inj G-ext} and the existence of faithful covariant representations of a partial action on Hilbert space. We will treat non-unital partial actions in \cref{sec: non-unital}.

\begin{defi}
Let $(A, G, \alpha)$ be a unital partial $\Cast$-dynamical system. An \emph{injective envelope} of $(A, G, \alpha)$ is an extension that is injective and essential. 
\end{defi}

As in \cite{Ham79, Ham85} a property of an extension that is closely related to essentiality and is often more convenient to work with is rigidity. An extension $((B,G,\beta),\kappa)$ of $(A,G,\alpha)$ is called \emph{rigid} if whenever $\phi\colon B \to B$ is a $G$-morphism such that $\phi \circ \kappa = \kappa$, then $\phi = \id_B$. The next lemma shows that rigidity is equivalent to essentiality in case of injective extensions for unital partial actions. For the proof we adapt the argument in \cite[Lemma~3.7]{Ham79} slightly to address the fact that the objects in our category come from partial actions on $\Cast$-algebras rather than on operator systems. 

\begin{lem}\label{lem: rigid implies essential}
Let  $\alpha=\partacg{A}{\alpha}$ be a unital partial action and $((B,G,\beta),\kappa)$ an injective extension of $(A,G,\alpha)$. Then $((B,G,\beta),\kappa)$ is rigid if and only if it is essential. 
\end{lem}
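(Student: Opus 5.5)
We prove the two implications separately, adapting Hamana's argument \cite[Lemma~3.7]{Ham79} to the category of generalized unital partial $\Cast$-dynamical systems.

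\emph{Essential $\Rightarrow$ rigid.} Let $\phi\colon B\to B$ be a $G$-morphism with $\phi\circ\kappa=\kappa$. Since $\kappa$ is a $G$-embedding, essentiality makes every $G$-morphism $\psi\colon B\to B$ with $\psi\circ\kappa=\kappa$ a $G$-embedding. Following Hamana, form the Cesàro averages $\phi_n=\frac1n\sum_{k=1}^n\phi^k$. Each $\phi_n$ is ucp, $G$-equivariant and $G$-unital, because these properties are preserved under composition and convex combinations, so each $\phi_n$ is a $G$-morphism fixing $\kappa(A)$. Take a point-weak$^*$ cluster point $\psi$ of $(\phi_n)$ in the bounded maps $B\to B^{\ddual}$. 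Projecting back into $B$ needs care, since $B$ is only injective, not a dual space. We handle this by taking a cluster point into $B(\hilb)$ for a faithful representation, where $B\subseteq B(\hilb)$, and then composing with a ucp projection $B(\hilb)\to B$ given by injectivity of $B$. Alternatively we can follow Hamana's route through minimal $B$-projections: the set of $G$-morphisms $B\to B$ fixing $\kappa(A)$ is a semigroup, and we use the seminorm argument to show $\phi=\id$.

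The cleaner route is the standard one. For $x\in B$, set $\psi=\id-\phi$ and consider the $G$-morphism $\theta=\tfrac12(\id+\phi)$. Hamana's argument shows $\|x-\phi(x)\|$ is killed as follows. Define the seminorm $p(x)=\limsup_n\|\phi_n(x)\|$, or better, pass to a $G$-morphism $\Phi$ that is an idempotent with $\Phi\phi=\Phi$, obtained as a limit of the $\phi_n$ in the $G$-injective setting. Such a $\Phi$ exists since $B$ is $G$-injective: apply $G$-injectivity to the range system. Then $\Phi$ fixes $\kappa(A)$, so by essentiality $\Phi$ is completely isometric. An isometric idempotent is the identity, since $\Phi(x-\Phi x)=0$ gives $x=\Phi x$. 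Hence $\Phi\phi=\phi$ forces $\phi=\id$ once we know $\Phi\phi=\Phi$. The main obstacle is producing such a $G$-morphism $\Phi$ while keeping both $G$-unitality and equivariance.

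\emph{Rigid $\Rightarrow$ essential.} Let $\phi\colon B\to C$ be a $G$-morphism with $\phi\circ\kappa$ a $G$-embedding. Let $((D,G,\delta),j)$ be an injective extension of $(C,G,\gamma)$, which exists by the preceding corollary. By $G$-injectivity of $B$, the $G$-morphism $\kappa\circ(j\circ\phi\circ\kappa)^{-1}$, defined on the operator system $j\phi\kappa(A)$, extends to a $G$-morphism $\psi\colon D\to B$. The catch is that $j\phi\kappa(A)$ is an operator system with compatible units rather than a $\Cast$-algebra, so $G$-injectivity must be applied to it. To do this we use the unitality of $\alpha$: the projections $\phi\kappa(p_g)$ lie in $j\phi\kappa(A)$. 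We then work in the subsystem of $D$ generated by $j\phi\kappa(A)$ via Proposition~\ref{prop: alternative char G-inj}, since $p\ell^\infty(G,\cdot)p$ makes sense for the ambient $D$ and the units are available. Then $\psi\circ j\circ\phi$ is a $G$-morphism $B\to B$ fixing $\kappa(A)$, so by rigidity it equals $\id_B$. Hence $\phi$ is completely isometric, i.e. a $G$-embedding.
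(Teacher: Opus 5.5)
Your rigid $\Rightarrow$ essential argument is essentially the paper's. Your worry that $j\phi\kappa(A)$ is only an operator system is unfounded. $G$-injectivity is formulated for an arbitrary $G$-embedding of an object, and a $G$-embedding only has to be completely isometric, not multiplicative. Since $\phi\circ\kappa\colon A\to C$ is a $G$-embedding by hypothesis, $G$-injectivity of $B$ applied to it and to the $G$-morphism $\kappa$ directly gives $\psi\colon C\to B$ with $\psi\circ\phi\circ\kappa=\kappa$. The auxiliary $D$ is not needed, and rigidity finishes. (The paper instead extends the $*$-homomorphism $\kappa\circ\rho$ from the $\gamma$-invariant algebra $\Cast(\phi(\kappa(A)))$, where $\rho$ comes from $A$ being its own $\Cast$-envelope.)

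The genuine gap is the direction essential $\Rightarrow$ rigid, which is not proved. You list several strategies without carrying any of them out. The one you settle on needs an idempotent $G$-morphism $\Phi\colon B\to B$ fixing $\kappa(A)$ with $\Phi\circ\phi=\Phi$, whose existence you only assert (``apply $G$-injectivity to the range system'') and yourself call the main obstacle. Your concrete suggestion for producing it does not give a $G$-morphism. That suggestion is a cluster point in $B(\hilb)$ followed by a ucp projection $B(\hilb)\to B$ obtained from injectivity of $B$ as an operator system. That projection has no reason to be $G$-equivariant, so essentiality cannot be applied to the composite.

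The missing idea is to use $G$-injectivity of $B$ to invert $\phi$.
\begin{enumerate}
\item By essentiality $\phi$ is a $G$-embedding. Applying $G$-injectivity to $\phi\colon B\to B$ and $\id_B$ yields a $G$-morphism $\hat\rho\colon B\to B$ with $\hat\rho\circ\phi=\id_B$.
\item Then $\hat\rho\circ\kappa=\hat\rho\circ\phi\circ\kappa=\kappa$, so $\hat\rho$ is completely isometric by essentiality. Being also surjective, it is a $*$-automorphism, and hence so is $\phi=\hat\rho^{-1}$.
\item The same applies to the $G$-morphism $\theta=\tfrac12(\id_B+\phi)$, which also fixes $\kappa(A)$. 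Comparing $\theta(x^*x)=\theta(x)^*\theta(x)$ with multiplicativity of $\phi$ gives $(x-\phi(x))^*(x-\phi(x))=0$, i.e.\ $\phi=\id_B$.
\end{enumerate}
This is the paper's proof. It phrases the first step through a $*$-homomorphism $\rho\colon\Cast(\phi(B))\to B$ with $\rho\circ\phi=\id_B$, which it then extends.

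Your Ces\`aro idea can also be rescued, and it needs no idempotent. Take a point-weak$^*$ cluster point $\psi\colon B\to B^{\ddual}$ of the averages $\phi_n$. Using the normal extension $\beta^{\ddual}$ of $\beta$ (as in the proof of Theorem~\ref{thm: essentiality-equivariant-maps}), $\psi$ is a $G$-morphism with $\psi\circ\kappa=\kappa$ and $\psi\circ\phi=\psi$. By $G$-injectivity there is a $G$-morphism $E\colon B^{\ddual}\to B$ with $E\vert_B=\id_B$. Then $\Phi=E\circ\psi$ is completely isometric by essentiality, and $\Phi(\phi(x)-x)=0$ forces $\phi=\id_B$.
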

\begin{proof} Suppose first that $((B,G,\beta),\kappa)$ is a rigid extension of~$(A,G,\alpha)$. Let $(C,G,\gamma)$ be a generalized unital partial $\Cast$-dynamical system with corresponding family of commuting projections $\{r_g\}_{g\in G}$ and $\phi\colon B\to C$ a $G$-morphism such that $\phi\circ\kappa$ is completely isometric. Since $\phi\circ \kappa\colon A\to C$ is a $G$-morphism, we have $r_g= \phi(\kappa(p_g))$ for all $g\in G$, where~$p_g$ is the unit of~$A_g$. A multiplicative domain argument shows that for all $g\in G$ the projection~$r_g$ commutes with $\operatorname{im}(\phi\circ\kappa)$ and, moreover, $r_g\Cast(\phi(\kappa(A)))=r_g\Cast(\phi(\kappa(A_g)))$. Because $\phi\circ\kappa$ is a $G$-morphism, this implies in particular that $\Cast(\phi(\kappa(A)))$ is $\gamma$-invariant, i.e. $\gamma_g\big(\Cast(\phi(\kappa(A)))\cap r_gCr_g\big)\subseteq \Cast(\phi(\kappa(A)))$ for all $g\in G$. Hence $\gamma$ restricts to a unital partial action on $\Cast(\phi(\kappa(A)))$ whose corresponding family of ideals is $\{\Cast(\phi(\kappa(A_g)))\}_{g\in G}$.

Let $\rho\colon \Cast(\phi(\kappa(A)))\to A $ be a ${}^*$-homomorphism such that $\rho\circ\phi\circ\kappa=\id_A$, where we are using here that~$A$ is the $\Cast$-envelope of~$A$ regarded as an operator system. Then~$\rho$ is a $G$-morphism. Consider the $G$-morphism $\kappa\circ\rho\colon \Cast(\phi(\kappa(A)))\to B $. Since~$B$ is $G$-injective, we can find a $G$-morphism $\psi\colon C\to B$ such that $\psi\vert_{\Cast(\phi(\kappa(A)))}=\kappa\circ\rho$. Then the composition $\psi\circ \phi$ satisfies $\psi\circ\phi\circ\kappa=\kappa$, and by rigidity of $((B,G,\beta),\kappa)$ we deduce that $\psi\circ\phi=\id_B$. It follows that~$\phi$ is completely isometric as needed.

For the converse, let $\phi\colon B\to B $ be a $G$-morphism such that $\phi\circ\kappa=\kappa$. Then the assumption implies that $\phi$ is completely isometric. Arguing as above we see that $\Cast(\phi)$ is $\beta$-invariant and there exists a $G$-morphism $\rho\colon \Cast(\phi(B))\to B$ such that $\rho\circ\phi=\id_B $. By $G$-injectivity of~$B$ we get a $G$-morphism $\hat{\rho}\colon B\to B$ extending $\rho$. Since $\hat{\rho}\circ\phi\circ\kappa=\kappa$ we obtain that $\hat{\rho}$ is completely isometric again by essentiality of $((B,G,\beta),\kappa)$. In particular, $\rho$ is a faithful ${}^*$-homomorphism, which implies that $\phi$ is a ${}^*$-homomorphism, and $\phi\colon B\to B$ is surjective since $\hat{\rho}$ is completely isometric. This gives that $\phi$ is a $G$-isomorphism. Repeating this argument as in \cite[Lemma~3.7]{Ham79}  with the $G$-morphism $\psi:=\frac{\id_B+\phi}{2}$, which satisfies $\psi\circ \kappa=\kappa$, gives that $\psi$ is a ${}^*$-automorphism of~$B$, and hence $\psi=\id_B=\phi$ by extremality of ${}^*$-automorphisms among ucp maps. 
\end{proof}

In what follows we will write $(A, G, \alpha)\subseteq (B,G,\beta) $ if there is a $G$-embedding $\kappa\colon A\to B$ that is a ${}^*$-homomorphism, i.e.  an inclusion of generalized unital partial $\Cast$-dynamical systems.
The following lemma will be needed to show that the injective envelope of a partial $\Cast$-dynamical system remains in the category of partial actions. 

\begin{lem}\label{lem:isomorphism-envelopes-AB} Let $\alpha=\partacg{A}{\alpha}$ and $\beta=\partacg{B}{\beta}$ be unital partial actions. Suppose that there are inclusions $$(A, G,\alpha)\subseteq (B,G,\beta)\subseteq (I(A),G, I(\alpha))$$ and that $(I_G(A), G,I_G(\alpha))$, $(I_G(B),G,I_G(\beta))$ and $(I_G(I(A)), G, I_G(I(\alpha)))$ are their respective injective envelopes. Then the following inclusions hold:
\begin{enumerate}
    \item[\rm{(1)}] $(A, G,\alpha) \subseteq (B, G,\beta)\subseteq (I_G(A),G, I_G(\alpha))$; in particular, $I_G(A)=I_G(B)=I_G(I(A))$.

    \item[\rm{(2)}] $Z(A)\subseteq Z(B)\subseteq Z(I_G(A))$, and $I_G(A)$ is commutative if~$A$ is.
\end{enumerate}
\begin{proof} For part (1), notice that the inclusions $A\subseteq B\subseteq I(A)$ are $G$-embeddings. We will show that the inclusion $A\subseteq I_G(A)$ induces a $G$-embedding of $\Cast$-algebras $B\subseteq I_G(A)$. By $G$-injectivity of $I_G(A)$ there exists a $G$-morphism $\psi\colon I_G(B)\to I_G(A)$ extending the inclusion $A\subseteq I_G(A) $. Since this is completely isometric on~$A$, the essentiality of the inclusion $A\subseteq B$ implies that $\psi$ is completely isometric on~$B$. Hence $\psi$ is completely isometric by $G$-essentiality of $B\subseteq I_G(B)$. Applying now $G$-injectivity of~$I_G(B)$ we get a $G$-morphism $\phi\colon I_G(A)\to I_G(B)$ satisfying $\phi\circ\psi=\id_{I_G(B)}$. This is completely isometric by $G$-essentiality since its restriction to~$A$ is so. Because $\phi\circ\psi=\id_{I_G(B)}$, it follows that $\phi$ is also surjective and hence a $G$-isomorphism, giving the required inclusions. 

For part (2), observe that the inclusion $Z(A)\subseteq Z(I(A))$ holds by \cite[Corollary 4.3]{Ham79_CStar} and $I(A)$ is abelian if $A$ is. In particular, $Z(A)\subseteq Z(B)$ since $B\subseteq I(A)$. Let $\iota\colon I(A)\to p\ell^\infty(G,I(A))p$ be the $G$-embedding from Proposition \ref{prop: G-embed into B}. Then $\iota$ is a ${}^*$-homomorphism. Proposition~\ref{prop: inj G-ext} shows that $p\ell^\infty(G,I(A))p$ is $G$-injective and hence there is a $G$-morphism $\phi\colon I_G(A)\to p\ell^\infty(G,I(A))p$ such that $\phi\circ\hat{\kappa}=\iota,$ where we denote by $\kappa$ the $G$-embedding $I(A)\subseteq I_G(A)$ as in part (1). Now $\phi$ is a $G$-embedding by essentiality of the inclusion $(A, G,\alpha)\subseteq (I_G(A), G,I_G(\alpha))$ and we conclude that $\phi\colon I_G(A)\to \operatorname{im}\phi$ is a $G$-isomorphism, where $\operatorname{im}\phi$ is equipped with the Choi--Effros product. To see that $Z(I(A))\subseteq Z(I_G(A)) $, observe that $\{p_g\}_{g\in G}$ are central projections in $I(A)$, and hence $\iota\big(Z(I(A))\big)\subseteq Z(\ell^\infty(G, I(A)))$. Thus from the definition of the Choi-Effros product on $\operatorname{im}\phi$ we see that $Z(I(A))\subseteq Z(I_G(A))$ and that $I_G(A)$ is commutative if $A$ is, proving part (2).
    \end{proof}
    \end{lem} 

We are now ready to show that every unital partial action has a $G$-injective envelope. 

\begin{thm}\label{thm: existence inj env}
Let $(A, G, \alpha)$ be a unital partial $\Cast$-dynamical system. Then $(A, G, \alpha)$ has an injective envelope $((I_G(A), G,I_G(\alpha)),\kappa)$. Moreover, $I_G(\alpha)=\partacg{I_G(A)}{I_G(\alpha)}$ is a unital partial action, and $(I_G(A), G,I_G(\alpha))$ is unique in the sense that if $((B, G,\beta), \kappa_B)$ is an injective envelope of $(A, G, \alpha)$, then there is a unique $G$-isomorphism $\psi\colon I_G(A)\to B$ such that $\psi\circ\kappa=\kappa_B$.
\end{thm}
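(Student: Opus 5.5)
We follow Hamana's construction from \cite{Ham79, Ham85}, using the injective extension built earlier. First replace $(A,G,\alpha)$ by $(I(A),G,I(\alpha))$ from Proposition~\ref{prop: partac on I(A)}. Since $I(A)$ is injective, Propositions~\ref{prop: G-embed into B} and \ref{prop: inj G-ext} provide a $G$-embedding $\iota\colon I(A)\to W:=p\ell^\infty(G,I(A))p$ with $W$ $G$-injective. Moreover, $\iota$ is a ${}^*$-homomorphism because $I(\alpha)$ is a genuine (unital) partial action. We then consider the set $\mathcal{S}$ of $A$-projections: $G$-morphisms $\phi\colon W\to W$ with $\phi\circ\iota|_A=\iota|_A$. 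A $G$-morphism is automatically $G$-unital, so it preserves the units $p\tau_g(p)$. We order $\mathcal{S}$ by Hamana's seminorm order $\phi\le\psi$ iff $\|\phi(x)\|\le\|\psi(x)\|$ for all $x$.

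Next we apply Zorn's lemma. $\mathcal{S}$ is compact in the point-weak$^*$ topology, since $W$ is a dual space, and the $G$-unitality and $G$-equivariance conditions are pointwise closed. Hence $\mathcal{S}$ has a minimal $A$-seminorm, and Hamana's argument produces a minimal idempotent $\phi_0\in\mathcal{S}$. Consider the averages $\frac1n\sum_{k=1}^n\phi^k$ and their limit points; these still lie in $\mathcal{S}$ because the conditions are convex. We put $I_G(A):=\phi_0(W)$, equipped with the Choi--Effros product $x\circ y=\phi_0(xy)$. It carries the restricted partial action: the projections $q_g:=p\tau_g(p)$ satisfy $\phi_0(q_g)=q_g$ and lie in the multiplicative domain. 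Thus $q_g\circ x\circ q_g=\phi_0(q_gxq_g)$, and $\tau_g$ maps $q_{g^{-1}}\circ I_G(A)\circ q_{g^{-1}}$ onto the corresponding corner by equivariance of $\phi_0$. The image is $G$-injective as the range of a $G$-idempotent on a $G$-injective object. Rigidity follows from minimality: any $G$-morphism $\psi$ of $I_G(A)$ fixing $A$ gives $\psi\circ\phi_0\in\mathcal{S}$, which forces $\psi=\id$ exactly as in \cite[Theorem~2.4]{Ham85}. Lemma~\ref{lem: rigid implies essential} then yields essentiality. Uniqueness is standard: given another envelope $(B,\kappa_B)$, injectivity produces $G$-morphisms in both directions that are compatible with $\kappa$ and $\kappa_B$. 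Their composites are the identity by rigidity, so both are $G$-isomorphisms, and uniqueness of $\psi$ is again rigidity.

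The main obstacle is that $I_G(\alpha)$ is a unital partial action in the usual sense, i.e.\ that the projections $q_g$ are central in $I_G(A)$, not merely the units of hereditary corners. For this we use Lemma~\ref{lem:isomorphism-envelopes-AB}(2). The units $p_g$ are central in $A$, hence in $I(A)$ by \cite[Corollary~4.3]{Ham79_CStar}. That lemma's argument shows $Z(I(A))\subseteq Z(I_G(A))$, since $\iota$ sends central elements of $I(A)$ into the centre of $\ell^\infty(G,I(A))$, and the Choi--Effros product preserves commuting with those elements. Thus each $q_g=\kappa(p_g)$ is central, the corners are ideals, and $I_G(\alpha)$ is a unital partial action. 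We also need the base change from $A$ to $I(A)$ to be harmless; Lemma~\ref{lem:isomorphism-envelopes-AB}(1) gives $I_G(A)=I_G(I(A))$, so we may construct the envelope starting from $I(A)$ directly.
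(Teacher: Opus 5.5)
There is a genuine gap at the compactness step. Your overall scheme would otherwise work: a $G$-injective ambient algebra, a minimal idempotent $G$-morphism fixing $A$, its range with the Choi--Effros product, rigidity from minimality, and essentiality via Lemma~\ref{lem: rigid implies essential}. Replacing the paper's Ellis-semigroup argument (minimal left ideal, left-zero semigroup) by Hamana's minimal-seminorm argument would also be fine.

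The problem is your claim that $\mathcal S$ is point-weak$^*$ compact because $W=p\ell^\infty(G,I(A))p$ is a dual space. This is false in general. $I(A)$ is injective, hence monotone complete, but it need not be a von Neumann algebra. For instance, $I(\mathrm{C}[0,1])$ is the Dixmier algebra of bounded Borel functions modulo meagre sets, which is not a $\mathrm{W}^*$-algebra. So $W$ has no weak$^*$ topology, and two steps are left unjustified:
\begin{enumerate}
\item the Zorn step, where a decreasing chain of $A$-seminorms needs a cluster point as a lower bound, via weak$^*$ lower semicontinuity of the norm;
\item the Ces\`aro-average cluster point that yields idempotency.
\end{enumerate}
This is exactly why the paper, like Hamana, builds the envelope inside $p\ell^\infty(G,B(\hilb))p$, which is a von Neumann algebra. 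It takes a faithful non-degenerate covariant representation $(\pi,u)$ and gives $B(\hilb)$ the generalized unital partial action of Example~\ref{ex: partial action on B(H)}, so that Proposition~\ref{prop: inj G-ext} applies. It then embeds $A$ by $\iota\circ\pi$.

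Once you make that repair, your centrality argument breaks. It works only because $\iota(p_g)$ is central in $\ell^\infty(G,I(A))$. By contrast, $\iota(\pi(p_g))$ is not central in $p\ell^\infty(G,B(\hilb))p$, so nothing forces the Choi--Effros product to commute with it. You need the essentiality argument behind Lemma~\ref{lem:isomorphism-envelopes-AB}(2), which is what the paper invokes, and it can only be run after existence is established:
\begin{enumerate}
\item By $G$-injectivity of $p\ell^\infty(G,I(A))p$, which needs only injectivity of $I(A)$ and no compactness, extend $\iota|_A$ to a $G$-morphism $I_G(A)\to p\ell^\infty(G,I(A))p$.
\item This map is a $G$-embedding by essentiality, hence a ${}^*$-isomorphism onto its range equipped with a Choi--Effros product.
\item In that range the image of $p_g$, being central in the ambient algebra, is central.
\end{enumerate}
Finally, your ``base change'' via Lemma~\ref{lem:isomorphism-envelopes-AB}(1) is unnecessary, since your $\mathcal S$ already fixes $A$. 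It would also be circular if used to prove existence, because that lemma presupposes that the envelopes exist.
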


\begin{proof} We will follow the approach in \cite[Theorem 4.9]{Ken+21}. By Lemma~\ref{lem: rigid implies essential} it suffices to show that $(A, G, \alpha)$ admits an injective extension that is rigid. Let $(\pi, u)$ be a faithful non-degenerate covariant representation of $(A, G, \alpha)$ on a Hilbert space $\hilb$. Consider the generalized unital partial action on $B(\hilb)$ induced by~$u$ as in Example~\ref{ex: partial action on B(H)} and consider also the generalized unital partial action on the corner $p\ell^\infty(G,B(\hilb))p$ obtained by restricting the left translation action on $\ell^\infty(G,B(\hilb))$ as in Proposition~\ref{prop: inj G-ext}. Here $p$ is the projection $p(t)=\pi(p_{t^{-1}})\in\ell^\infty(G,B(\hilb))$, where $p_g$ denotes the unit of $A_g$ for all $g\in G$. Then Proposition~\ref{prop: inj G-ext} implies that $p\ell^\infty(G,B(\hilb))p$ is $G$-injective. Let $\kappa\colon A \to p\ell^\infty(G,B(\hilb))p$ be the composition $\iota\circ \pi$, where $\iota\colon B(\hilb) \to p\ell^\infty(G,B(\hilb))p$ is the canonical $G$-embedding from Proposition~\ref{prop: G-embed into B}. Then $\kappa$ is a $G$-embedding because $\pi$ also is. Notice that $\kappa$ is in addition a ${}^*$-homomorphism. Also, observe that $p\ell^\infty(G,B(\hilb))p$ is a von Neumann algebra. Define
\begin{equation*}\label{eq: def of semigr}
S := \{\phi \colon p\ell^\infty(G,B(\hilb))p \to p\ell^\infty(G,B(\hilb))p \; \vert \; \phi \text{ is a } G\text{-morphism and } \phi\circ\kappa = \kappa\}.
\end{equation*}
This is a compact convex right topological semigroup under composition when equipped with the relative point-weak${}^*$ topology. Repeating the arguments in the proof of \cite[Theorem 4.9]{Ken+21} we obtain a minimal left ideal $L\subseteq S$. This is necessarily closed, and contains an idempotent, see \cite[Corollary 2.6]{HindStrau12}. The arguments in \cite[Theorem 4.9]{Ken+21} also show that $L$ is in fact a left zero semigroup, meaning that $\phi_1\circ \phi_2=\phi_1$ for all $\phi_1,\phi_2\in L$.

Now let $\psi\in L$ be an idempotent. Since $p\ell^\infty(G,B(\hilb))p$ is injective, it follows that $\operatorname{im}\psi$ is injective in the category of operator systems. Thus by \cite{ChoiEf77} it is isomorphic to a $\Cast$-algebra when endowed with Choi--Effros product $a\cdot b := \psi(ab)$. 
Since $\psi \in S$,  $\psi$ is a $G$-morphism with $\psi \circ \kappa = \kappa$. Hence the generalized unital partial action on $p\ell^\infty(G,B(\hilb))p$ restricts to a generalized unital partial action on $\operatorname{im}\psi$ such that the inclusion $\kappa\colon A\to \operatorname{im}\psi$ is a $G$-embedding. Again since $\psi$ is an idempotent, $p\ell^\infty(G,B(\hilb))p$ is $G$-injective and $\psi\colon p\ell^\infty(G,B(\hilb))p\to \operatorname{im}\psi$ is a $G$-morphism, it follows that $\operatorname{im}\psi$ is $G$-injective too.

Write $I_G(A):=\operatorname{im}\psi$ and let $I_G(\alpha)=\partacg{I_G(A)}{I_G(\alpha)}$ denote the generalized unital partial action on $I_G(A)$ from above. Then $I_G(A)$ is $G$-injective and $((I_G(A), G,I_G(\alpha)),\kappa)$ is an injective extension of $(A, G, \alpha)$. We will show now that this extension is rigid, and hence an injective envelope of~$(A, G, \alpha)$ by Lemma~\ref{lem: rigid implies essential}. Let $\phi \colon I_G(A) \to I_G(A)$ be a $G$-morphism such that $\phi \circ \kappa = \id_{\kappa(A)}$. View $\phi$ as a $G$-morphism $\phi\colon I_G(A)\to p\ell^\infty(G,B(\hilb))p$ using the inclusion $I_G(A)\subseteq p\ell^\infty(G,B(\hilb))p$. Then $\phi\circ\psi \in S$ and since $\psi\in L$ is an idempotent, we actually have $\phi\circ\psi=\phi\circ\psi\circ\psi\in L$. Because $L$ is a left zero semigroup we then obtain that $\psi\circ \phi\circ\psi=\psi$. Thus, using again that $\psi$ is an idempotent and $\operatorname{im}\phi\circ\psi\subseteq\operatorname{im}\psi$ we get $\phi\circ\psi=\psi\circ \phi\circ\psi=\psi$, which shows that $\phi=\id_{I_G(A)}$ as needed. Uniqueness of the injective envelope as in the statement follows as in \cite{Ham79}, see also the argument in \cite[Theorem 4.9]{Ken+21}. 

Finally we claim that $I_G(\alpha)=\partacg{I_G(A)}{I_G(\alpha)}$ is a unital partial action, i.e. $I_G(A)_g \ideal I_G(A)$ for all $g\in G$. For this notice that since $A_g$ is an ideal in~$A$, the unit $p_g\in A_g$ belongs to the center $Z(A)$ of $A$, and since $\kappa$ is $G$-unital, the unit of $I_G(A)_g$ is $\kappa(p_g)$. Thus the claim follows immediately from Lemma~\ref{lem:isomorphism-envelopes-AB}.
\end{proof}

When there is no chance of confusion,  we will simply say that $I_G(A)$ is the \emph{$G$-injective envelope} of~$A$. Moreover, for convenience we will often suppress the map $\kappa$ and simply identify an element $a\in A$ with its image in~$I_G(A)$.

\begin{rem} Notice that if~$A$ is unital, then the injective envelope of a global $\Cast$-dy\-nam\-ic\-al system $(A, G,\alpha)$ as in Theorem~\ref{thm: existence inj env} coincides with the $G$-injective envelope of~$A$ in the sense of Hamana \cite{Ham81}, see Remark~\ref{rem: global-G-morphism}.
    \end{rem}

We will be frequently using the following important technical observation.
\begin{prop}\label{prop: inclusion into I_G(A) normal}
    Let $(A, G,\alpha)$ be a unital partial $\Cast$-dynamical system and consider the inclusion $\kappa\colon A \hookrightarrow I_G(A)$ of~$A$ into its $G$-injective envelope. Then $\kappa$ is normal, i.e. if $(x_\lambda)_\lambda$ is an increasing net in~$A$ with $\sup_\lambda x_\lambda = x \in A$, then $\sup_\lambda \kappa(x_\lambda) = \kappa(x)$.
\end{prop}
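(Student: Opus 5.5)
Since $\kappa$ is a ${}^*$-homomorphism, hence positive, the net $(\kappa(x_\lambda))_\lambda$ is increasing and bounded above by $\kappa(x)$. As $I_G(A)$ is $G$-injective, it is in particular injective as a $\Cast$-algebra, hence monotone complete. So $y:=\sup_\lambda\kappa(x_\lambda)$ exists in $I_G(A)$ and satisfies $y\le\kappa(x)$. We must show $y=\kappa(x)$.

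The plan is to reduce to a known normality statement for the ordinary injective envelope and then transfer it using the structure of $I_G(A)$. First, by Lemma~\ref{lem:isomorphism-envelopes-AB}(1) we have $A\subseteq I(A)\subseteq I_G(A)$ as $G$-systems. For $A\subseteq I(A)$, normality is classical (Hamana): the supremum in $A$ of an increasing net equals its supremum in $I(A)$. Indeed, if $z=\sup_\lambda x_\lambda$ in $I(A)$ and $z\ne x$, then $x-z\ge0$ is nonzero, and essentiality of $A\subseteq I(A)$ gives $0\ne a\in A_+$ with $a\le$ a multiple of $x-z$ in a suitable sense. This would let $x-\varepsilon a$ be an upper bound in $A$ for $(x_\lambda)$ smaller than $x$, a contradiction. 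I would cite this as known rather than reprove it. It therefore suffices to show that $I(A)\subseteq I_G(A)$ is normal.

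For the second step I would use the realization from the proof of Lemma~\ref{lem:isomorphism-envelopes-AB}(2). There is a $G$-embedding $\phi\colon I_G(A)\to p\ell^\infty(G,I(A))p$ with $\phi\circ\kappa=\iota$, where $\iota(a)(t)=I(\alpha)_{t^{-1}}(p_ta)$. Its image, equipped with the Choi--Effros product, is $I_G(A)$, and it is the range of an idempotent $G$-morphism $E$ on $p\ell^\infty(G,I(A))p$. The map $\iota$ is normal, since coordinatewise it is a compression by a central projection followed by a ${}^*$-isomorphism $I(\alpha)_{t^{-1}}$, and suprema in $\ell^\infty$ are computed pointwise. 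Set $w:=\sup_\lambda\iota(x_\lambda)=\iota(x)$ in $\ell^\infty$. Then $\phi(y)$ is an upper bound for $\iota(x_\lambda)$ in the order of $\ell^\infty$, because $\phi$ is positive and the order on $\operatorname{im}\phi$ is inherited. Hence $\iota(x)\le\phi(y)\le\phi(\kappa(x))=\iota(x)$, which forces $\phi(y)=\iota(x)=\phi(\kappa(x))$. Injectivity of $\phi$ then gives $y=\kappa(x)$.

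The main obstacle is justifying that the inequality $\phi(y)\ge\iota(x_\lambda)$ holds in the order of $\ell^\infty$. This is immediate because the Choi--Effros structure only changes the product, not the order or the involution, so positivity in $\operatorname{im}\phi$ agrees with positivity in $\ell^\infty$. A second, subtler point is whether $\kappa$ is identified so that $\phi\circ\kappa=\iota$ on $I(A)$ rather than just on $A$. I would take the version from Lemma~\ref{lem:isomorphism-envelopes-AB}(2), where $\phi$ extends $\iota$ on $I(A)$. In fact it suffices to have this on $A$, given normality of $\iota$ on $A$ via the normality of $A\subseteq I(A)$.
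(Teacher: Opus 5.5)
Your argument is correct and follows essentially the route the paper indicates (Hamana's \cite[Lemma~3.1]{Ham85}(i) together with Lemma~\ref{lem:isomorphism-envelopes-AB}): normality of $A\subseteq I(A)$, the $G$-embedding $\phi\colon I_G(A)\to p\ell^\infty(G,I(A))p$ with $\phi\circ\kappa=\iota$, pointwise computation of suprema in $\ell^\infty$, and positivity plus injectivity of $\phi$ (the Choi--Effros product and the idempotent are not needed). One caveat: your heuristic for normality of $A\subseteq I(A)$ does not work as stated, because essentiality does not produce a nonzero $a\in A_+$ dominated by a multiple of an arbitrary nonzero $d\in I(A)_+$ (this already fails for $A=\mathrm{C}([0,1],M_2)$), so that step genuinely rests on Hamana's theory of regular monotone completions, and citing it, as you propose, is the right call.
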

\begin{proof} The argument follows as in \cite[Lemma~3.1]{Ham85}(i), see also Lemma~\ref{lem:isomorphism-envelopes-AB}.\end{proof}

\subsection{Injective envelopes, pseudo-expectations and the intersection property}\label{subsec: pseudo-exp}

We begin by recalling the ideal intersection property for inclusions of $\Cast$-algebras and partial $\Cast$-dynamical systems.

\begin{defi} Let $A$ and $B$ be $\Cast$-algebras with an inclusion of $\Cast$-algebras $A\subseteq B$. We say that the inclusion $A\subseteq B$ has the \emph{ideal intersection property} if $J \cap A = \trivial$ for an ideal $J\ideal B$ implies that $J = \trivial$. We will say that a partial $\Cast$-dynamical system $(A, G,\alpha)$ has the \emph{ideal intersection property} if the inclusion $A\subseteq A\crossed G$ has the ideal intersection property.
\end{defi}

Notice that $A\subseteq A\crossed G$ has the ideal intersection property if and only if a ${}^*$-ho\-mo\-mor\-phism $\rho\colon A\crossed G\to B(\hilb)$ is faithful whenever $\rho\vert_A$ is.

We will require the notion of a pseudo-expectation in the context of partial actions. This was introduced by Pitts in the context of general inclusions of $\Cast$-algebras \cite{Pitts17} (see also \cite{PitZar15}), and its equivariant version has proven to be a very useful technical tool to study the ideal intersection property for $\Cast$-dynamical systems, see for instance \cite{KenScha19}, \cite{GefUrs23}, \cite{KKS25}. Recall from Proposition~\ref{prop: gen part ac on crossed product} that the reduced crossed product $A\crossed G$ associated to a unital partial action on~$A$ carries a canonical generalized unital partial action of~$G$ so that the inclusion $A\subseteq A\crossed G $ is a $G$-morphism. We will use this in what follows. 

\begin{defi}
    Let $(A, G,\alpha)$ be a unital partial $\Cast$-dynamical system. A ucp map $\phi \colon A\rtimes_r G \to I_G(A)$ is called a \emph{pseudo-expectation} for $(A,G,\alpha)$ if~$\phi$ is a $G$-mor\-phism and $\phi\vert_A = \id_A$.
\end{defi}

\begin{rem} As for global $\Cast$-dynamical systems, every $G$-equivariant conditional expectation from $A\crossed G$ onto~$A$ is a pseudo-expectation for $(A,G,\alpha)$ since $A \subseteq I_G(A)$. In particular, the canonical conditional expectation $E_A\colon A\crossed G\to A $ is a pseudo-expectation and hence pseudo-expectations always exist. See \cite[Remark 6.2]{KenScha19}. 
    \end{rem}

In general, pseudo-expectations for $(A, G, \alpha)$  correspond to $G$-equivariant conditional expectations for $(I_G(A), G, I_G(\alpha))$
\begin{prop}\label{prop: 1-1 pseudo-exp with cond exp}
    Let $(A, G, \alpha)$ be a unital partial $\Cast$-dynamical system. Then there is a one-to-one correspondence between pseudo-expectations $\phi \colon A \crossed G \to I_G(A)$ for $(A, G, \alpha)$ and $G$-equivariant conditional expectations $\Phi \colon I_G(A) \crossed G \to I_G(A)$. 
\end{prop}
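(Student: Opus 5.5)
Throughout, I would work with the canonical generalized unital partial action on $I_G(A)\crossed G$ from Proposition~\ref{prop: gen part ac on crossed product}. This applies because $I_G(\alpha)$ is unital by Theorem~\ref{thm: existence inj env}. I would also use the fact that the inclusion $A\crossed G\subseteq I_G(A)\crossed G$ is $G$-equivariant and sends $\nu_g$ to $\nu_g$. This inclusion is a faithful ${}^*$-homomorphism: it intertwines the canonical conditional expectations, and $E_A$ is faithful. The correspondence will be $\Phi\mapsto \Phi\vert_{A\crossed G}$ in one direction and, in the other, the unique extension of a pseudo-expectation, obtained from injectivity and rigidity.

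\emph{From conditional expectations to pseudo-expectations.} Let $\Phi\colon I_G(A)\crossed G\to I_G(A)$ be a $G$-equivariant conditional expectation. I would first check that $\Phi$ is a $G$-morphism. It is $G$-unital because the units $p_g$ of the hereditary subalgebras lie in $I_G(A)$ and are fixed by $\Phi$. It is $G$-equivariant by hypothesis, with $G$-equivariance understood with respect to the partial action implemented by $\nu_g$. Then $\phi:=\Phi\vert_{A\crossed G}$ is a $G$-morphism into $I_G(A)$ with $\phi\vert_A=\id_A$, so $\phi$ is a pseudo-expectation.

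\emph{From pseudo-expectations to conditional expectations.} Let $\phi$ be a pseudo-expectation. By $G$-injectivity of $I_G(A)$, applied to the $G$-embedding $A\crossed G\subseteq I_G(A)\crossed G$, there is a $G$-morphism $\Phi\colon I_G(A)\crossed G\to I_G(A)$ extending $\phi$. The restriction $\Phi\vert_{I_G(A)}\colon I_G(A)\to I_G(A)$ is then a $G$-morphism that fixes $A$, because the inclusion $I_G(A)\subseteq I_G(A)\crossed G$ is a $G$-morphism. By rigidity (Lemma~\ref{lem: rigid implies essential}) it equals $\id_{I_G(A)}$. Hence $I_G(A)$ lies in the multiplicative domain of $\Phi$, and $\Phi$ is a conditional expectation. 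It is $G$-equivariant because it is a $G$-morphism.

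\emph{Bijectivity.} Restriction followed by extension is clearly the identity on pseudo-expectations. The key step, and the main obstacle, is that the extension is unique, i.e.\ that $\Phi$ is determined by $\phi$. Since $I_G(A)$ is in the multiplicative domain, $\Phi(x\nu_g)=x\,\Phi(p_g\nu_g)=x\,\Phi(\nu_g)$ for $x\in I_G(A)_g$. Now $\nu_g=p_g\nu_g\in A\crossed G$, because $p_g\in A$ and $\alpha$ is unital. So $\Phi(\nu_g)=\phi(\nu_g)$, and $\Phi$ is determined on the dense span of the $x\nu_g$. Continuity then gives uniqueness, so restriction is injective on conditional expectations.

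Here unitality is essential: it is exactly what places $\nu_g$ in $A\crossed G$. One should still double-check that the multiplicative-domain computation uses only $p_g\in A\subseteq I_G(A)$ and not any commutation of $\nu_g$ with $I_G(A)$.
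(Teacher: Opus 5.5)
Your proposal is correct and follows the paper's proof essentially step for step. In one direction you restrict. In the other you extend by $G$-injectivity and apply $G$-rigidity, and uniqueness holds because $I_G(A)$ lies in the multiplicative domain, so $\Phi$ is determined by the values $\Phi(\nu_g)$. You also make explicit a point the paper leaves implicit: $\nu_g\in A\crossed G$ by unitality of $\alpha$, so these values are already fixed by $\phi$.
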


\begin{proof} 
    Clearly, if $\Phi \colon I_G(A) \crossed G \to I_G(A)$ is a $G$-equivariant conditional expectation, then its restriction to $A \crossed G$ is a pseudo-expectation.  Conversely, suppose that $\phi \colon A \crossed G \to I_G(A)$ is a pseudo-expectation for $(A, G,\alpha)$. Then by $G$-injectivity it extends to a $G$-morphism $\Phi \colon I_G(A) \crossed G \to I_G(A)$ with $\Phi\vert_A = \id_A$. By $G$-rigidity of the inclusion $A\subseteq I_G(A)$ it follows that $\Phi\vert_{I_G(A)}=\id_{I_G(A)}$, and so $\Phi$ is indeed a $G$-equivariant conditional expectation. Uniqueness of the extension~$\Phi$ follows because every $G$-equivariant conditional expectation $\psi\colon I_G(A) \crossed G \to I_G(A)$ contains $I_G(A)$ in its multiplicative domain, and hence is determined by the values $\{\psi(\nu_g)\}_{g\in G}$, where $g\mapsto \nu_g$ is the canonical partial ${}^*$-representation of~$G$ in $A\crossed G$. 
\end{proof}

Pseudo-expectations naturally give rise to ideals in the reduced crossed product.

\begin{lem}\label{lem: faithful kernel of pseudo-exp}
    Let $(A, G, \alpha)$ be a unital partial $\Cast$-dynamical system and $\phi \colon A \crossed G \to I_G(A)$ a pseudo-expectation. Define 
    \begin{equation*}
        J_\phi := \{x \in A \crossed G \, \vert \, \phi(x^*x) = 0\}.
    \end{equation*}
    Then $  J_\phi $ is an ideal in $A\crossed G$ with $J_\phi \cap A = \trivial$. 
\end{lem}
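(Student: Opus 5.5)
First I would show that $J_\phi$ is a closed left ideal, using the Schwarz (Kadison) inequality for the ucp map $\phi$. For $x,y\in J_\phi$ we have $(x+y)^*(x+y)\le 2(x^*x+y^*y)$, and positivity of $\phi$ gives $\phi((x+y)^*(x+y))=0$. For $b\in A\crossed G$ we have $(bx)^*(bx)=x^*b^*bx\le\|b\|^2x^*x$, so $bx\in J_\phi$. Closedness follows from continuity of $\phi$. I would also record the standard identity
\[
J_\phi=\{x\in A\crossed G \mid \phi(yx)=0 \text{ for all } y\in A\crossed G\}.
\]
This holds because $|\phi(yx)|^2\le\|\phi(yy^*)\|\,\|\phi(x^*x)\|$ by the Cauchy--Schwarz inequality for cp maps; conversely, taking $y=x^*$ gives $\phi(x^*x)=0$. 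Intersection with $A$ is then immediate: if $a\in J_\phi\cap A$, then $0=\phi(a^*a)=a^*a$ because $\phi\vert_A=\id_A$, so $a=0$.

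The main step is right invariance, i.e.\ that $J_\phi$ is two-sided. Since $A\crossed G$ is densely spanned by elements $a\nu_g$ with $a\in A_g$, and $\alpha$ is unital so $\nu_g\in A\crossed G$, it suffices to show that $x\in J_\phi$ implies $xa\in J_\phi$ and $x\nu_g\in J_\phi$. For $a\in A$ we have $a^*x^*xa\le\|x\|^2a^*a$, but this is not the right bound. Instead I would use that $A$ lies in the multiplicative domain of $\phi$, since $\phi\vert_A$ is a $^*$-homomorphism. This gives $\phi(a^*x^*xa)=a^*\phi(x^*x)a=0$.

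For $\nu_g$ I would use the $G$-morphism property. We have $\nu_g^*x^*x\nu_g=\nu_{g^{-1}}(p_gx^*xp_g)\nu_g=\beta_{g^{-1}}(p_gx^*xp_g)$, with $\beta$ the generalized unital partial action of Proposition~\ref{prop: gen part ac on crossed product}. By $G$-equivariance, $\phi(\beta_{g^{-1}}(p_gx^*xp_g))=I_G(\alpha)_{g^{-1}}(\phi(p_gx^*xp_g))$. Since $p_g\in A$ is in the multiplicative domain, $\phi(p_gx^*xp_g)=p_g\phi(x^*x)p_g=0$. Hence $x\nu_g\in J_\phi$. Combining this with the previous step, $x\,a\nu_g\in J_\phi$, and linearity and continuity give $xb\in J_\phi$ for all $b$. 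So $J_\phi$ is a closed two-sided ideal with $J_\phi\cap A=\{0\}$.

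I expect the main obstacle to be this right-invariance step. The key input is that conjugation by $\nu_g$ is precisely the generalized partial action $\beta_{g^{-1}}$ on the corner $p_g(A\crossed G)p_g$, and that $\phi$ intertwines $\beta$ with $I_G(\alpha)$. Everything else is routine multiplicative-domain bookkeeping.
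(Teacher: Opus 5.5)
Your proof is correct and is essentially the argument the paper has in mind: the paper simply defers to \cite[Lemma~6.5]{KenScha19}, with $\nu_g$ replacing the unitaries $\lambda_g$. That argument gets the left-ideal property from positivity, right invariance under $A$ from the multiplicative domain, and right invariance under $\nu_g$ from $\nu_g^*x^*x\nu_g=\beta_{g^{-1}}(p_gx^*xp_g)$ together with $G$-equivariance of $\phi$, exactly as you do (the Cauchy--Schwarz description of $J_\phi$ you record is harmless but not needed).
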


\begin{proof} The proof follows as in \cite[Lemma~6.5]{KenScha19} with the partial ${}^*$-representation $g\mapsto\nu_g$ of~$G$ in $A\crossed G$ playing the role of the unitary representation in the global setting.
\end{proof}

The next corollary characterizes the ideal intersection property in terms of pseudo-expectations.

\begin{cor}\label{cor: pseudo-and-IIP} Let $(A, G, \alpha)$ be a unital partial $\Cast$-dynamical system. Then $(A, G, \alpha)$ has the ideal intersection property if and only if every pseudo-expectation for $(A, G, \alpha)$ is faithful.
\begin{proof} The proof is similar to that of \cite[Theorem~6.6]{KenScha19}. The forward implication follows from Lemma~\ref{lem: faithful kernel of pseudo-exp}. Suppose that every pseudo-expectation for $(A,G,\alpha)$ is faithful and let $J\ideal A\crossed G$ with $J\cap A=\trivial$. Equip the quotient $A\crossed G/J$ with the natural generalized unital partial action induced by the partial ${}^*$-representation $g\mapsto \pi(\nu_g)$. Then the quotient map $\pi\colon A\crossed G\to A\crossed G/J$ is a $G$-morphism that restricts to a $G$-embedding of~$A$, and hence by $G$-injectivity we can find a $G$-morphism $\psi\colon A\crossed G/J\to I_G(A) $ such that $\psi\circ \pi\vert_A=\id_A$. The composition $\phi:=\psi\circ \pi$ is a pseudo-expectation for $(A, G,\alpha)$, and so the assumption gives that $J_\phi=\trivial$. Since $J\subseteq J_\phi$ we deduce that $J=\trivial$ as needed. 
    \end{proof}
    \end{cor}
 
In the next theorem we extend a result of Bryder \cite[Theorem~3.2]{Bryd22} for $\Cast$-dynamical systems to the setting of unital partial actions. We will use this later in \cref{sec: non-unital} to establish the analogous statement for arbitrary partial $\Cast$-dynamical systems.
\begin{thm}\label{thm: IIP-equiv-unital-pa}  Let $(A, G,\alpha)$ be a unital partial $\Cast$-dynamical system. Then the following are equivalent:
\begin{enumerate}
    \item[\rm{(1)}] $(A, G,\alpha)$ has the ideal intersection property;
\item[\rm{(2)}] every unital partial $\Cast$-dynamical system $(B, G,\beta)$ with $$(A, G,\alpha)\subseteq (B,G,\beta)\subseteq (I_G(A),G,I_G(\alpha))$$ has the ideal intersection property;

\item[\rm{(3)}] the unital partial $\Cast$-dynamical system $(I_G(A),G,I_G(\alpha))$ has the ideal intersection property.
\end{enumerate}
\begin{proof} We will prove (1)$\Rightarrow$(3)$\Rightarrow$(2)$\Rightarrow$(1). Suppose that $(A, G,\alpha)$ has the ideal intersection property and let $J\ideal I_G(A)\crossed G$ with $J\cap I_G(A)=\trivial$. Let $\pi\colon I_G(A)\crossed G\to I_G(A)\crossed G/ J$ be the quotient map and equip $I_G(A)\crossed G/ J$ with the generalized unital partial action induced by the partial ${}^*$-representation $g\mapsto \pi(\nu_g)$, so that $\pi$ is a $G$-morphism. Since $A\cap J=\trivial$, the assumption yields that~$\pi$ restricts to a faithful ${}^*$-homomorphism from $A\crossed G$ into $I_G(A)\crossed G/ J$. Hence the canonical conditional expectation on $A\crossed G$ induces a $G$-morphism $E_{\pi(A)}\colon \pi(A\crossed G)\to I_G(A)$ satisfying for $E_{\pi(A)}(\pi(a\nu_g))=\delta_{e,g} a\nu_g$ for $g\in G$ and $a\in A_g$, where $\delta_{e,g}$ is the Kronecker delta. In particular, $E_{\pi(A)}(\nu_g)=0$ if $g\neq e$. By $G$-injectivity this extends to a $G$-morphism $\hat{E}_{\pi(A)}\colon I_G(A)\crossed G/ J\to I_G(A)$. We must have that $\hat{E}_{\pi(A)}\circ \pi\vert_{I_G(A)}=\id_{I_G(A)}$ by $G$-rigidity, and hence a multiplicative domain argument using that $A\subseteq I_G(A)$ is $G$-unital shows that $\hat{E}_{\pi(A)}\circ \pi$ coincides with the canonical conditional expectation $E_{I_G(A)}\colon I_G(A)\crossed G\to I_G(A)$. Since this is faithful we must have $J=\trivial$. 

For the implication (3)$\Rightarrow$(2), our argument is now similar to that in the proof of \cite[Theorem~3.2]{Bryd22}, except that we use kernels of pseudo-expectations, see \cite[Theorem~6.1]{Ken+21}. Let $(B, G,\beta)$ be a unital partial $\Cast$-dynamical system with $G$-embeddings of $\Cast$-algebras $A\subseteq B\subseteq I_G(A)$. Then the inclusion $(B, G,\beta)\subseteq (I_G(A),G, \alpha)$ is essential, which implies that $I_G(A)=I_G(B)$. Let $J\ideal B\crossed G$ with $J\cap B=\trivial$. Reasoning as in the proof of Corollary~\ref{cor: pseudo-and-IIP} we see that the quotient map $\pi\colon B\crossed G\to B\crossed G/J$ induces a pseudo-expectation $\phi\colon B\crossed G\to I_G(A)$ for $(B, G,\beta)$ such that $J\subseteq J_\phi$. Let $\Phi\colon I_G(A)\crossed G\to I_G(A)$ be the unique $G$-equivariant conditional expectation extending~$\phi$. Then $J_{\Phi}$ is an ideal of $I_G(A)\crossed G$ with $J_\Phi\cap I_G(A)=\trivial$. The assumption that $(I_G(A),G,I_G(\alpha))$ has the ideal intersection property implies that $J_{\Phi}=\trivial$, and hence $J=\trivial$ since $J\subseteq J_{\Phi}$. The implication (2)$\Rightarrow$(1) is clear. 
\end{proof}
    \end{thm}

\section{Injective envelopes and enveloping actions}\label{sec: enveloping action}

In this section, we establish a natural relationship between injective envelopes of unital partial $\Cast$-dynamical systems and of their enveloping actions. For a detailed account on enveloping actions of partial actions, we refer to the original work of Abadie~\cite{Aba03}.

We recall the definition of an enveloping action of a partial action in the category of $\Cast$-algebras, see \cite[Definition~2.3]{Aba03}. 

\begin{defi}\label{def: enveloping action}
    Let $(A, G, \alpha)$ be a partial $\Cast$-dynamical system. We call  $(\cC^A, \zeta^A)$ an \emph{enveloping action} for $\alpha=\partacg{A}{\alpha}$ if
    $(\cC^A, G, \zeta^A)$ is a global $\Cast$-dynamical system, $A\ideal \cC_A$, $\alpha$ arises as the restriction of~$\zeta^A$ to~$A$, and $\lincomb\{\zeta_{t}^A(x) \, \vert \, x\in A, t\in G\}$ is dense in $\cC^A$.
\end{defi}
The enveloping action is unique up to canonical ${}^*$-isomorphism \cite[Theorem 2.1]{Aba03}. By \cite[Corollary 4.8]{Fer18} a partial action on a unital $\Cast$-algebra has an enveloping action if and only if it is a unital partial action. In fact if $\alpha=\partacg{A}{\alpha}$ is a unital partial action on $A$, its enveloping action is precisely the $\Cast$-algebra spanned by the orbits of $\iota(A)$ under the left translation action on $\ell^\infty(G,A)$, where $\iota\colon A\to p\ell^\infty(G,A)p$ is the canonical $G$-embedding from Proposition~\ref{prop: G-embed into B}. More precisely, here $\iota(a)(t)=\alpha_{t^{-1}}(p_ta)$ for $a\in A$, $t\in G$, the $\Cast$-algebra $\cC^A$ is $$\cC^A=\overline{\operatorname{span}}\{\tau_g(\iota(a)): a\in A, g\in G\},$$ and the action~$\zeta^A$ on $\cC^A$ is simply the restriction of the left translation action~$\tau$ on $\ell^\infty(G,A)$. Then $\iota(A)$ is an ideal in $\cC^A$, and $\alpha=\partacg{A}{\alpha}$ arises as the restriction of~$\zeta^A$ to the copy of~$A$.  See the proof of \cite[Theorem 4.5]{DokEx05} for further details.

Let $\alpha=\partacg{A}{\alpha}$ be a unital partial action with enveloping action~$(\cC^A, \zeta^A)$ and let $1_A$ denote the unit of~$A$. Since $A = \cC^A 1_A\ideal \cC^A $ we have $I(A) = I(\cC^A)1_A$. Recall that the $\Cast$-dynamical system $(\cC^A, \zeta^A)$ extends uniquely to a $\Cast$-dynamical system $(I(\cC^A), I(\zeta^A))$. Its restriction to $I(\cC^A)1_A$ is a partial action extending~$\alpha$, and hence by uniqueness it coincides with the extension $I(\alpha)=\partacg{I(A)}{I(\alpha)}$ of $\alpha$ as in Proposition~\ref{prop: partac on I(A)}. The next theorem shows that the $G$-injective envelopes of $A$ and of $\cC^A$ exhibit a similar relationship.

\begin{thm}\label{thm: corner of enveloping action}
    Let $(A, G, \alpha)$ be a unital partial $\Cast$-dynamical system with corresponding family of central projections $\{p_g\}_{g\in G}$ and let $(\cC^A, \zeta^A)$ be the enveloping action of~$\alpha$. Let $(I_G(\cC^A), \allowbreak G, I_G(\zeta^A))$ be the injective envelope of $(\cC^A, G, \zeta^A)$. Then $I_G(A) = I_G(\cC^A)1_A $ with the partial action given by the restriction of $(I_G(\cC^A), I_G(\zeta^A))$.
\end{thm}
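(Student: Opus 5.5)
Write $\mathcal I:=I_G(\cC^A)$, with global action $I_G(\zeta^A)$, and $q:=1_A\in A\ideal\cC^A$. The plan is to show that the corner $\mathcal I q$ with the restricted partial action is an injective and rigid extension of $(A,G,\alpha)$. Lemma~\ref{lem: rigid implies essential} then makes it an injective envelope, and the uniqueness part of Theorem~\ref{thm: existence inj env} identifies it with $(I_G(A),G,I_G(\alpha))$.

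First, $q$ is a central projection of $\mathcal I$. It is a projection in $\cC^A$, and $Z(\cC^A)\subseteq Z(\mathcal I)$ by the global analogue of Lemma~\ref{lem:isomorphism-envelopes-AB}(2) (Hamana). Centrality of $q$ in $\cC^A$ follows from $A=\cC^A q$ being an ideal. The restriction of $I_G(\zeta^A)$ to $\mathcal I q$ is therefore a unital partial action with units $q\,I_G(\zeta^A)_g(q)=p_g$. The inclusion $A\subseteq \mathcal I q$ is $G$-equivariant and $G$-unital, so it is a $G$-embedding. Injectivity of $\mathcal I q$ is exactly Corollary~\ref{cor: restriction is partially G-inj}, since $\mathcal I$ is $G$-injective in Hamana's sense by Remark~\ref{rem: global-G-morphism}.

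Rigidity is the main step. Let $\phi\colon \mathcal I q\to\mathcal I q$ be a $G$-morphism with $\phi|_A=\id_A$. I extend $\phi$ to a $G$-equivariant ucp map $\Phi\colon\mathcal I\to\mathcal I$ fixing $\cC^A$, and then apply Hamana's rigidity of $\cC^A\subseteq\mathcal I$ to conclude $\Phi=\id$, hence $\phi=\id$. To build $\Phi$, consider the ucp map $\theta\colon\mathcal I\to\mathcal I q$, $x\mapsto \phi(xq)$. It is $G$-unital in the partial sense and equivariant, because $xq\in \mathcal I q$ and the $p_g$ lie in the multiplicative domain. Next use the $G$-embedding $\iota\colon \mathcal I q\to p\ell^\infty(G,\mathcal I q)p$ of Proposition~\ref{prop: G-embed into B}. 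Since $q$ generates the global action, the map $x\mapsto \bigl(t\mapsto \phi(I_G(\zeta^A)_{t^{-1}}(x)q)\bigr)$ is a $G$-equivariant ucp map $\mathcal I\to\ell^\infty(G,\mathcal I q)$ for the translation action. On $\cC^A$ it agrees with the map $c\mapsto (t\mapsto \zeta^A_{t^{-1}}(c)q)$, which is the canonical identification of $\cC^A$ with the orbit algebra of $\iota(A)$, i.e. the enveloping action described before the theorem. Let $\rho$ denote the inverse $G$-isomorphism from that orbit algebra in $\ell^\infty(G,\mathcal I q)$ onto $\cC^A\subseteq\mathcal I$, given by uniqueness of enveloping actions. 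Global $G$-injectivity of $\mathcal I$ extends $\rho$ to an equivariant ucp map $\hat\rho\colon\ell^\infty(G,\mathcal I q)\to\mathcal I$. Set $\Phi:=\hat\rho\circ(\text{the map above})$. Then $\Phi$ is $G$-equivariant ucp and restricts to the identity on $\cC^A$, so $\Phi=\id_{\mathcal I}$ by Hamana's rigidity.

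It remains to recover $\phi$ from $\Phi$. Since $\Phi=\id$, $q$ is in the multiplicative domain of every map involved, and compressing by $q$ should give $\phi(x)=\Phi(x)q=x$ for $x\in\mathcal I q$. I expect this bookkeeping to be the main obstacle: I must check that $\hat\rho$ composed with the evaluation structure really returns the $e$-coordinate after multiplying by $q$. If that fails, I would instead argue via multiplicative domains that $q\Phi(x)q=\phi(x)$ whenever $x\in\mathcal I q$. With rigidity in hand, the uniqueness in Theorem~\ref{thm: existence inj env} gives a $G$-isomorphism $I_G(A)\cong \mathcal I q$ fixing $A$, which proves the statement.
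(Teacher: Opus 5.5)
Your argument is sound up to $\Phi=\id_{\mathcal I}$, and in substance it is the paper's. Write $\zeta_t$ for $I_G(\zeta^A)_t$, $\Theta_\phi(x):=(t\mapsto\phi(\zeta_{t^{-1}}(x)q))$ for your map, and $\Theta_{\id}(x):=(t\mapsto\zeta_{t^{-1}}(x)q)$. Then $\Theta_\phi$ is the paper's $\tilde\varphi\circ\hat\phi$ with the explicit choice $\hat\phi=\Theta_{\id}$, and your $\hat\rho$ plays the role of the paper's $\psi$.

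\textbf{The gap.} The genuine gap is the last step. You flag it but do not close it, and the multiplicative-domain fallback cannot close it.

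For $x\in\mathcal Iq$ one has $\zeta_{t^{-1}}(x)q=\zeta_{t^{-1}}(xp_t)$. Since $\phi$ is a $G$-morphism, this gives $\Theta_\phi(x)=\iota(\phi(x))$. So $\Phi=\id_{\mathcal I}$ only yields $\hat\rho(\iota(\phi(x)))=x$.

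To get $\phi(x)=x$ you need $\hat\rho\circ\iota=\id_{\mathcal Iq}$. But you only know $\hat\rho$ on the orbit algebra $\Theta_{\id}(\cC^A)$, which contains $\iota(A)$ but not $\iota(\mathcal Iq)$. $\hat\rho$ is an arbitrary equivariant extension of $\rho$. Its multiplicative domain contains $\Theta_{\id}(\cC^A)$ and $p=\iota(q)$, but gives no control over $\hat\rho(\iota(x))$ for $x\notin A$.

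Nor can ``compressing by $q$ returns the $e$-coordinate'' be the reason. If $\hat\rho(f)q=f(e)$ held for all $f\in\ell^\infty(G,\mathcal Iq)$, equivariance would force $\Theta_{\id}\circ\hat\rho=\id$, i.e.\ $\Theta_{\id}$ surjective. That already fails for global actions of nontrivial groups.

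\textbf{The fix.} The missing idea is a second application of Hamana's rigidity, with the same $\hat\rho$.
\begin{enumerate}
\item Since $q$ is central, $\Theta_{\id}$ is a unital $G$-equivariant ${}^*$-homomorphism $\mathcal I\to\ell^\infty(G,\mathcal Iq)$. It agrees with $\Theta_\phi$ on $\cC^A$, because $\zeta_{t^{-1}}(c)q\in A$ for $c\in\cC^A$.
\item Hence $\hat\rho\circ\Theta_{\id}$ is a $G$-equivariant ucp map fixing $\cC^A$, so $\hat\rho\circ\Theta_{\id}=\id_{\mathcal I}$.
\item As $\Theta_{\id}|_{\mathcal Iq}=\iota$, this gives $\hat\rho\circ\iota=\id_{\mathcal Iq}$.
\item Then $x=\Phi(x)=\hat\rho(\iota(\phi(x)))=\phi(x)$.
\end{enumerate}

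\textbf{How the paper avoids this.} The paper builds the left-inverse property in from the start. It extends $\iota$ itself, rather than its inverse on the orbit algebra, to $\hat\phi\colon I_G(\cC^A)\to\ell^\infty(G,B)$. $G$-essentiality shows $\hat\phi$ is completely isometric, and $G$-injectivity supplies a left inverse $\psi$. So $\psi\circ\iota=\id_B$ holds automatically on all of $B$.
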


\begin{proof} Set $B:=I_G(\cC^A)1_A$ and let $\beta$ be the partial action given by the restriction of $I_G(\zeta^A)$ to $B$. Let $(B, G,\beta)$ be the corresponding unital partial $\Cast$-dynamical system. Since $A\ideal \cC^A$ and $\cC^A \subseteq I_G(\cC^A)$, we obtain an inclusion of $\Cast$-algebras $A\subseteq B$. To see that this embedding is $G$-unital notice that $I_G(\zeta^A)$ extends~$\zeta^A$, and the partial action on~$A$ is the restriction of~$\zeta^A$. Hence the unit of $B_g$ is $\zeta^A_g(1_A)1_A=p_g,$ showing that the inclusion $A\subseteq B$ is $G$-unital. A similar reasoning gives $G$-equivariance.  
Since $I_G(\cC^A)$ is $G$-injective in the category of $\Cast$-dynamical systems and $B$ is the corner determined by a central projection in~$I_G(\cC^A)$, Corollary~\ref{cor: restriction is partially G-inj} yields that $(B, G,\beta)$ is injective in the category of generalized unital partial $\Cast$-dynamical systems. Hence from above it is an injective extension of $(A, G, \alpha)$. We will prove that this extension is also rigid. 

   Let $\varphi \colon B \to B$ be a $G$-morphism with $\varphi\vert_A = \id_A$. Consider the ucp map $\Tilde{\varphi} \colon \ell^\infty(G,B) \allowbreak\to \ell^\infty(G,B)$ defined by $\Tilde{\varphi}(f)(t) := \varphi(f(t))$ for $f\in \ell^\infty(G,B)$ and $t\in G$. Then $\tilde{\varphi}$ is a $G$-morphism since for $f\in \ell^\infty(G,B)$ and $t\in G$ we have \begin{align*}
    \Tilde{\varphi}(\tau_g(f))(t) = \varphi(f(g^\inv t)) = \Tilde{\varphi}(f)(g^\inv t) = \tau_g(\Tilde{\varphi}(f))(t).
    \end{align*}  Let $\iota \colon B \to p\ell^\infty(G,B)p$ be the canonical $G$-embedding. So $\iota(b)(t) = \beta_{t^\inv}(b p_{t})$ for $b\in B$ and $t\in G$, and we see that $\Tilde{\varphi}\circ\iota =\iota\circ\varphi$ because $\varphi$ is a $G$-morphism. In particular, $\Tilde{\varphi}\vert_{\iota(A)} = \id_{\iota(A)}$. Combining this with $G$-equivariance we deduce that $\Tilde{\varphi}$ restricts to the identity map also on the $\Cast$-algebra $\operatorname{Orb}_G(\iota(A))$ spanned by the orbits of~$\iota(A)$ under the left translation action. 

  Now let $\operatorname{Orb}_G(B)$ and $\operatorname{Orb}_G(\iota(B))$ be the $\Cast$-algebras spanned by the orbits of~$B$ in $I_G(\cC^A)$ and of $\iota(B)$ in $\ell^\infty(G, B)$, respectively. By uniqueness of the enveloping action of~$\beta$, there exists a unique ${}^*$-isomorphism $\phi\colon\operatorname{Orb}_G(B)\to\operatorname{Orb}_G(\iota(B))$ satisfying $\phi\vert_B=\iota$ and $\phi\circ I_G(\zeta^A)_g=\tau_g\circ\phi$ for all $g\in G$. This extends to a $G$-equivariant ucp map $\hat{\phi}\colon I_G(\cC^A)\to \ell^\infty(G,B)$ because $\ell^\infty(G,B)$ is $G$-injective. By $G$-essentiality $\hat{\phi}$ is completely isometric since $\hat{\phi}\vert_{\cC^A}$ is. Hence applying $G$-injectivity of $I_G(\cC^A)$ we obtain a $G$-morphism $\psi \colon \ell^\infty(G,B) \to I_G(\cC^A)$ with $\psi\circ\hat{\phi}=\id_{I_G(\cC^A)}.$ In particular, we have $\psi\circ\iota=\id_B$.

It follows from above that the composition $\psi\circ\Tilde{\varphi}\circ\hat{\phi}$ is also a $G$-morphism on~$ I_G(\cC^A)$ satisfying $\psi\circ\Tilde{\varphi}\circ\hat{\phi}\vert_{\cC^A}=\id_{\cC^A}$. We deduce that $\psi\circ\Tilde{\varphi}\circ\hat{\phi}=\id_{I_G(\cC^A)}$ by $G$-rigidity of the inclusion $\cC^A\subseteq I_G(\cC^A)$. Then for $b\in B$ we obtain $$\varphi(b)=\psi(\iota(\varphi(b)))=\psi(\Tilde{\varphi}(\iota(b)))=b,$$ proving that $\varphi=\id_B$ as needed.
\end{proof}

Theorem~\ref{thm: corner of enveloping action} can be very helpful to deduce important results for injective envelopes of partial $\Cast$-dynamical systems from the machinery developed for global actions. We illustrate this in the next corollary with a partial generalization of \cite[Theorem 3.4]{Ham85}. We will establish the full statement of \cite[Theorem 3.4]{Ham85} for arbitrary partial $\Cast$-dynamical systems in Theorem~\ref{thm: G-essential crossed prod in I(A cross G)-non-unital}.

\begin{cor}\label{cor: G-essential crossed prod in I(A cross G)}
 Let $(A,G,\alpha)$ and $(B, G,\beta)$ be unital partial $\Cast$-dy\-na\-mi\-cal  systems with a $G$-embedding of $\Cast$-algebras $A\subseteq B$. Suppose that $(A,G,\alpha)\subseteq (B, G,\beta)\subseteq(I_G(A),G,I_G(\alpha))$. Then $A\crossed G \subseteq B\crossed G \subseteq I(A\crossed  G)$. In particular, $I(A\crossed G)=I(I(A)\crossed G)=I(I_G(A)\crossed G)$.
\end{cor}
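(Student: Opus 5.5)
The plan is to reduce to Hamana's theorem \cite[Theorem 3.4]{Ham85} for global actions via Theorem~\ref{thm: corner of enveloping action}. Let $(\cC^A,\zeta^A)$ be the enveloping action of $\alpha$. By Theorem~\ref{thm: corner of enveloping action}, $I_G(A)=I_G(\cC^A)1_A$, and the partial action on it is the restriction of $I_G(\zeta^A)$. Since $1_A$ is central in $I_G(\cC^A)$, we get a corner inclusion
\[
I_G(A)\crossed G \;\cong\; 1_A\bigl(I_G(\cC^A)\crossed G\bigr)1_A .
\]
This is the standard identification of the crossed product of a restriction to a central projection with a full corner. I would verify it by checking that the corner $1_A(I_G(\cC^A)\crossed G)1_A$ is spanned by the elements $x\lambda_g 1_A = x\,\zeta_g(1_A)\lambda_g$ with $x\in 1_A I_G(\cC^A)\zeta_g(1_A)$, and that the restricted canonical conditional expectation is faithful. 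The same identification holds for $A\subseteq\cC^A$ and for $B$, where I use that $B\subseteq I_G(A)$ gives $B=B1_A$.

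For $B$, let $\cC^B$ be the $\Cast$-subalgebra of $I_G(\cC^A)$ generated by the orbits of $B$; by uniqueness of enveloping actions this is the enveloping action of $\beta$. Then
\[
\cC^A\subseteq \cC^B\subseteq I_G(\cC^A)
\]
as $G$-$\Cast$-algebras. Hamana's \cite[Theorem 3.4]{Ham85} for global systems gives
\[
\cC^A\crossed G\subseteq \cC^B\crossed G\subseteq I_G(\cC^A)\crossed G\subseteq I(\cC^A\crossed G).
\]
Cutting down by the central projection $1_A$ of $\cC^A$, which is a projection in $\cC^A\crossed G$, yields
\[
A\crossed G\subseteq B\crossed G\subseteq I_G(A)\crossed G\subseteq 1_A I(\cC^A\crossed G)1_A .
\]
For a hereditary subalgebra (indeed a corner) $D=1_A(\cC^A\crossed G)1_A$ of $\cC^A\crossed G$, the Hamana fact recalled in Subsection~\ref{subsec: injective-alg} gives $I(D)\cong 1_AI(\cC^A\crossed G)1_A$ compatibly with the inclusions, since the supremum of the approximate unit of $D$ is $1_A$. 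Hence $I(A\crossed G)=1_AI(\cC^A\crossed G)1_A$, which contains $B\crossed G$.

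The final claim follows. Applying the result with $B=I_G(A)$ gives $A\crossed G\subseteq I_G(A)\crossed G\subseteq I(A\crossed G)$, so $I(I_G(A)\crossed G)=I(A\crossed G)$ by essentiality. Applying it with $B=I(A)$, which lies between $A$ and $I_G(A)$ by Lemma~\ref{lem:isomorphism-envelopes-AB}, gives $I(I(A)\crossed G)=I(A\crossed G)$.

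The main obstacle is compatibility of the identifications. The reduced crossed products must be identified as corners inside the single ambient algebra $I(\cC^A\crossed G)$, and the hereditary embedding $I(D)\cong 1_AI(\cC^A\crossed G)1_A$ must restrict to the given inclusion of $D$. I would handle this by realizing all crossed products faithfully on $\hilb\otimes\ell^2(G)$ via a faithful representation of $I_G(\cC^A)$ and compressing by $1_A\otimes 1$, using that the conditional expectations match.
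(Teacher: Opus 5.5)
Your proof is correct and follows essentially the same route as the paper. You identify $I_G(A)\crossed G$ with the corner $1_A(I_G(\cC^A)\crossed G)1_A$ via Theorem~\ref{thm: corner of enveloping action}, apply \cite[Theorem 3.4]{Ham85} to the enveloping action, and then use $1_AI(\cC^A\crossed G)1_A=I(1_A(\cC^A\crossed G)1_A)=I(A\crossed G)$. Two things are superfluous: the detour through $\cC^B$, since $B\crossed G\subseteq I_G(A)\crossed G$ follows directly from the equivariant inclusion $B\subseteq I_G(A)$ (which is why the paper reduces at once to $B=I_G(A)$), and your description of $1_A(I_G(\cC^A)\crossed G)1_A$ as a \emph{full} corner, which can fail in general but is never used.
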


\begin{proof}
Suppose that $A \subseteq B \subseteq I_G(A)$. Note that it suffices to show that $I_G(A) \crossed G \subseteq I(A\crossed G)$ as then $A \crossed G \subseteq B\crossed G \subseteq I_G(A)\crossed G \subseteq I(A\crossed G)$. Let $(\cC^A, \zeta^A)$ be the enveloping action of $\alpha=\partacg{A}{\alpha}$. By Theorem~\ref{thm: corner of enveloping action} we have $I_G(A) = I_G(\cC^A)1_A$ and thus $I_G(A) \crossed G = 1_A(I_G(\cC^A) \crossed G)1_A$. Using now that $I_G(\cC^A)\crossed G\subseteq I(\cC^A\crossed G)$ by \cite[Theorem 3.4]{Ham85} we obtain the inclusion $1_A(I_G(\cC^A) \crossed G)1_A \subseteq 1_A I(\cC^A \crossed G)1_A = I(1_A (\cC^A \crossed G)1_A) = I(A\crossed G)$, giving $I_G(A) \crossed G \subseteq I(A\crossed G)$ as needed.
\end{proof}

We will end this section with the equivalence between the ideal intersection property for a unital partial action and for its enveloping action. We believe this is known since the corresponding reduced crossed products are Morita equivalent by a theorem of Abadie~\cite[Theorem 3.3]{Aba03}. Since we have not found the explicit statement in the literature we include the argument below, which utilizes the bijective correspondence between the lattice of ideals for Morita equivalent $\Cast$-algebras.

\begin{prop}\label{prop: enveloping IIP iff partac IIP}
    Let $\alpha=\partacg{A}{\alpha}$ be a unital partial action and $(\cC^A, \zeta^A)$ its enveloping action. Then the partial $\Cast$-dynamical system $(A, G,\alpha)$ has the ideal intersection property if and only if $(\cC^A, G, \zeta^A)$ does. 
\end{prop}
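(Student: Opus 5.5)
We plan to realize $A\crossed G$ as a full corner of $\cC^A\crossed G$ and transport ideals through the corner.

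Since $A=\cC^A1_A$ is an ideal in $\cC^A$ and $\alpha$ is the restriction of $\zeta^A$, the projection $1_A\in M(\cC^A)\subseteq M(\cC^A\crossed G)$ satisfies
\[
1_A(\cC^A\crossed G)1_A\cong A\crossed G,
\]
with $A\subseteq \cC^A$ matching the canonical copy: the element $a\lambda_g1_A$ with $a\in A_g=A\zeta^A_g(A)$ corresponds to $a\nu_g$. Moreover this corner is full, because the ideal it generates contains $\zeta^A_g(1_A)\cC^A\zeta^A_g(1_A)$ for every $g$, via conjugation by $\lambda_g$, and these span a dense subset of $\cC^A$ by density of the orbits. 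This is the content of \cite[Theorem 3.3]{Aba03}. Consequently, the map $J\mapsto 1_AJ1_A$ is a lattice isomorphism from the ideals of $\cC^A\crossed G$ onto the ideals of $A\crossed G$, with inverse $K\mapsto \overline{(\cC^A\crossed G)K(\cC^A\crossed G)}$.

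Now we compare the intersections with the coefficient algebras. For an ideal $J\ideal \cC^A\crossed G$ we claim that $J\cap\cC^A=\trivial$ if and only if $1_AJ1_A\cap A=\trivial$. For the forward direction, note that $1_AJ1_A\cap A\subseteq J\cap \cC^A$. For the converse, $J\cap\cC^A$ is a $\zeta^A$-invariant ideal $I$ of $\cC^A$, invariant because $J$ is stable under conjugation by the unitaries $\lambda_g$. If $I\neq\trivial$, then since $\cC^A$ is the closed span of the sets $\zeta^A_g(A)$, some $\zeta^A_g(A)I\neq 0$. By invariance, $AI=A\cap I\neq 0$, and $A\cap I=1_AI1_A\subseteq 1_AJ1_A\cap A$.

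Combining the two steps: if $(A,G,\alpha)$ has the ideal intersection property and $J\cap\cC^A=\trivial$, then $1_AJ1_A=\trivial$, and hence $J=\trivial$ by fullness. Conversely, given $K\ideal A\crossed G$ with $K\cap A=\trivial$, we take $J$ to be the ideal generated by $K$. Then $1_AJ1_A=K$, so $J\cap\cC^A=\trivial$ by the previous paragraph, hence $J=\trivial$ and therefore $K=\trivial$.

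The main obstacle is the first step: verifying carefully that the corner identification is compatible with the reduced norms, i.e. that it intertwines the canonical conditional expectations. We would handle this by checking that the restriction of $E_{\cC^A}$ to the corner is a faithful conditional expectation onto $A$ that kills $a\nu_g$ for $g\neq e$, and then invoking the characterization of $A\crossed G$ by such an expectation.
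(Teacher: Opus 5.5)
Your proposal is correct and follows essentially the same route as the paper. Both identify $A\crossed G$ with the full corner $1_A(\cC^A\crossed G)1_A$; the paper cites \cite[Theorem 28.8]{Exel17} for this, which also settles the reduced-norm compatibility you flag as the main obstacle. Both then use fullness, together with the invariance and orbit-density argument, to show that $J\cap\cC^A=\trivial$ if and only if $1_AJ1_A\cap A=\trivial$.
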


\begin{proof}
By \cite[Theorem 28.8]{Exel17} the crossed product $A\crossed G$ is a full hereditary $\Cast$-subalgebra of $\cC^A\crossed G$, and so $1_A$ is a full projection in $\cC^A\crossed G$. Now let $I\ideal \cC^A \crossed G $ be an ideal.  Observe that $I\cap (A\crossed G)=1_AI1_A$ and since $1_A$ is a full projection, it follows that $I$ coincides with the ideal of $\cC^A \crossed G$ generated by $1_AI1_A$ because $$I=\langle 1_A\rangle I\langle 1_A \rangle\subseteq \langle 1_A I1_A\rangle\subseteq I,$$ where we use the notation $\langle S\rangle$ for the ideal in $\cC^A \crossed G$ generated by $S\subseteq \cC^A \crossed G $.   In particular, it follows that $I\neq \trivial$ if and only if $I\cap (A\crossed G)\neq \trivial$.

 Suppose now that $(\cC^A,\zeta^A)$ has the ideal intersection property and let $J \ideal A\crossed G$ with $J \cap A = \trivial$. Set $I := \langle J \rangle \ideal \cC^A \crossed G$. From above we have $1_AI1_A=J$. We claim that $I \cap \cC^A = \trivial$. Indeed, looking for a contradiction suppose $0 \neq a \in I \cap \cC^A$. Since the orbits of $A$ span a dense subalgebra of~$\cC^A$ we can find $g\in G$ such that~$a\zeta^A_g(1_A)\neq 0$. Since $I$ is an ideal, it is invariant under the canonical inner action of~$G$ on $ \cC^A \crossed G$ extending $\zeta^A$, and hence we may in fact assume that $a\in I\cap  \cC^A $ satisfies $a1_A\neq 0$. Then $0\neq 1_Aa^*a1_A\in 1_AI1_A\cap A=J\cap A$, a contradiction. Thus we must have $I\cap \cC^A=\trivial $. By assumption $I=\trivial$, giving $J=1_AI1_A=\trivial$ as needed.%

Conversely, suppose that $(A, G,\alpha)$ has the ideal intersection property and let $I\ideal \cC^A \crossed G$ be such that $I \cap \cC^A = \trivial$. Then  $J:=1_AI1_A$ satisfies $J\cap A=\trivial$. By assumption this implies $J=\trivial$, and hence $I=\trivial$.
\end{proof}

\section{Injective envelopes for non-unital partial actions}\label{sec: non-unital}

In this section we introduce injective envelopes for arbitrary partial $\Cast$-dynamical systems. The main idea is to consider an \emph{essential unitization} of a partial action $\alpha=\partacg{A}{\alpha}$, meaning a unital partial $\Cast$-dynamical system $(B, G,\beta)$ with $G$-equivariant inclusions of $\Cast$-algebras $A\subseteq B\subseteq I(A)$. In particular, we define the $G$-injective envelope $I_G(A)$ of~$A$ to be the $G$-injective envelope of $I(A)$ in the category of generalized unital partial $\Cast$-dynamical systems, and establish several desirable properties for the inclusion $A\subseteq I_G(A)$ in analogy to the unital setting.

\subsection{Injective envelopes and the ideal intersection property}

Recall from Proposition~\ref{prop: partac on I(A)} that a not necessarily unital partial action $\alpha=\partacg{A}{\alpha}$ extends uniquely to a unital partial action $I(\alpha)=\partacg{I(A)}{I(\alpha)}$ on $I(A)$ such that $I(A)_g=I(A_g)$ and the inclusion $A\subseteq I(A)$ is $G$-equivariant.
\begin{defi}\label{def: G-envelope-non-unital}
    Let $(A, G,\alpha)$ be a partial $\Cast$-dynamical system and $I(\alpha)$ the corresponding unital partial action on the injective envelope of~$A$. We define the \emph{injective envelope} of $(A, G, \alpha)$ to be the injective envelope of $(I(A), G, I(\alpha))$ in the category of generalized unital partial $\Cast$-dynamical systems, i.e. $I_G(A) := I_G(I(A))$ and $I_G(\alpha):=I_G(I(\alpha))$.
\end{defi}

\begin{rem} It follows from Lemma~\ref{lem:isomorphism-envelopes-AB} that every unital partial $\Cast$-dynamical system $(B, G, \beta)$ with $(A, G, \alpha)\subseteq (B, G, \beta)\subseteq (I(A), G, I(\alpha)) $ satisfies $I_G(B)=I_G(I(A))$. Hence there is nothing special in our choice of the essential unitization $I(\alpha)=\partacg{I(A)}{I(\alpha)}$ in Definition~\ref{def: G-envelope-non-unital}.
    \end{rem}
    
 Our next goal is to show that for an arbitrary partial action $\alpha$, the partial $\Cast$-dynamical system $(A, G, \alpha)$ has the ideal intersection property if and only if $(I_G(A), G, I_G(\alpha))$ does so. The next proposition combined with Theorem~\ref{thm: IIP-equiv-unital-pa} gives the forward implication.

\begin{prop}\label{prop: A IIP implies I(A) IIP}
    Let $(A, G, \alpha)$ be a partial $\Cast$-dynamical system. If $(A, G, \alpha)$ has the ideal intersection property, then so does $(I(A), G, I(\alpha))$.
\end{prop}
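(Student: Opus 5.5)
Given $J\ideal I(A)\crossed G$ with $J\cap I(A)=\trivial$, I will show $J\cap (A\crossed G)$ is an ideal of $A\crossed G$ meeting $A$ trivially, hence zero, and then deduce $J=\trivial$. The inclusion $A\crossed G\subseteq I(A)\crossed G$ is legitimate: the restriction of the canonical conditional expectation $E_{I(A)}$ to $A\crossed G$ is a faithful conditional expectation onto $A$ with the defining property, so it is the reduced crossed product. Since $A\cap J\subseteq I(A)\cap J=\trivial$, the hypothesis gives $J\cap(A\crossed G)=\trivial$. Consequently the quotient map $\pi\colon I(A)\crossed G\to (I(A)\crossed G)/J$ is faithful on $A\crossed G$, and also faithful on $I(A)$.

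It remains to show $J=\trivial$. I plan to use the faithful expectation $E_{I(A)}$ and show $E_{I(A)}(J)=0$. The idea mirrors Theorem~\ref{thm: IIP-equiv-unital-pa}, (1)$\Rightarrow$(3), but only at the level of $I(A)$ rather than $I_G(A)$. Represent $(I(A)\crossed G)/J$ faithfully on $B(\hilb)$. Define $\theta:=E_A\circ(\pi|_{A\crossed G})^{-1}\colon \pi(A\crossed G)\to A\subseteq I(A)$, which is ucp. By injectivity of $I(A)$ (as an operator system), extend $\theta$ to a ucp map $\Theta\colon B(\hilb)\to I(A)$. Then $\Theta\circ\pi|_{I(A)}$ is a ucp map $I(A)\to I(A)$ that restricts to the identity on $A$. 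By rigidity of $A\subseteq I(A)$, it is $\id_{I(A)}$. Hence $\pi(I(A))$ lies in the multiplicative domain of $\Theta$.

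Next I want $\Theta(\pi(x\nu_g))=0$ for $g\neq e$ and $x\in I(A_g)$. Using the multiplicative domain, $\Theta(\pi(x\nu_g))=x\,\Theta(\pi(m_g))$ in a suitable sense. The difficulty is that $\nu_g$ need not lie in $A\crossed G$ when $\alpha$ is non-unital. I will instead approximate: take an approximate unit $(u_\mu)$ of $A_g$. Then for $x\in I(A_g)$ and $a\in A_{g^{-1}}$ we have $x u_\mu\nu_g\cdot$ terms, and
\[
\Theta(\pi(x\,u_\mu\nu_g))=x\,\Theta(\pi(u_\mu\nu_g))=x\,E_A(u_\mu\nu_g)=0 .
\]
So it suffices to know that $\pi(x u_\mu\nu_g)\to\pi(x\nu_g)$ in a sense that $\Theta$ respects. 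This is the \textbf{main obstacle}, since $x\in I(A_g)$ need not be approximable from $A_g$ in norm.

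To handle it, I would use that $\Theta$ is $I(A)$-bimodular. The map $y\mapsto\Theta(\pi(y\nu_g))$ on $I(A_g)$ is then a right $I(A_g)$-module map into $I(A)$, vanishing on the dense-in-order ideal $A_g$ (multiply on the right by $\alpha$-translates). Concretely, writing $z:=\Theta(\pi(p_g\nu_g))$, where $p_g\nu_g\in I(A)\crossed G$ because $I(\alpha)$ is unital, we get $z\,\alpha_g(a)=\Theta(\pi(\nu_g a))=\Theta(\pi(\alpha_g(a)\nu_g))\cdot$ and this is $0$ for $a\in A_{g^{-1}}$. Thus $z$ annihilates $A_g$ on the right, and also $z\in I(A)p_g$. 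Since $A_g$ is essential in $I(A_g)=p_gI(A)$, i.e.\ $A_g^\perp\cap p_gI(A)=0$ by the Hamana facts recalled in Subsection~\ref{subsec: injective-alg}, we get $z^*$ annihilating $A_g$ and lying in $p_gI(A)$, hence $z=0$. Therefore $\Theta\circ\pi=E_{I(A)}$ on spanning elements, and hence everywhere by continuity. Since $J\subseteq\ker(\Theta\circ\pi)$ and $E_{I(A)}$ is faithful, $J=\trivial$.
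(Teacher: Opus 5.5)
Your proof is correct and is essentially the paper's argument. You extend $E_A\circ(\pi|_{A\crossed G})^{-1}$ to a ucp map into $I(A)$, use rigidity to put $\pi(I(A))$ in its multiplicative domain, and show that $z=\Theta(\pi(\nu_g))$ vanishes for $g\neq e$. The paper does that last step directly as $z^*z=z^*p_gz=\sup_\mu z^*u_\mu z=\sup_\mu z^*\Theta(\pi(u_\mu\nu_g))=0$, which is exactly the annihilator fact you invoke. Note that what you need is the annihilator of $A_g$ inside $I(A)$, not $A_g^\perp\subseteq A$; it follows from $p_g=\sup_\mu u_\mu$ in this way.

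One slip to fix: the multiplicative-domain identity is $\Theta(\pi(\alpha_g(a)\nu_g))=\alpha_g(a)\,z$ (equivalently $\Theta(\pi(\nu_g a))=z\,a$ for $a\in A_{g^{-1}}$), not $z\,\alpha_g(a)$. So what you actually obtain is $A_gz=0$ together with $z=p_gz$, which is precisely what your final step uses.
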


\begin{proof} The argument is similar to that in the proof of (1)$\Rightarrow$(3) in Theorem~\ref{thm: IIP-equiv-unital-pa}, except that $A\crossed G$ does not necessarily contain the family of partial isometries $\{\nu_g\}_{g\in G}$ in $I(A)\crossed G$. Suppose $(A, G, \alpha)$ has the ideal intersection property and take $J \ideal I(A) \crossed G$ with $I(A) \cap J = \trivial$. Then $J \cap A = \trivial$ and thus the assumption yields $J \cap (A \crossed G) = \trivial$. Hence the quotient map $\pi\colon I(A)\crossed G\to I(A)\crossed G/J$ restricts to a faithful ${}^*$-homomorphism from $A\crossed G$ to $I(A)\crossed G/J$.

It follows that there is a conditional expectation $E_{\pi(A)}\colon \pi(A\crossed G)\to A$ satisfying $E_{\pi(A)}(\pi(a\nu_g))=\delta_{e,g}a$ for $g\in G$ and $a\in A$. Since $I(A)$ is injective, there is a ucp map $\hat{E}_{\pi(A)}\colon I(A)\crossed G/J\to I(A)$ extending~$E_{\pi(A)}$. We will show that the composition $\phi:=\hat{E}_{\pi(A)}\circ\pi$ coincides with the canonical conditional expectation $E_{I(A)}\colon I(A)\crossed G\to I(A)$.

First, notice that $\phi\vert_{I(A)}=\id_{I(A)}$ by rigidity of the inclusion $A\subseteq I(A)$. In particular, $I(A)$ is contained in the multiplicative domain of $\phi$. Thus $\phi (\nu_g) = p_g \phi(\nu_g)$ for all $g\in G$, where $p_g$ is the unit of~$I(A)_g$. Let $(u_\mu)_\mu$ be an increasing approximate identity for~$A_g$. Then $\sup_\mu u_\mu = p_g$ in $I(A)$ and we obtain 
    \begin{align*}
        \phi (\nu_g)^* \phi(\nu_g) = \phi(\nu_g)^* (\sup_\mu u_\mu) \phi (\nu_g) =  \sup_\mu \big(\phi(\nu_g)^* u_\mu \phi(\nu_g)\big) =  \sup_\mu \big(\phi (\nu_g)^* \phi(u_\mu\nu_g)\big).
    \end{align*}
 This shows that $\phi(\nu_g) = \delta_{e,g}$ for all $g\in G$ because $\phi(u_\mu \nu_g)=0$ for every~$\mu$ if $g\neq e$. Thus $\phi$ is the canonical conditional expectation, which implies that $J=\trivial$. 
\end{proof}

Recall that if $(A, G, \alpha)$ is a $\Cast$-dynamical system and $A$ is non-unital, then the action on~$A$ extends uniquely to an action on the minimal unitization~$\widetilde{A}$ of~$A$, and $A\crossed G$ is an essential ideal of $\widetilde{A}\crossed G$, see \cite[Lemma 7.1]{GefUrs23}. In particular, $A\subseteq A\crossed G$ has the ideal intersection property if and only if $\widetilde{A}\subseteq \widetilde{A}\crossed G$ does so. It follows from results in the unital setting that the ideal intersection property for $(A, G, \alpha)$ is equivalent to that of $(I(A), G, I(\alpha))$, and hence also of $(I_G(A), G, I_G(\alpha))$, see \cite[Theorem~3.2]{Bryd22}.

For a non-unital partial action $\alpha=\partacg{A}{\alpha}$ the situation is more delicate, and it will take us a considerable effort to show that the ideal intersection property for $I(A)\subseteq I(A)\crossed G$ indeed implies the ideal intersection property for $A\subseteq A\crossed G$.

It will be crucial for us to work with unitizations of $\alpha=\partacg{A}{\alpha}$ that are small, in the sense that the underlying $\Cast$-algebra is generated by orbits of~$A$ (or of $\widetilde{A}$ when $A$ is non-unital) as a $\Cast$-algebra. The following lemma clarifies what we mean by orbits in this setting.

\begin{lem}\label{lem:partial-action-on-orbits} Let $A$ be a unital $\Cast$-algebra and $\alpha=\partacg{A}{\alpha}$ a partial action on~$A$. Suppose that $(C, G, \gamma)$ is a unital partial $\Cast$-dynamical system with corresponding family of central projections $\{p_g\}_{g\in G}$. Let $\pi\colon A\to C$ be a $G$-equivariant ${}^*$-homomorphism and set $$\operatorname{Orb}^\gamma_G(\pi(A)):=\Cast(\{\gamma_g(\pi(a)p_{g^{-1}})\,\vert\,a\in A, g\in G\}).$$ Then $\operatorname{Orb}^\gamma_G(\pi(A))$ is $\gamma$-invariant and hence $\gamma=\partacg{C}{\gamma}$ restricts to a unital partial action on $\operatorname{Orb}^\gamma_G(\pi(A))$ such that $\pi\colon A\to \operatorname{Orb}^\gamma_G(\pi(A)) $ is $G$-equivariant.

\begin{proof} Observe that $ \operatorname{Orb}^\gamma_G(\pi(A))\cap C_g=p_g\operatorname{Orb}^\gamma_G(\pi(A))$, and hence all we need to show is that $\gamma_g(\operatorname{Orb}^\gamma_G(\pi(A))p_{g^{-1}})\subseteq \operatorname{Orb}^\gamma_G(\pi(A))p_g$ for all $g\in G$.

Let $h\in G$ and $a\in A.$ For $g\in G$ we compute \begin{align*}
    \gamma_g(\gamma_h(\pi(a)p_{h^\inv})p_{g^\inv}) &= \gamma_g(\gamma_h(\pi(a)p_{h^\inv})p_h p_{g^\inv}) p_g \\ &= \gamma_g(\gamma_h(\pi(a)p_{h^\inv})\gamma_h (p_{h^\inv} p_{h^\inv g^\inv}) ) p_g \\& = \gamma_{gh} (\pi(a) p_{h^\inv} p_{(gh)^\inv}) p_g \\&= \gamma_{gh} (\pi(a)p_{(gh)^\inv})\gamma_{gh}( p_{h^\inv} p_{(gh)^\inv}) p_g \\&=\gamma_{gh}(\pi(a)p_{(gh)^\inv})p_g\in \operatorname{Orb}_G(\pi(A))p_g.
    \end{align*}
Since $\operatorname{Orb}^\gamma_G(\pi(A))$ is the closed linear span of products of this form and the $p_g$s are central projections in~$C$, we conclude that $\operatorname{Orb}^\gamma_G(\pi(A))$ is $\gamma$-invariant as needed.
\end{proof}
    \end{lem}
    
We will require the following lemma.

\begin{lem}\label{lem: Mult map for inclusion}
   Let $A$ be a unital $\Cast$-algebra and $\alpha=\partacg{A}{\alpha}$ a partial action on~$A$. Suppose that $(C, G, \gamma)$ is a unital partial $\Cast$-dynamical system with corresponding family of central projections $\{p_g\}_{g\in G}$ and a $G$-equivariant inclusion of $\Cast$-algebras $A\subseteq C$. Assume that $C=\operatorname{Orb}^\gamma_G(A)$ and let $\Pi \colon (C\crossed G)^\ddual \to B(\hilb)$ be a ucp map that is multiplicative on $A\crossed G$. Let $g\mapsto \nu_g$ be the canonical partial ${}^*$-representation of~$G$ in~$C\crossed G$ implementing~$\gamma$ and suppose that $\{q_g \nu_g \vert g\in G\}$ is in the multiplicative domain of~$\Pi$, where $q_g$ denotes the unit of $A_g^\ddual$. If $\Pi(p_g) = \Pi(q_g)$ for all $g\in G$, then~$\Pi$ is multiplicative on~$C\crossed G$. 
\end{lem}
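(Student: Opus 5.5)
The plan is to use the multiplicative domain $\dom_\Pi$ of $\Pi$ on $(C\crossed G)^\ddual$. Since $\Pi$ is ucp, $\dom_\Pi$ is a $\Cast$-subalgebra, and by assumption it contains $A\crossed G$ and the elements $q_g\nu_g$ for $g\in G$. The algebra $C\crossed G$ is the closed span of the elements $c\nu_g$ with $c\in C_g=Cp_g$. So it suffices to show two things: that $\nu_g\in\dom_\Pi$ for all $g\in G$, and that $C\subseteq\dom_\Pi$. For the second, we would use the hypothesis $C=\operatorname{Orb}^\gamma_G(A)$. The generators $\gamma_g(ap_{g^\inv})=\nu_g a\nu_{g^\inv}$ of $C$ are then products of elements of $\dom_\Pi$, so $C\subseteq \dom_\Pi$ follows once the first point is established.

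First we would place the projections $q_g$ and $p_g$ inside $(C\crossed G)^\ddual$. Because $A_g\subseteq C_g=Cp_g$, the unit $q_g$ of $A_g^\ddual$ is the strong limit of an approximate unit of $A_g$, hence $q_g\le p_g$. Set $r_g:=p_g-q_g$, a projection. Next we would check the identity $q_g\nu_g(q_g\nu_g)^*=q_g\nu_g\nu_{g^\inv}q_g=q_gp_g=q_g$. Since $q_g\nu_g\in\dom_\Pi$, this gives $q_g\in\dom_\Pi$. For $r_g$, the hypothesis gives $\Pi(r_g)=\Pi(p_g)-\Pi(q_g)=0$, hence $\Pi(r_g^*r_g)=\Pi(r_gr_g^*)=0=\Pi(r_g)^*\Pi(r_g)$. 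Thus $r_g\in\dom_\Pi$ with $\Pi(r_g)=0$, and consequently $p_g=q_g+r_g\in\dom_\Pi$.

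Next we would write $\nu_g=p_g\nu_g=q_g\nu_g+r_g\nu_g$, so it remains to treat $r_g\nu_g$. For the left product we compute $\Pi((r_g\nu_g)(r_g\nu_g)^*)=\Pi(r_gp_gr_g)=\Pi(r_g)=0$. For the right product we have $(r_g\nu_g)^*(r_g\nu_g)=\nu_{g^\inv}r_g\nu_g$. Using that $\nu$ implements the bidual extension of $\gamma$, which is compatible with that of $\alpha$, we expect $\nu_{g^\inv}p_g\nu_g=p_{g^\inv}$ and $\nu_{g^\inv}q_g\nu_g=q_{g^\inv}$. The latter holds because conjugation by $\nu_{g^\inv}$ carries $A_g$ onto $A_{g^\inv}$ and is normal. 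Hence $(r_g\nu_g)^*(r_g\nu_g)=r_{g^\inv}$, whose image under $\Pi$ is $0$. Therefore $r_g\nu_g\in\dom_\Pi$ (with $\Pi(r_g\nu_g)=0$), so $\nu_g\in\dom_\Pi$, and the generating argument above concludes the proof.

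The main obstacle is the bookkeeping in the bidual. One has to justify that $q_g\le p_g$ and that conjugation by the partial isometry $\nu_{g^\inv}$ sends $q_g$ to $q_{g^\inv}$ inside $(C\crossed G)^\ddual$. I would handle both by realizing $q_g$ as the strong limit of $\iota(u_\mu)$ for an approximate identity $(u_\mu)_\mu$ of $A_g$. I would then use that $\nu_{g^\inv}u_\mu\nu_g=\alpha_{g^\inv}(u_\mu)$ is an approximate identity for $A_{g^\inv}$, together with weak$^*$-continuity of multiplication in the bidual.
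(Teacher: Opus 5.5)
Your proof is correct and follows essentially the same route as the paper: reduce to showing $\nu_g\in\dom_\Pi$, using that $C\crossed G$ is generated by $A$ and the $\nu_g$'s, and then exploit $\Pi(p_g)=\Pi(q_g)$ together with $\nu_g^*q_g\nu_g=q_{g^{-1}}$. Your splitting $\nu_g=q_g\nu_g+(p_g-q_g)\nu_g$, in which $\Pi$ annihilates the second summand, is just a repackaging of the paper's identity $\Pi(\nu_g)=\Pi(q_g\nu_g)$. You also make explicit why $q_g\in\dom_\Pi$ (via $q_g=(q_g\nu_g)(q_g\nu_g)^*$), which the paper leaves implicit.
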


\begin{proof} We begin by observing that $q_g\leq p_g$ for all $g\in G$ since $A_g\subseteq Cp_g$, and in particular the partial isometries $\{\nu_g\}_{g\in G}$ implement the partial action on $A^\ddual$. Now to see that~$\Pi$ is multiplicative on $C\crossed G$, observe first that the assumption $C=\operatorname{Orb}^\gamma_G(\pi(A))$ implies that $C\crossed G$ is generated as a $\Cast$-algebra by~$A$ and the family of partial isometries $\{\nu_g\}_{g\in G}$. Since $A$ is contained in $\dom_\Pi$ by assumption and $\dom_\Pi$ is a $\Cast$-algebra, all we need to show to establish the lemma is that~$\nu_g$ belongs to~$\dom_\Pi$ for all $g\in G$. 
    
  By assumption the partial isometry $q_g \nu_g$ belongs to $\dom_\Pi$. Also, the assumption that $\Pi(p_g) = \Pi(q_g)$ yields that $p_g\in \dom_\Pi$. In particular $\Pi(\nu_g)=\Pi(p_g)\Pi(\nu_g)=\Pi(q_g)\Pi(\nu_g)=\Pi(q_g\nu_g)$ and hence
 $$\Pi(\nu_g)^*\Pi(\nu_g)=\Pi(q_g\nu_g)^*\Pi(q_g\nu_g)=\Pi(q_{g^{-1}})=\Pi(p_{g^{-1}})=\Pi(\nu_g^*\nu_g).$$
   Applying this to $g^\inv$ yields $\Pi(\nu_g)\Pi(\nu_g)^*=\Pi(p_g)=\Pi(\nu_g\nu_g^*)$, and it follows that $\nu_g\in \dom_\Pi$ as required.
\end{proof}

The next proposition will be the main ingredient in the proof that the ideal intersection property for $(I_G(A), G, I_G(\alpha))$ implies the ideal intersection property for $(A, G, \alpha)$ also for non-unital partial actions.

\begin{prop}\label{pro: IIP-equiv. for unitization} Let $A$ be a unital $\Cast$-algebra and $\alpha=\partacg{A}{\alpha}$ a partial action on~$A$. Consider the canonical inclusion $A\subseteq I(A)$ and set $C:=\operatorname{Orb}^{I(\alpha)}_G(A)$. Let $\gamma=\partacg{C}{\gamma}$ be the unital partial action on~$C$ as in Lemma~\ref{lem:partial-action-on-orbits}. Let $\pi\colon C\crossed G\to B(\hilb)$ be a ucp map that is multiplicative on~$A\crossed G$ and suppose that $\pi\vert_A$ is faithful. Then there exist a unital $\Cast$-algebra~$D$ and a ucp map $\phi\colon \operatorname{im}\pi\to D$ such that $\phi\circ \pi\colon C\crossed G\to D$ is a ${}^*$-homomorphism and the restriction~$\phi\circ \pi\vert_{C}$ is faithful.
\begin{proof} For each $g\in G$ let $p_g$ be the unit of $I(A_g)=I(A)_g$, so that $C_g=Cp_g$. Let~$\Pi:= \pi^\ddual$ be the unique normal extension $\Pi \colon (C\crossed G)^\ddual \to B(\hilb)$ of~$\pi$. Because $\pi$ is multiplicative on~$A\crossed G$ it follows that $(A\crossed G)^\ddual$ is contained in $\dom_\Pi$.

 Let $g\mapsto \nu_g$ be the canonical partial ${}^*$-representation of $G$ in $C\crossed G$, so that $v_gv_{g^{-1}}=p_g$ for all $g\in G$, and for each $g\in G$ let $q_g \in A_g^\ddual \subseteq (A\crossed G)^\ddual$ be the unit of~$A_g^\ddual$. Notice that $\Pi(q_gv_g)$ implements the partial action on $\pi(A)$, and hence also on $\Pi(A^\ddual)=\pi^\ddual(A^\ddual)$. Let $\operatorname{Orb}^{\alpha^\ddual}_G(A)$ be the $\Cast$-subalgebra of $A^\ddual$ generated by the orbits of~$A$ in~$A^\ddual$ as in Lemma~\ref{lem:partial-action-on-orbits}. Observe that $$\operatorname{Orb}^{\alpha^\ddual}_G(A)= \Cast(\{q_g\nu_g a (q_g\nu_g)^* \,\vert\, a\in A, g\in G\})\subseteq (C\crossed G)^\ddual.$$ 
 
 Consider the $\Cast$-subalgebra~$\mathcal{B}$ of $\Cast(\Pi)$ defined by $$\mathcal{B} := \Cast\big(\operatorname{im}\pi \cup\{\Pi(b) \Pi(q_g \nu_g) \vert b\in \operatorname{Orb}^{\alpha^\ddual}_G(A), g\in G\}\big).$$ Since $\pi\vert_A$ is faithful and $A\subseteq C\subseteq I(A)$, the restriction $\pi\vert_C$ is completely isometric and thus there exists a ucp map $\rho\colon B(\hilb)\to I(A)$ such that $\rho\circ\pi\vert_C=\id_C$. Define an ideal $I\ideal \mathcal{B}$ by 
    \begin{equation*}
        I := \{d\in \mathcal{B} \,\vert \,\rho(b^* d^*d b) = 0 \text{ for all } b\in \mathcal{B}\},
    \end{equation*} and notice that $I\cap \pi(A)=\trivial$. Let $\psi \colon \mathcal{B} \to \mathcal{B}/I$ be the quotient map. We will prove that $\psi \circ \pi \colon C \crossed G \to \mathcal{B}/I$ is a ${}^*$-homomorphism. To do this, by Lemma~\ref{lem: Mult map for inclusion} it suffices to show that  $(\psi \circ \Pi)(p_g) = (\psi \circ \Pi)(q_g)$ for all $g\in G$ or, equivalently, that $\Pi(p_g) - \Pi(q_g)\in I$ for all $g\in G$.

   We begin by proving a couple of claims.
    
    \vspace{0.5em}
    \textit{\underline{Claim 1:} For all $a \in A$ and $g\in G$ we have $\rho(\Pi(\nu_g q_g a (q_g \nu_g)^*)) = I(\alpha)_g(a p_{g^\inv})$. In particular, $\Pi(\operatorname{Orb}^{\alpha^\ddual}_G(A))$ is contained in the multiplicative domain of~$\rho$.}

    \begin{proof}[Proof of Claim 1]\noindent To see the first assertion assume $a = c^* c$ for some $c\in A$ and let $(u_\mu)_\mu \subseteq A_{g^\inv}$ be an increasing approximate identity for $A_{g^\inv}$. Then
    \begin{align*}
        \Pi (I(\alpha)_g(c^*c p_{g^\inv})) = \Pi (I(\alpha)_g(c^* p_{g^\inv} c)) &\geq \Pi (I(\alpha)_g(c^* u_\mu c)) \\
        &= \pi (\alpha_g(c^* u_\mu c)) = \Pi ( q_g \nu_g c^* u_\mu c (q_g \nu_g)^*)
    \end{align*}
    for all $\mu$ since $q_g \nu_g$ implements the partial action on~$A_g$. It follows by taking the supremum over~$\mu$ that $\Pi (I(\alpha)_g(c^*c p_{g^\inv}))\geq \Pi ( q_g \nu_g c^* c (q_g \nu_g)^*)$ since $\Pi$ is normal. Applying~$\rho$ yields
    \begin{align*}
        I(\alpha)_g(c^*c p_{g^\inv})\geq  \rho(\Pi (q_g \nu_g c^* c (q_g \nu_g)^*)).
    \end{align*}
   For the reverse inequality notice that for all~$\mu$ we have
    \begin{align*}
        \rho(\Pi(q_g \nu_g c^*c (q_g \nu_g)^*)) \geq \rho(\pi(\alpha_g(c^*u_\mu c))) = \alpha_g(c^*u_\mu c).
    \end{align*}
    This gives 
    \begin{equation*}
      \rho(\Pi(q_g \nu_g c^*c (q_g \nu_g)^*))  \geq \sup_\mu I(\alpha)_g(c^*u_\mu c) = I(\alpha)_g\big(\sup_\mu (c^*u_\mu c )\big)=I(\alpha)_g(c^*cp_{g^{-1}} ),
    \end{equation*} where in the first equality above we used that, being a ${}^*$-isomorphism between monotone complete $\Cast$-algebras, $I(\alpha)_g$ is normal. This shows that $\rho(\Pi(q_g \nu_g c^*c (q_g \nu_g)^*))=I(\alpha)_g(c^*c p_{g^\inv})$ for all $c\in A$. The first assertion in the claim then follows because~$A$ is spanned by positive elements.  
    
    To see that $\Pi(\operatorname{Orb}^{\alpha^\ddual}_G(A))\subseteq \dom_\rho$ it suffices to show that $\Pi(q_g \nu_g a (q_g \nu_g)^*)$ belongs to $\dom_\rho$ for all $a\in A$ and $g\in G$ since $\Pi$ is multiplicative on~$\operatorname{Orb}^{\alpha^\ddual}_G(A)$. This follows because for  $y = \Pi(\nu_g^* q_g a q_g \nu_g)$  we have $\rho(y^*y) = \rho(\Pi(\nu_g^* q_g a^*a q_g \nu_g) = I(\alpha)_{g^\inv}(a^*ap_{g}) = \rho(y)^* \rho(y)$ and similarly $\rho(yy^*) = \rho(y)\rho(y)^*$. This completes the proof of the \emph{Claim 1}.\renewcommand{\qedsymbol}{$\diamond$}
    \end{proof}
    \vspace{0.5em}

    \textit{\underline{Claim 2:} Let  $a_1, \ldots, a_n \in A$, $t_1, \ldots, t_n\in G$ and consider the corresponding elements $b = I(\alpha)_{t_1}(a_1 p_{t_1^\inv}) \cdots I(\alpha)_{t_n}(a_n p_{t_n^\inv})$ and $b_q = q_{t_1} \nu_{t_1} a_1 (q_{t_1} \nu_{t_1})^* \cdots \allowbreak q_{t_n} \nu_{t_n} a_n (q_{t_n} \nu_{t_n})^* $ of~$C$ and of $\operatorname{Orb}^{\alpha^\ddual}_G(A)$, respectively. Then for all~$d \in \mathcal{B}$ and $g\in G$ we have that $\rho(\pi(b \nu_g)^* d^* d \allowbreak\pi(b\nu_g)) = 0$ if and only if $\rho(\Pi(b_q q_g \nu_g)^* d^* d \Pi(b_q q_g\nu_g)) = 0$.}

    \begin{proof}[Proof of Claim 2]\noindent Write $x_g := \Pi(b\nu_g) - \Pi(b_q q_g \nu_g) = \pi(b\nu_g) - \Pi (b_q q_g \nu_g)$. We will first show that $\rho(x_g^*x_g) = 0$. We have
    \begin{equation}\label{eq: mixed-terms}
    \begin{aligned}
        \rho(x_g^*x_g) &= \rho( \pi(b\nu_g)^* \pi(b\nu_g)) - \rho(\pi(b\nu_g)^*\Pi(b_q q_g \nu_g)) \\
        &\qquad - \rho(\Pi(b_q q_g \nu_g)^*\pi(b \nu_g)) + \rho(\Pi((b_q q_g \nu_g)^* b_q q_g \nu_g)) \\
        &\leq \rho(\pi((b\nu_g)^*b \nu_g)) - \rho(\Pi((b\nu_g)^* b_q q_g \nu_g))\\
        &\qquad - \rho(\Pi((b_q q_g \nu_g)^* b \nu_g)) + \rho(\Pi(\nu_{g^\inv} q_g b_q^* b_q q_g \nu_g))\\
        &= 2 I(\alpha)_{g^\inv}(b^* b p_g)  - \rho(\Pi((b\nu_g)^* b_q q_g \nu_g)) - \rho(\Pi((b_q q_g \nu_g)^* b \nu_g)),
    \end{aligned}
    \end{equation}
    where the inequality follows from the Cauchy--Schwarz inequality and the fact that $b_q q_g \nu_g \in \dom_\Pi$. The second equality follows by \emph{Claim~1} and because $\rho\circ\pi\vert_C=\id_C$. For the mixed terms after the last equality in~\eqref{eq: mixed-terms} observe first that $\pi(C)$ is contained in the multiplicative domain of~$\rho$. Now recall that $\nu_g\nu_g^* = p_g$ and $q_g \leq p_g$, and thus $q_g p_g = q_g$. Using this we compute 
    \begin{align*}
        \rho(\Pi((b\nu_g)^* b_q q_g \nu_g)) &= \rho(\Pi((b\nu_g)^* p_g q_g b_q q_g \nu_g)) = \rho(\Pi( \nu_g^* b^* \nu_g \nu_g^* q_g b_q q_g  \nu_g)) \\
        &= \rho(\pi(\nu_g^* b^* \nu_g) \Pi(\nu_g^* q_g b_q q_g  \nu_g)) = I(\alpha)_{g^\inv}(b^* p_g) \rho( \Pi(\nu_g^* q_g b_q q_g  \nu_g))\\
        &= I(\alpha)_{g^\inv}(b^* p_g) I(\alpha)_{g^\inv}(b p_g) = I(\alpha)_{g^\inv}(b^*b p_g).
    \end{align*}
 Taking adjoints also gives $\rho(\Pi((b_q q_g \nu_g)^* b \nu_g)) = I(\alpha)_{g^\inv}(b^*b p_g)$. Substituting these equalities in the last line of \eqref{eq: mixed-terms} yields
    \begin{equation*}
        0 \leq \rho(x_g^* x_g) \leq 2 I(\alpha)_{g^\inv}(b^* b p_g)  -  2 I(\alpha)_{g^\inv}(b^* b p_g) = 0,
    \end{equation*}
    showing that $\rho(x_g^* x_g)=0$ as wanted. 
    
  The above implies that $\rho(x_g^* d^* d x_g) = 0$ for all $d\in \mathcal{B}$ as $\rho(x_g^* d^* d x_g) \leq \Vert d \Vert^2 \rho(x_g^* x_g)$. Expanding this gives
    \begin{equation}\label{eq: Claim 2 trick}
    \begin{aligned}
       0 = \rho(x_g^* d^* d x_g) &= \rho(\pi(b \nu_g)^* d^*d \pi(b \nu_g)) - \rho(\pi(b\nu_g)^* d^* d \Pi(b_q q_g \nu_g))\\
        &\quad - \rho(\Pi(b_q q_g \nu_g)^* d^* d \pi(b\nu_g)) + \rho(\Pi(b_q q_g \nu_g)^* d^* d \Pi(b_q q_g \nu_g)).
    \end{aligned}
    \end{equation}
    Now suppose $\rho(\pi(b \nu_g)^* d^*d \pi(b\nu_g)) = 0$. Then since $\Vert \rho(a^*c) \Vert^2 \leq \Vert \rho(a^* a) \Vert \Vert c\Vert^2$ for all $a,c\in \mathcal{B}$ we obtain $$\rho(\pi(b\nu_g)^* d^* d \Pi(b_q q_g \nu_g)) = 0 = \rho(\Pi(b_q q_g \nu_g)^* d^* d \pi(b\nu_g))$$ by setting $a = d\pi(b\nu_g) $ and $c = d \Pi(b_q q_g \nu_g)$. It follows from \eqref{eq: Claim 2 trick} that we must have $\rho(\Pi(b_q q_g \nu_g)^* d^* d \Pi(b_q q_g \nu_g)) = 0$. The other direction follows by the exact same reasoning, proving \emph{Claim~2}.\renewcommand{\qedsymbol}{$\diamond$}
\end{proof}
    
    We proceed to showing that $\pi(p_g) - \Pi(q_g)\in I$. To lighten notation, write $r_g=\pi(p_g) - \Pi(q_g)$. By definition of $I$, we need to show that $\rho(b^* r_g^* r_g b) = 0$ for all $b\in \mathcal{B}$. By Cauchy--Schwarz and the definition of~$\mathcal{B}$ it suffices to show that $\rho( B_n^* r_g^* r_g B_n) = 0$ for all $n\geq 1$ and $B_n\in\mathcal{B}$ of the form 
    \begin{equation}\label{eq: induction defi C_n}
        B_n = \prod_{k=1}^n x_k \qquad \text{ for } x_k = \pi(b_k\nu_{h_k}) \text{ or } x_k = \Pi(b_{k,q} q_{h_k}\nu_{g_k}),\, k=1,\ldots, n,
    \end{equation} where $b_k \in C$, $b_{k,q}\in \operatorname{Orb}_G^{\alpha^\ddual}(A)$ in turn are spanning elements as in the statement of~\emph{Claim~2}. Observe in addition that since $(A\crossed G)^\ddual \subseteq \dom_\Pi$, we can assume that for each $1\leq k\leq n-1$ the consecutive factors~$x_k$ and~$x_{k+1}$ are not both of the form $\Pi(b_q\nu_{g})$ for some $g\in G$ and $b_{q}\in \operatorname{Orb}_G^{\alpha^\ddual}(A)$ since the elements of this form span a subalgebra of $(A\crossed G)^\ddual$.

    Now we will show that $\rho( B_n^* r_g^* r_g B_n) = 0$ by induction on~$n$. Suppose~$n=1$, so that $C_1 = \pi(b\nu_g)$ for $b\in C, g\in G$ or $C_1 = \Pi(b_q q_g \nu_g)$ for $b_q \in \operatorname{Orb}_G^{\alpha^\ddual}(A),g\in G$. By \emph{Claim 2} we can in fact assume the latter, i.e. $C_1=\Pi(b_q q_g \nu_g)$. Using that $b_q q_g \nu_g \subseteq \dom_\Pi$ and that $q_gp_g=q_g$ we get
    \begin{align*}
        \rho(\Pi(b_q q_g \nu_g)^*r_g^* r_g \Pi(b_q q_g \nu_g)) &= \rho(\Pi( p_g b_q q_g \nu_g)^* \Pi(p_g b_q q_g \nu_g)) \\
        &\quad \, - \rho(\Pi( p_g b_q q_g \nu_g)^* \Pi(b_q q_g \nu_g)) \\
        &\quad \, - \rho(\Pi( b_q q_g \nu_g)^* \Pi(p_g b_q q_g \nu_g)) \\
        &\quad \,+ \rho(\Pi(( b_q q_g \nu_g)^*b_q q_g \nu_g))= 0
    \end{align*} since each the summand coincides with~$I(\alpha)_g(b^*bp_g)$ or $-I(\alpha)_g(b^*bp_g)$ according to its sign. This gives the case $n=1$.
    
    Now assume $\rho(B_n^* r_g^* r_g B_n) = 0$ for all $B_n$ as in \eqref{eq: induction defi C_n} for a fixed $n\in \mathbb{N}$. Let $B_{n+1}$ be as in \eqref{eq: induction defi C_n} with $n+1$ factors. We will show $\rho(B_{n+1}^* r_g^* r_g B_{n+1}) = 0$. Again by \emph{Claim 2} we can assume that $B_{n+1} = B_n \Pi(b_{n+1, q} q_{h_{n+1}} \nu_{h_{n+1}})$. Furthermore, by the induction hypothesis and the observation below~\eqref{eq: induction defi C_n} we can assume that $x_n = \pi(b_n \nu_{h_n})$ for $b_n \in C$ a spanning element as in \emph{Claim~1} and $h_n\in G$. Appealing to the fact that $b_{n+1, q} q_{h_{n+1}} \nu_{h_{n+1}}$ is in the multiplicative domain of~$\Pi$ and using that $q_{h_{n+1}}=q_{h_{n+1}}p_{h_{n+1}}$ we obtain
    \begin{equation*}\label{eq: rewrite of C_n}
        \begin{aligned}
            B_{n+1} &= B_{n-1} x_n x_{n+1} = B_{n-1} \Pi(b_n \nu_{h_n} b_{n+1, q} q_{h_{n+1}} \nu_{h_{n+1}}) \\
            &= B_{n-1} \Pi(b_n \nu_{h_n} \nu_{h_{n+1}}\nu_{h_{n+1}}^* q_{h_{n+1}} b_{n+1, q} q_{h_{n+1}} \nu_{h_{n+1}}) \\
            &= B_{n-1} \pi(b_n p_{h_n} \nu_{h_n h_{n+1}}) \Pi(\nu_{h_{n+1}}^* q_{h_{n+1}} b_{n+1, q} q_{h_{n+1}} \nu_{h_{n+1}})\\
            &= B'_n \Pi(\nu_{h_{n+1}}^* q_{h_{n+1}} b_{n+1, q} q_{h_{n+1}} \nu_{h_{n+1}}),
        \end{aligned}
    \end{equation*}
    where $B'_n=B_{n-1}\pi(b_n p_{h_n} \nu_{h_n h_{n+1}})$. Since $\Pi(\operatorname{Orb}_G^{\alpha^\ddual}(A))\subseteq \dom_\rho$ by \emph{Claim~1} this gives
    \begin{align*}
        \rho(B_{n+1}^* r_g^* r_g B_{n+1}) &= \rho\Big(\Pi(\nu_{h_{n+1}}^* q_{h_{n+1}} b_{n+1, q} q_{h_{n+1}} \nu_{h_{n+1}})^* {B'}_n^* r_g^*\\
        &\qquad\,\, r_g B'_n \Pi(\nu_{h_{n+1}}^* q_{h_{n+1}} b_{n+1, q} q_{h_{n+1}} \nu_{h_{n+1}})\Big) \\
        &= I(\alpha)_{h_{n+1}^\inv} (b_{n+1}^* p_{h_{n+1}}) \rho({B'}_n^* r_g^* r_g B'_n )I(\alpha)_{h_{n+1}^\inv} (b_{n+1} p_{h_{n+1}}) = 0
    \end{align*}
    by the induction hypothesis. This shows that $r_g \in I$ for all $g\in G$. 
    
    The above implies that $\psi(\pi(p_g))=\psi(\Pi(q_g))$ for all $g\in G$. We deduce from Lemma~\ref{lem: Mult map for inclusion} that $\psi\circ\pi\colon C\crossed G\to \mathcal{B}/I$ is a ${}^*$-homomorphism. Setting $D:=\mathcal{B}/I$ and $\phi:=\psi\vert_{\operatorname{im}\pi}$ we see that $\phi\circ\pi\colon C\crossed G\to D$ is a ${}^*$-homomorphism as required. Finally, that $\phi\circ\pi\vert_{C}$ is faithful follows because $I\cap A=\trivial$ and $A\subseteq C\subseteq I(A)$.
\end{proof}
\end{prop}

The next lemma implies that the partial action on the $\Cast$-algebra generated by the orbits of~$A$ in $I(A)$ is in a precise sense the smallest unitization of~$\alpha=\partacg{A}{\alpha}$.

\begin{lem}\label{lem:co-universal-unitization} Let $A$ be a unital $\Cast$-algebra and $\alpha=\partacg{A}{\alpha}$ a partial action on~$A$. Let $(B, G, \beta)$ be a unital partial $\Cast$-dynamical system with corresponding family of central projections $\{r_g\}_{g\in G}$ and suppose that there exists a $G$-equivariant inclusion of $\Cast$-algebras $A\subseteq B$ such that $B=\operatorname{Orb}_G^\beta(A)$. Suppose in addition that $r_gA_g^\perp=\trivial$ for all $g\in G$, where $A_g^\perp=\{a\in A\mid aA_g=\trivial\}$ is the annihilator of~$A_g$ in~$A$. Then there exists a $G$-morphism $\Psi\colon B\to \operatorname{Orb}_G^{I(\alpha)}(A)$ such that $\Psi\vert_A=\id$. 
\begin{proof} We will follow the notation in the proof of Proposition~\ref{pro: IIP-equiv. for unitization}. Write $C=\operatorname{Orb}_G^{I(\alpha)}(A)$. Let $\varphi\colon B\crossed G\to B(\hilb)$ be a representation such that $\varphi\vert_B$ is faithful and let $\pi\colon C\crossed G \to B(\hilb)$ be a ucp map extending the restriction $\varphi\vert_{A\crossed G}$. Let $\Pi\colon (C\crossed G)^\ddual\to B(\hilb)$ be the unique normal extension of~$\pi$ and similarly consider the unique normal extension $\varphi^\ddual\colon (B\crossed G)^\ddual\to B(\hilb)$ of~$\varphi$ to~$(B\crossed G)^\ddual$. Observe that $\varphi^\ddual\vert_{(A\crossed G)^\ddual}$ coincides with~$\Pi$. 

Let $\rho\colon B(\hilb)\to I(A)$ be a ucp map such that $\rho\circ\pi\vert_C=\id_C$. As in \emph{Claim 1} in the proof of Proposition~\ref{pro: IIP-equiv. for unitization} we have that $\rho(\Pi(\nu_g q_g a (q_g \nu_g)^*)) = I(\alpha)_g(a p_{g^\inv})$ for all $a\in A$ and $g\in G$, and $\Pi(\operatorname{Orb}^{\alpha^\ddual}_G(A))$ in contained in the multiplicative domain of~$\rho$. 
We claim that $\rho(\varphi(r_g))=p_g$ for all $g\in G$. Indeed, since $A_g\subseteq B_g=r_gB$ we see that $\rho(\varphi(r_g))\geq p_g$. Since~$A$ is contained in the multiplicative domain of~$\rho\circ\varphi$ and $r_gA_g^\perp=\trivial$ by assumption, we must have $\rho(\varphi(r_g))A_g^\perp=\trivial$, and hence $\rho(\varphi(r_g))(1-p_g)=0$ since the inclusion $I(A)\subseteq I_G(A)$ is normal by Proposition~\ref{prop: inclusion into I_G(A) normal}. This gives the reverse inequality $\rho(\varphi(r_g))\leq p_g$, proving the claim.

It follows that $\varphi(r_g)\in \dom_\rho$. Now let $g\mapsto u_g$ be the canonical partial ${}^*$-representation of~$G$ in $B\crossed G$ implementing~$\beta$, so that $u_gu_g^*=r_g$ for all $g\in G$. Then observing that $r_g\geq q_g$ for all $g\in G$ we obtain for all $a\in A$ 
\begin{equation*}
    \begin{aligned}
\rho(\varphi(\beta_g(ar_{g^{-1}})))&=\rho(\varphi(r_g))\rho(\varphi(\beta_g(ar_{g^{-1}})))\rho(\varphi(r_g))\\&=\rho(\varphi^\ddual(q_g))\rho(\varphi^\ddual(\beta_g(ar_{g^{-1}})))\rho(\varphi^\ddual(q_g))\\&=\rho(\varphi^\ddual(q_gu_gar_{g^{-1}}u_{g}^*q_g)))=\rho(\varphi^\ddual(q_gu_gaq_{g^{-1}}u_{g}^*q_g)))\\&=\rho(\Pi(q_g\nu_gaq_{g^{-1}}\nu_{g}^*q_g))=I(\alpha)_g(ap_{g^{-1}}).
         \end{aligned}
\end{equation*} The same reasoning employed in the proof of Proposition~\ref{pro: IIP-equiv. for unitization} shows that $\varphi(B)$ is contained in the multiplicative domain of $\rho$ because $B=\operatorname{Orb}_G^\beta(A)$. We conclude that the composition $\Psi:=\rho\circ\varphi\vert_B$ is a ${}^*$-homomorphism with $\Psi\vert_A=\id_A$ and satisfies $\Psi(r_g)=p_g$ for all $g\in G$, i.e. $\Psi$ is $G$-unital. To see that $\Psi$ is also $G$-equivariant, let $a\in A$ and $h\in G$. For $g\in G$ recall that $\beta_g(\beta_h(ar_{h^{-1}})r_{g^\inv})=\beta_{gh}(ar_{(gh)^\inv})r_g$ and thus \begin{equation*}
\begin{aligned}
    \Psi(\beta_g(\beta_h(ar_{h^\inv})r_{g^\inv}))&=\Psi(\beta_{gh}(ar_{(gh)^\inv})r_g)=I(\alpha)_{gh}(ap_{(gh)^\inv})p_g\\&=I(\alpha)_g(I(\alpha)_h(ap_{h^\inv})p_{g^\inv})=I(\alpha)_g(\Psi(\beta_h(ar_{h^\inv})r_{g^\inv})),
    \end{aligned}
\end{equation*} proving that $\Psi$ is indeed a $G$-morphism.
\end{proof}
    \end{lem}

The following is an important consequence of Lemma~\ref{lem:co-universal-unitization}.
    \begin{cor}\label{cor:unitization-isomorphism} Let $A$ be a unital $\Cast$-algebra and $\alpha=\partacg{A}{\alpha}$ a partial action on~$A$. Suppose that $\beta=\partacg{B}{\beta}$ is a partial action on a unital $\Cast$-algebra~$B$ with $G$-equivariant inclusions of $\Cast$-algebras $A\subseteq B\subseteq I_G(A)$. Then there is a $G$-isomorphism $\Psi\colon \operatorname{Orb}_G^{I_G(\alpha)}(B)\to \operatorname{Orb}_G^{I(\beta)}(B) $ such that $\Psi\vert_B=\id_B$. In particular, $I_G(A)=I_G(B)$.
    \begin{proof} Let $\{p_g\}_{g\in G}$ be the central projections in $I_G(A)$ corresponding to~$I_G(\alpha)=\partacg{I_G(A)}{I_G(\alpha)}$. Observe that $p_gB_g^\perp=\trivial$ for all $g\in G$ since $A_g\subseteq B_g\subseteq p_gI_G(A)$ and hence $B_g^\perp\subseteq A_g^\perp\subseteq (1-p_g)I_G(A)$.  Applying Lemma~\ref{lem:co-universal-unitization} with $\beta=\partacg{B}{\beta}$ playing the role of~$\alpha=\partacg{A}{\alpha}$ gives a $G$-morphism $\Psi\colon \operatorname{Orb}_G^{I_G(\alpha)}(B)\to \operatorname{Orb}_G^{I(\beta)}(B)$ such that $\Psi\vert_B=\id_B$. Notice that $\Psi$ is a surjective ${}^*$-homomorphism. Also, because $\Psi\vert_A=\id_A$ we see that~$\Psi$ is completely isometric when restricted to~$\operatorname{Orb}^{I(\alpha)}_G(A)$.  By Lemma~\ref{lem:isomorphism-envelopes-AB} we have $I_G(\operatorname{Orb}^{I(\alpha)}_G(A))=I_G(A)$, and hence the inclusion $\operatorname{Orb}^{I(\alpha)}_G(A)\subseteq  \operatorname{Orb}_G^{I_G(\alpha)}(B)$ is $G$-essential, which implies that $\Psi$ is a ${}^*$-isomorphism as needed. That $I_G(A)=I_G(B)$ follows because the inclusion $ \operatorname{Orb}_G^{I_G(\alpha)}(B)\subseteq I_G(A)$ is $G$-essential.
        \end{proof}
        \end{cor}

\begin{rem}\label{rem:partial-on-unitization} Let $(A, G, \alpha)$ be a partial action on~$A$ and suppose that $A$ is non-unital. Then $\alpha$ naturally induces a partial action on the minimal unitization~$\widetilde{A}$ of~$A$ by setting $\widetilde{A}_g:= A_g$ and $\widetilde{\alpha}_g:=\alpha_g$ if $g\neq e$, and $\widetilde{A}_e:=A$. As for global actions, $A\crossed G$ is an essential ideal in $\tilde{A}\crossed G$, and it follows that $(A, G, \alpha)$ has the ideal intersection property if and only if $(\widetilde{A}, G, \widetilde{\alpha})$ does so. See the arguments in Lemma 7.1 and Proposition 7.2 of~\cite{GefUrs23}.
    \end{rem}

We are ready to establish the analog of Theorem~\ref{thm: IIP-equiv-unital-pa} for arbitrary partial actions. 

\begin{thm}\label{thm: Equivalent IIP A to I(A)}
    Let $(A, G, \alpha)$ be a partial $\Cast$-dynamical system. Then the following are equivalent:
    \begin{enumerate}
        \item[\rm{(1)}] $(A, G, \alpha)$ has the ideal intersection property;
        \item[\rm{(2)}] every partial $\Cast$-dynamical system $(B, G, \beta)$ with $$(A, G, \alpha)\subseteq  (B, G, \beta) \subseteq (I_G(A), G, I_G(\alpha))$$ has the ideal intersection property;
        \item[\rm{(3)}] the unital partial $\Cast$-dynamical system $(I_G(A), G, I_G(\alpha))$ has the ideal intersection property.
    \end{enumerate}
\end{thm}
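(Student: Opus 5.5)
We prove (1)$\Rightarrow$(3)$\Rightarrow$(2)$\Rightarrow$(1); the implication (2)$\Rightarrow$(1) is trivial. Throughout we first reduce to the case that $A$ is unital. If $A$ is non-unital, Remark~\ref{rem:partial-on-unitization} gives that $(A,G,\alpha)$ has the ideal intersection property if and only if $(\widetilde{A},G,\widetilde{\alpha})$ does. Moreover $\widetilde{A}\subseteq I(A)$ with $I(\widetilde A)=I(A)$ and compatible partial actions, so $I_G(\widetilde{A})=I_G(A)$. We may therefore assume $A$ is unital, with $\alpha$ not necessarily a unital partial action.

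For (1)$\Rightarrow$(3), Proposition~\ref{prop: A IIP implies I(A) IIP} gives the ideal intersection property for $(I(A),G,I(\alpha))$. This is a unital partial action with $I_G(I(A))=I_G(A)$, so Theorem~\ref{thm: IIP-equiv-unital-pa} (1)$\Rightarrow$(3) applied to $I(A)$ yields the ideal intersection property for $(I_G(A),G,I_G(\alpha))$.

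For (3)$\Rightarrow$(2), let $(A,G,\alpha)\subseteq(B,G,\beta)\subseteq(I_G(A),G,I_G(\alpha))$. After passing to $\widetilde B$ as above, we may assume $B$ unital. By Corollary~\ref{cor:unitization-isomorphism}, $I_G(B)=I_G(A)$. It therefore suffices to prove the implication ``(3)$\Rightarrow$(1)'' for an arbitrary unital $A$ with a (possibly non-unital) partial action, and then apply it to $B$.

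To prove this, set $C:=\operatorname{Orb}_G^{I(\alpha)}(A)$, which is a unital partial system with $A\subseteq C\subseteq I(A)$. By Lemma~\ref{lem:isomorphism-envelopes-AB}, $I_G(C)=I_G(A)$, so Theorem~\ref{thm: IIP-equiv-unital-pa} (3)$\Rightarrow$(1) shows that $(C,G,\gamma)$ has the ideal intersection property. Now take $J\ideal A\crossed G$ with $J\cap A=\trivial$, and choose a representation $\sigma$ of $(A\crossed G)/J$ on $\hilb$, so that $\sigma\circ q$ is a $*$-homomorphism on $A\crossed G$, faithful on $A$. Here we use $A\crossed G\subseteq C\crossed G$, which holds via the faithful conditional expectations. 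Arveson extension gives a ucp map $\pi\colon C\crossed G\to B(\hilb)$ extending it. Proposition~\ref{pro: IIP-equiv. for unitization} then provides $\phi$ with $\phi\circ\pi$ a $*$-homomorphism of $C\crossed G$ that is faithful on $C$. The ideal intersection property of $C$ makes $\phi\circ\pi$ injective, hence isometric, so $\pi$ is isometric on $A\crossed G$ and therefore $J=\trivial$.

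The main obstacle is this last step: producing, from a representation of $A\crossed G$ faithful only on $A$, a genuine representation of the unitization's crossed product that is faithful on $C$. Proposition~\ref{pro: IIP-equiv. for unitization} does exactly this. The remaining care is in the bookkeeping of the reductions: checking that the unitization steps preserve $I_G$, and that the inclusion $A\crossed G\subseteq C\crossed G$ is the one compatible with $\pi$.
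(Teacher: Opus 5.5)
Your proof is correct and follows essentially the same route as the paper. You obtain (1)$\Rightarrow$(3) from Proposition~\ref{prop: A IIP implies I(A) IIP} and Theorem~\ref{thm: IIP-equiv-unital-pa}; for (3)$\Rightarrow$(2) you extend a representation killing $J$ to the crossed product of the orbit unitization, apply Proposition~\ref{pro: IIP-equiv. for unitization}, and conclude with the ideal intersection property of the orbit unitization supplied by Theorem~\ref{thm: IIP-equiv-unital-pa}. The only difference is cosmetic: you work directly with $\operatorname{Orb}_G^{I(\beta)}(B)$ and use only the consequence $I_G(B)=I_G(A)$ of Corollary~\ref{cor:unitization-isomorphism}, whereas the paper extends to $\operatorname{Orb}_G^{I_G(\alpha)}(B)\crossed G$ and transports along the isomorphism with $\operatorname{Orb}_G^{I(\beta)}(B)$.
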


\begin{proof}
    We will show (1) $\Rightarrow$ (3) $\Rightarrow$ (2) $\Rightarrow$ (1). The first implication follows from Proposition~\ref{prop: A IIP implies I(A) IIP} and Theorem~\ref{thm: IIP-equiv-unital-pa}. For the implication (3) $\Rightarrow$ (2), let $(B, G, \beta)$ be a partial $\Cast$-dynamical system with $G$-equivariant inclusions $A\subseteq B\subseteq I_G(A)$ and let $J\ideal B\crossed G$ with $J\cap B=\trivial$. By Remark~\ref{rem:partial-on-unitization} we may assume that~$B$ is unital. Let $\pi\colon B\crossed G\to B(\hilb)$ be a unital representation of $B\crossed G$ with $\ker\pi=J$ and let $\hat{\pi}$ be a ucp map from  $\operatorname{Orb}_G^{I_G(\alpha)}(B)\crossed G$ into $ B(\hilb)$ extending~$\pi$. Identifying $\operatorname{Orb}_G^{I_G(\alpha)}(B)$ with the unitization $\operatorname{Orb}_G^{I(\beta)}(B)$ of $\beta=\partacg{B}{\beta}$ in~$I(B)$ using Corollary~\ref{cor:unitization-isomorphism} and applying Proposition~\ref{pro: IIP-equiv. for unitization} yields a $\Cast$-algebra~$D$ and a ucp map $\phi\colon \operatorname{im}\hat{\pi}\to D$ such that $\phi\circ\hat{\pi}$ is a ${}^*$-homomorphism and $\phi\circ\hat{\pi}$ is faithful on~$\operatorname{Orb}_G^{I_G(\alpha)}(B)$.

    Set $I:=\ker(\phi\circ\hat{\pi})$. Then $I$ is an ideal in~$ \operatorname{Orb}_G^{I_G(\alpha)}(B)\crossed G $ containing~$ J$ and satisfying $I\cap \operatorname{Orb}_G^{I_G(\alpha)}(B)=\trivial $. Since the $G$-injective envelope of $\operatorname{Orb}_G^{I_G(\alpha)}(B)$ is~$I_G(A)$, the assumption that $(I_G(A), G, I_G(\alpha))$ has the ideal intersection property combined with Theorem~\ref{thm: IIP-equiv-unital-pa} implies that $I=\trivial$. This gives that $J=\trivial$ as needed, establishing the implication (3) $\Rightarrow$ (2). The implication (2) $\Rightarrow$ (1) is clear.
  \end{proof}

    \begin{rem} The main technical difficulty in the proof of the implication  (3) $\Rightarrow$ (2) in Theorem~\ref{thm: Equivalent IIP A to I(A)} in comparison with the $G$-unital case (Theorem~\ref{thm: IIP-equiv-unital-pa}) is that if $\hat{\pi}\colon I_G(A)\crossed G\to B(\hilb)$ is a ucp map extending a ${}^*$-homomorphism $\pi\colon A\crossed G\to B(\hilb)$, then there is no clear reason why the family of units $\{p_g\}_{g\in G}$ should be in the multiplicative domain of~$\hat{\pi}$. In particular, $G$-injectivity of $I_G(A)$ does not naturally apply, in contrast to Theorem~\ref{thm: IIP-equiv-unital-pa}. However, it is also possible to prove  (3) $\Rightarrow$ (2) in Theorem~\ref{thm: Equivalent IIP A to I(A)} using pseudo-expectations as in Theorem~\ref{thm: IIP-equiv-unital-pa} as follows. By Lemma~\ref{lem:co-universal-unitization} there is a $G$-morphism $\Psi\colon \pi^\ddual(\operatorname{Orb}_G^{\alpha^\ddual}(A))\to I_G(A)$ such that $\Psi\circ\pi\vert_A=\id_A$. Equipping $B(\hilb)$ with the generalized unital partial action induced by the partial isometries in $\{\pi^\ddual(q_g\nu_g)\}_{g\in G}$ in $\pi^\ddual((A\crossed G)^\ddual)$ we obtain a $G$-morphism $\hat{\Psi}\colon B(\hilb)\to I_G(A)$ by $G$-injectivity of $I_G(A)$. Using an argument as in the proof of Theorem~\ref{thm: essentiality-equivariant-maps} (see also Proposition~\ref{pro: pseudo-expec=cond-expec-nonunital}) one can show that the composition $\hat{\Psi}\circ\hat{\pi}\colon I_G(A)\crossed G\to I_G(A)$ is actually a $G$-equivariant conditional expectation, and satisfies $J_{\hat{\Psi}\circ\hat{\pi}}\supseteq \ker\pi$. 
        \end{rem}

\subsection{\texorpdfstring{$G$}{G}-rigidity and \texorpdfstring{$G$}{G}-essentiality}

In this subsection we will discuss $G$-rigidity and $G$-essentiality for the inclusion $A\subseteq I_G(A)$. We begin by showing that this inclusion is always rigid with respect to $G$-equivariant ucp maps, and essential with respect to $G$-morphisms.

\begin{prop}\label{pro: rigidity-non-unital} Let $(A, G, \alpha)$ be a partial $\Cast$-dynamical system. \begin{enumerate}
    \item If $\phi\colon I_G(A)\to I_G(A)$ is a $G$-equivariant ucp map such that $\phi\vert_A=\id_A$, then $\phi$ is necessarily a $G$-morphism and $\phi=\id_{I_G(A)}$.
    \item  If $(B, G, \beta)$ is a generalized unital partial $\Cast$-dynamical system and $\phi\colon I_G(A)\to B$ is a $G$-morphism such that $\phi\vert_A$ is completely isometric, then~$\phi$ is completely isometric. 
\end{enumerate}
\begin{proof} We may assume that~$A$ is unital. For \emph{(i)}, let $\phi\colon I_G(A)\to I_G(A)$ be an equivariant map with $\phi\vert_A=\id_A$. We claim that~$\phi$ is $G$-unital. Indeed, let $\{p_g\}_{ g\in G}$ be the family of central projections in $I(A)$ corresponding to $I(\alpha)$. Let $(u_\mu)_{\mu}$ be an increasing approximate identity for~$A_g$. Then $p_g=\sup_{\mu}u_\mu$ in $I_G(A)$ since the inclusion $I(A)\subseteq I_G(A)$ is normal. In particular, this implies $\phi(p_g)\geq \sup_{\mu}\phi(u_\mu)=p_g $ because $\phi\vert_A=\id_A$ by assumption. Since $p_gA_g^\perp=\trivial$ and $A\subseteq \dom_\phi$, we deduce as in the proof of Lemma~\ref{lem:co-universal-unitization} that $\phi(p_g)=p_g$.

It follows that $\phi$ is a $G$-morphism. By equivariance we see that~$\phi$ is the identity map when restricted to~$\operatorname{Orb}_G^{I(\alpha)}(A)$, and hence $\phi=\id_{I_G(A)}$ by $G$-rigidity of the inclusion of $\operatorname{Orb}_G^{I(\alpha)}(A)$ in~$I_G(A)$.

For \emph{(ii)}, let $\phi\colon I_G(A)\to B$ be a $G$-morphism such that $\phi\vert_A$ is completely isometric. Then the essentiality of the inclusion $A\subseteq I(A)$ implies that $\phi\vert_{I(A)}$ is completely isometric, and hence the assertion follows by $G$-essentiality of the inclusion $I(A)\subseteq I_G(A)$.
    \end{proof}
    \end{prop}

Next we discuss the inclusion $A\subseteq I_G(A)$ in relation to $G$-equivariant maps that are not necessarily $G$-unital. The next theorem shows that~$A\subseteq I_G(A)$ is essential with respect to $G$-equivariant ucp maps that are multiplicative on~$A$. In addition, the next theorem implies that if $\beta=\partacg{B}{\beta}$ is any generalized unital partial action and $A\subseteq B$ is a $G$-equivariant inclusion of $\Cast$-algebras, then this inclusion extends to a $G$-morphism from~$B$ into the $G$-injective envelope~$I_G(A)$ of~$A$. 

\begin{thm}\label{thm: essentiality-equivariant-maps} Let~$(A, G,\alpha)$ be a partial $\Cast$-dynamical system.  Let $(B, G,\beta)$ be a generalized unital partial $\Cast$-dynamical system with corresponding family of commuting projections $\{r_g\}_{g\in G}$. Suppose that there exists a $G$-equivariant inclusion of $\Cast$-algebras $A\subseteq B$. Then:

\begin{enumerate}
    \item If $\phi\colon I_G(A)\to B$ is a $G$-equivariant (not necessarily $G$-unital) ucp map extending the inclusion $A\subseteq B$, then~$\phi$ is completely isometric.

    \item If $r_g(A_g^\perp\cap A)=\trivial$ for all $g\in G$, then there is a $G$-morphism $\psi\colon B\to I_G(A)$ such that $\psi\vert_A=\id_A$. 
\end{enumerate}
\begin{proof} We may assume that $A$ is unital. For \emph{(i)}, notice that as in \cite[Example 2.3]{Busetal22} $\beta=\partacg{B}{\beta}$ extends uniquely to a generalized unital partial action $\beta^\ddual=(\{B_g^\ddual\}_{g\in G},\{\beta_g^\ddual\}_{g\in G})$ on the bidual $B^\ddual$ with $B_g^\ddual=r_gB^\ddual r_g$, the bidual of~$B_g$, and $\beta_g^\ddual\colon B_{g^\inv}^\ddual\to B_g^\ddual$ the unique normal extension of~$\beta_g$. For each $g\in G$ let~$q_g$ be the unit of $A_g^\ddual$, and notice that $q_gB^\ddual q_g\subseteq B_g^\ddual$ for all $g\in G$. We define another generalized unital partial action $\gamma=\partacg{C}{\gamma}$ on $B^\ddual$ by setting $C_g:=q_gB^\ddual q_g$ and $\gamma_g:=\beta^\ddual_g\vert_{C_g}$ for all $g\in G$. 

Let $\phi\colon I_G(A)\to B$ be a $G$-equivariant ucp map extending the inclusion $A\subseteq B$. By Lemma~\ref{lem:co-universal-unitization} there is a $G$-morphism $\Psi\colon \operatorname{Orb}_G^{\alpha^\ddual}(A)\to \operatorname{Orb}_G^{I(\alpha)}(A) $ such that $\Psi\vert_A=\id_A$. Since the inclusion $\operatorname{Orb}_G^{\alpha^\ddual}(A)\subseteq B^\ddual$ is a $G$-embedding with respect to $\gamma=\partacg{C}{\gamma}$, by $G$-injectivity of $I_G(A)$ we can find a $G$-morphism $\hat\Psi\colon B^\ddual\to I_G(A)$ extending~$\Psi$ and we define a ucp map $\rho\colon I_G(A)\to I_G(A)$ as the composition $\rho:=\hat{\Psi}\circ \phi$. We will show that $\rho$ is a $G$-morphism.

As usual let $p_g\in I_G(A)$ be the unit of~$I_G(A)_g$. Since $A\subseteq \dom_\rho$ we must have $\rho(p_g)=p_g$ because $p_gA_g^\perp=\trivial$. It follows that~$p_g$ lies in the multiplicative domain of~$\rho$. Using in addition that $q_g\in \dom_{\hat\Psi}$ for all $g\in G$ we compute for $c\in I_G(A)$ 
\begin{equation*}
    \begin{aligned}
        \rho(I_G(\alpha)_g(cp_{g^\inv}))&=\rho(p_g) \rho(I_G(\alpha)_g(cp_{g^\inv})) \rho(p_g)\\&=\hat{\Psi}(q_g)\hat{\Psi}\big(\phi(I_G(\alpha)_g(cp_{g^\inv}))\big)\hat{\Psi}(q_g)\\&=\hat{\Psi}(q_g\beta_g(\phi(cp_{g^\inv}))q_g)\\&=\hat{\Psi}\big(\beta^\ddual_g(q_{g^\inv}\phi(cp_{g^\inv})q_{g^\inv})\big)\\&=I_G(\alpha)_g\big(\hat{\Psi}(q_{g^\inv}\phi(cp_{g^\inv})q_{g^\inv}))\\&=I_G(\alpha)_g(\rho(cp_{g^\inv})),
    \end{aligned}
\end{equation*} where in the third equality above we used that $\phi$ is $G$-equivariant, in the second last equality we used that $\hat\Psi$ is a $G$-morphism with respect to $\gamma=\partacg{C}{\gamma}$, and in the last equality we used that $\rho(p_{g^\inv})=p_{g^\inv}$.

We have proved that $\rho\colon I_G(A)\to I_G(A)$ is a $G$-morphism. Since $\rho\vert_A=\id_A$ it follows from  Proposition~\ref{pro: rigidity-non-unital} that $\rho=\id_{I_G(A)}$. In particular $\rho$ is a complete isometry, and hence so is~$\phi$, establishing the first assertion in the theorem. For the second assertion, suppose that  $r_g(A_g^\perp\cap A) =\trivial$ for all $g\in G$. Then $\hat\Psi(r_g)=p_g$ for all $g\in G$, and a computation similar to the above shows that $\psi:=\hat\Psi\vert_B$ is in fact a $G$-morphism from~$B$ into $I_G(A)$ with respect to $\beta=\partacg{B}{\beta}$.
\end{proof}
\end{thm}

The following corollary is an immediate consequence of Theorem~\ref{thm: essentiality-equivariant-maps}. Recall that an ideal $J\ideal A$ is invariant under a partial action $\alpha=\partacg{A}{\alpha}$ if $\alpha_g(J\cap A_{g^\inv})\subseteq J$ for all $g\in G$.
\begin{cor} Let~$A$ be a unital $\Cast$-algebra and $\alpha=\partacg{A}{\alpha}$ a partial action on~$A$. If~$A$ has no non-trivial $\alpha$-invariant ideals, then $I_G(A)$ has no non-trivial $I_G(\alpha)$-invariant ideals. 
\begin{proof} Suppose~$A$ has no non-trivial $\alpha$-invariant ideals and let $J$ be a proper $I_G(\alpha)$-invariant ideal of~$I_G(A)$. Then $J\cap A$ is a proper $\alpha$-invariant ideal of~$A$ because~$A$ is unital. By assumption we must have $J\cap A=\trivial$, and it follows that the quotient map $\pi\colon I_G(A)\to I_G(A)/J$ is faithful on~$A$. Because $\pi$ is $G$-equivariant with respect to the partial action on $I_G(A)/J$ induced by $I_G(\alpha)$ we deduce from Theorem~\ref{thm: essentiality-equivariant-maps} that~$\pi$ is faithful, giving that $J=\trivial$ as required.
    \end{proof}

\end{cor}

\subsection{Injective envelopes of reduced crossed products}

In this subsection our main purpose is to show that the inclusion $A\crossed G\subseteq I_G(A)\crossed G$ is essential for an arbitrary partial action~$\alpha=\partacg{A}{\alpha}$, which implies the identity $I(A\crossed G)=I(I_G(A)\crossed G)$. This is a crucial tool when studying the ideal structure of crossed products. We will require a canonical generalized unital partial action on the injective envelope of $A\crossed G$ extending $\alpha$. In the unital setting this follows from Proposition~\ref{prop: gen part ac on crossed product}.

Let $\alpha=\partacg{A}{\alpha}$ be a partial action. Recall from Remark~\ref{rem: crossed-action-no-units} that for each~$g\in G$ the partial action~$\alpha$ restricts to a partial action on $A_g$ in a natural way, and we can identify the reduced partial crossed product $A_g\crossed G$ with the hereditary $\Cast$-subalgebra of $A\crossed G$ $$A_g\crossed G=\overline{\lincomb}\{ a \nu_h \, \vert \, a \in A_g\cap A_h\cap A_{hg}\}.$$ In particular, $I(A_g\crossed G)$ is a unital hereditary subalgebra of~$I(A\crossed G)$.

\begin{prop}\label{prop: gen part ac on crossed product-non-unital}
    Let $(A, G,\alpha)$ be a partial $\Cast$-dynamical system. Then there exists a generalized unital partial action $\hat{\alpha}=\partacg{I(A\crossed G)}{\hat{\alpha}}$ on $I(A\crossed G)$ such that $I(A\crossed G)_g=I(A_g\crossed G)$ for all $g\in G$ and the inclusion $A\subseteq I(A \crossed G) $ is $G$-equivariant. Moreover, there exists a faithful $G$-morphism $\Phi\colon I(A\crossed G)\to I_G(A)$ that restricts to the canonical conditional expectation $E_A\colon A\crossed G\to A$ on $A\crossed G$.
    \begin{proof} Fix $g\in G$. For $a\in  A_{g^{-1}}\cap A_h\cap A_{hg^{-1}}$ we see that $\alpha_g(a)\in A_g\cap A_{gh}\cap A_{ghg^{-1}}$. Hence the ${}^*$-isomorphism $\alpha_g\colon A_{g^{-1}}\to A_g$ extends to a ${}^*$-isomorphism $A_{g^{-1}}\crossed G\cong A_g\crossed G$ that maps a spanning element $a\nu_h\in A_{g^{-1}}\crossed G$ to $\alpha_g(a)\nu_{ghg^{-1}}\in A_g\crossed G$. This extends uniquely to a ${}^*$-isomorphism $\hat{\alpha}_g\colon I(A_{g^{-1}}\crossed G)\to I(A_g\crossed G)$ by an argument as in the proof of Proposition~\ref{prop: injective-p-action-hereditary}.

    Let $(u_\mu)_{\mu}$ be an increasing approximate identity for $A_g$. Then $(u_\mu)_{\mu}$ is an approximate identity for $A_g\crossed G$ and setting $r_g:=\sup_{\mu}u_\mu\in I(A\crossed G)$ gives the identification $I(A_g\crossed G)=r_gI(A\crossed G) r_g$. That the pair $\hat{\alpha}=(\{I(A_g\crossed G)\}_{g\in G}, \{\hat{\alpha}_g\}_{g\in G})$ satisfies the axioms of a generalized unital partial action follows because if $q_g$ denotes the unit of~$A_g^\ddual$ for all $g\in G$ and $\phi\colon (A\crossed G)^\ddual\to I(A\crossed G)$ is a ucp map extending the inclusion $A\crossed G\subseteq I(A\crossed G)$, then we must have $\phi(q_g)=r_g$. From this we see that $\{r_g\}_{g\in G}$ is a commuting family of projections in $I(A\crossed G)$ and satisfies $\hat{\alpha}_g(r_{g^\inv}r_h)=r_gr_{gh}$ for all $g,h\in G$ since $\alpha=\partacg{A}{\alpha}$ is a partial action. This proves the first part of the statement.

    For the second part, assume first that $A$ is $G$-injective, i.e. $A=I_G(A)$ and in particular~$\alpha$ is $G$-unital. Then the canonical conditional expectation $E_A\colon A\crossed G \to A$ is a $G$-morphism and hence it extends to a $G$-morphism $\Phi\colon I(A\crossed G)\to A$ by $G$-injectivity. To see that $\Phi$ is faithful, we argue as in \cite[Theorem~3.2]{Bryd22}. If $J:=\{d\in I(A\crossed G)\,\vert\, \Phi(d^*d)=0\},$ then $J\subseteq I(A\crossed G)$ is a $\Cast$-algebra that is also an $(A\crossed G)$-module satisfying $J\cap (A\crossed G)=\trivial$. By \cite[Lemma~1.2]{Ham82_centre} we must have $J=\trivial$, giving that~$\Phi$ is faithful.  
    
    In case $A$ is not $G$-injective, let $C:=\operatorname{Orb}_G^{I(\alpha)}(A)$ be the $\Cast$-algebra generated by the orbits of~$A$ in~$I(A)$ and let $\gamma=\partacg{C}{\gamma}$ be the unital partial action induced by~$I(\alpha)$. Let $\phi\colon C\crossed G\to I(A\crossed G)$ be a ucp map extending the inclusion $A\crossed G\subseteq I(A\crossed G)$. We claim that $\phi$ is $G$-unital. To see this, observe first that $\phi(p_g)\geq r_g$ for all $g\in G$ since $A_g\subseteq C_g=p_gC$. For the reverse inequality, let $\psi\colon I(A\crossed G)\to I(I_G(A)\crossed G)$ be a ucp map extending the inclusion $A\crossed G\subseteq I_G(A)\crossed G$. If $(u_\mu)_\mu$ is an increasing approximate identity for~$A_g$, then $p_g=\sup_{\mu}u_\mu$ in $I(I_G(A)\crossed G))$ by an application of the faithful conditional expectation $\Phi\colon I_G(A)\crossed G\to I_G(A)$ from above, or alternatively by \cite[Lemma~3.1]{Ham85} and Corollary~\ref{cor: G-essential crossed prod in I(A cross G)}. Hence $\psi(r_g)\geq p_g$. Let $\hat{\phi}\colon I(I_G(A)\crossed G)\to I(A\crossed G)$ be a ucp map extending $\phi\colon C\crossed G\to I(A\crossed G)$. Then we have $\hat{\phi}\circ\psi=\id_{I(A\crossed G)}$ by rigidity, giving that $\phi(p_g)\leq \hat{\phi}(\psi(r_g))=r_g$, and so $\phi(p_g)=r_g$.
    
    The above shows that $\phi$ is $G$-unital. To see that $\phi$ is in fact a ${}^*$-homomorphism, notice that on one hand $\phi(\nu_g)^*\phi(\nu_g)\leq \phi(p_{g^{-1}})=r_{g^\inv}$ and on the other hand, $$\phi(\nu_g)^*\phi(\nu_g)\geq \sup_{\mu}(\phi(\nu_g)^*u_\mu\phi(\nu_g))=\sup_{\mu}\alpha_{g^\inv}(u_\mu)=r_{g^\inv}.$$ Thus $\phi(\nu_g)^*\phi(\nu_g)=\phi(p_g)$ for all $g\in G$, which implies that each $\nu_g$ lies in the multiplicative domain of~$\phi$. We conclude that $\phi$ is a ${}^*$-homomorphism. An argument similar to that in the proof of Proposition~\ref{prop: A IIP implies I(A) IIP} shows that~$\phi$ is in addition faithful, giving the inclusion of $\Cast$-algebras $A\crossed G \subseteq C\crossed G\subseteq I(C\crossed G)$.

    The inclusion $C\crossed G\subseteq I(A\crossed G)$ established above is a $G$-embedding with respect to the generalized unital partial action on $I(A\crossed G)$ constructed in the first part of the proof. Now let $E_C\colon C\crossed G\to C$ be the canonical conditional expectation. Then $E_C$ is a $G$-morphism, and hence by $G$-injectivity of $I_G(A)$ there exists a $G$-morphism $\Phi\colon I(A\crossed G)\to I_G(A)$ such that $\Phi\vert_{C\crossed G}= E_C$. In particular, $\Phi$ restricts to the canonical conditional expectation on~$A\crossed G$. That $\Phi$ is faithful follows by the same argument applied for injective partial $\Cast$-dynamical systems. 
\end{proof}
    \end{prop}

We can now establish the analog of \cite[Theorem 3.4]{Ham85} in the context of partial $\Cast$-dynamical systems.

\begin{thm}\label{thm: G-essential crossed prod in I(A cross G)-non-unital}
    Let $(A,G,\alpha)$ and $(B, G,\beta)$ be partial $\Cast$-dy\-na\-mi\-cal  systems with a $G$-equi\-var\-iant inclusion of $\Cast$-algebras $A\subseteq B$. Then $$(A,G,\alpha) \subseteq (B, G,\beta) \subseteq (I_G(A), G, I_G(\alpha))$$ if and only if there is a $G$-equivariant inclusion of $\Cast$-algebras $$A\crossed G \subseteq B\crossed G \subseteq I(A\crossed  G).$$ In particular, $I(A\crossed G)=I(I(A)\crossed G)=I(I_G(A)\crossed G)$.
    \begin{proof} For the forward direction, it suffices to show that $A\crossed G\subseteq I_G(A)\crossed G\subseteq I(A\crossed G)$. Let $\rho\colon I_G(A)\crossed G\to I(A\crossed G)$ be a ucp map extending the inclusion $A\crossed G\subseteq I(A\crossed G)$. We will show that $\rho$ is a ${}^*$-homomorphism. The argument in the proof of Proposition~\ref{prop: gen part ac on crossed product-non-unital} shows that $\rho$ is $G$-unital and restricts to a faithful ${}^*$-homomorphism from $C\crossed G$ to $I(A\crossed G)$, where $C=\mathrm{Orb}_G^{I(\alpha)}(A)$. So it suffices to show that $\rho\vert_{I_G(A)}$ is multiplicative. 
    
    Let $\Phi\colon I(A\crossed G)\to I_G(A)$ be the faithful $G$-morphism extending the canonical conditional expectation as in Proposition~\ref{prop: gen part ac on crossed product-non-unital}. Then~$\Phi\circ\rho\vert_{I_G(A)}=\id_{I_G(A)}$ by Proposition~\ref{pro: rigidity-non-unital} and hence for $c\in I_G(A)$ we have $$c^*c=\Phi(\rho(c))^*\Phi(\rho(c))\leq \Phi(\rho(c)^*\rho(c))\leq \Phi(\rho(c^*c))=c^*c,$$ giving that $\Phi(\rho(c)^*\rho(c))=\Phi(\rho(c^*c))$. Since $\rho(c)^*\rho(c)\leq \rho(c^*c)$ and $\Phi$ is faithful this implies $\rho(c)^*\rho(c)=\rho(c^*c)$. Because this holds for arbitrary $c\in I_G(A)$ we deduce that $\rho\vert_{I_G(A)}$ is multiplicative, and hence $\rho$ is a ${}^*$-homomorphism as needed. Since the composition $\Phi\circ\rho\colon I_G(A)\crossed G\to I_G(A)$ coincides with the canonical conditional expectation on $I_G(A)\crossed G$, we conclude that $\rho$ is faithful, giving the desired inclusion.

    Conversely, suppose the inclusion $A\subseteq B$ induces $G$-equivariant inclusions $A\crossed G \subseteq B\crossed G \subseteq I(A\crossed  G)$. Then $A\crossed G\subseteq B\crossed G$ is essential, and it follows that $I(A\crossed G)=I(B\crossed G)$. From the first part we also have inclusions $B\crossed G\subseteq I_G(B)\crossed G\subseteq I(B\crossed G)$. If $\{r_g\}_{g\in G}$ and $\{p_g\}_{g\in G}$ denote the family of central projections corresponding to $I_G(\alpha)$ and $I_G(\beta)$, respectively, then we must have $p_g= r_g$ by $G$-equivariance since $A_g\crossed G\subseteq B_g\crossed G\subseteq p_gI(A\crossed G)p_g$. So let $\Phi_A\colon I(A\crossed G)\to I_G(A)$ be the faithful $G$-morphism extending the canonical conditional expectation~$E_A$ and let $\Phi_B$ be the faithful $G$-morphism $I(A\crossed G)\to I_G(B)$ extending $E_B$. Set $\psi:=\Phi_A\vert_{I_G(B)}$. Then $\psi\colon I_G(B)\to I_G(A)$ is a $G$-morphism extending the inclusion $A\subseteq I_G(A)$, and by Proposition~\ref{pro: rigidity-non-unital} we have $\psi\circ\Phi_B\vert_{I_G(A)}=\id_{I_{G}(A)}$. Since the composition $\psi\circ\Phi_B\colon I(A\crossed G)\to I_G(A)$ is a faithful $G$-morphism as it coincides with the canonical conditional expectation on $A\crossed G$, the same reasoning applied in the first part gives that $\Phi_B\vert_{I_G(A)}$ is a ${}^*$-homomorphism. This gives a $G$-embedding of $\Cast$-algebras $I_G(A)\subseteq I_G(B)$. 
    
    To see that $\Phi_B\vert_{I_G(A)}$ is surjective, we argue as in \cite[Lemma~3.3]{Ham85}. Set $\rho:=\Phi_B\vert_{I_G(A)}$ and take $d\in I_G(B)$.  Then $$0\leq \psi\big((\rho(\psi(d))-d)^*(\rho(\psi(d))-d)\big)=0$$ since $\psi\circ\rho=\id_{I_G(A)}$ and $\operatorname{im}\rho\subseteq \dom_\psi$. This gives $\rho(\psi(d))=d$ as $\psi\circ\Phi_B$ is faithful. So $\rho\colon I_G(A)\to I_G(B)$ is a $G$-isomorphism, giving the desired inclusions $A\subseteq B\subseteq I_G(A)$.  
        \end{proof}
   \end{thm}

   \begin{rem} In the proofs of Proposition~\ref{prop: gen part ac on crossed product-non-unital} and Theorem~\ref{thm: G-essential crossed prod in I(A cross G)-non-unital} we have avoided using monotone crossed products, and hence also Corollary~\ref{cor: G-essential crossed prod in I(A cross G)} and \cite[Theorem~3.4]{Ham85}.
   \end{rem}

The following corollary is an immediate consequence of Theorem~\ref{thm: G-essential crossed prod in I(A cross G)-non-unital} and the general theory of injective envelopes of $\Cast$-algebras, see \cite[Theorems 6.3 and 7.1]{Ham81}.

\begin{cor}\label{cor: primality}
    Let $\partacg{A}{\alpha}$ be a partial $\Cast$-dynamical system. Then the following are equivalent:
    \begin{enumerate}
        \item $A\crossed G$ is prime;
         \item  $I(A)\crossed G$ is prime;
         \item  $I_G(A)\crossed G$ is prime;
         \item $Z(I( A\crossed G))=\mathbb{C}.$
    \end{enumerate}
\end{cor}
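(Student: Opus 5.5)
The plan is to reduce everything to the identity $I(A\crossed G)=I(I(A)\crossed G)=I(I_G(A)\crossed G)$ from Theorem~\ref{thm: G-essential crossed prod in I(A cross G)-non-unital}, combined with Hamana's general facts on injective envelopes: a $\Cast$-algebra $D$ is prime if and only if $I(D)$ is prime, and a monotone complete (in particular injective) $\Cast$-algebra $M$ is prime if and only if $Z(M)=\mathbb{C}$ \cite[Theorems 6.3 and 7.1]{Ham81}. The first step is to record the chain of inclusions
\[
A\crossed G\subseteq I(A)\crossed G\subseteq I_G(A)\crossed G\subseteq I(A\crossed G),
\]
which Theorem~\ref{thm: G-essential crossed prod in I(A cross G)-non-unital} provides, applied with $B=I(A)$ and with $B=I_G(A)$. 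The hypotheses hold because $(A,G,\alpha)\subseteq(I(A),G,I(\alpha))\subseteq(I_G(A),G,I_G(\alpha))$ by Proposition~\ref{prop: partac on I(A)} and the definition $I_G(A)=I_G(I(A))$. Consequently, each of the three crossed products sits in $I(A\crossed G)$ as an essential extension, and its injective envelope is $I(A\crossed G)$.

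Next I would prove (i)$\Leftrightarrow$(iv), (ii)$\Leftrightarrow$(iv) and (iii)$\Leftrightarrow$(iv) uniformly. Let $D$ be any of $A\crossed G$, $I(A)\crossed G$, $I_G(A)\crossed G$. Then $I(D)=I(A\crossed G)$, so $D$ is prime if and only if $I(A\crossed G)$ is prime. The latter is equivalent to $Z(I(A\crossed G))=\mathbb{C}$ by Hamana's characterization for injective algebras.

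The main obstacle is just making sure the quoted results apply as stated. This means checking that ``$D$ prime $\Leftrightarrow$ $I(D)$ prime'' holds for non-unital $D$ (recalled in Subsection~\ref{subsec: injective-alg}), and that prime injective $\Cast$-algebras are exactly the factors, i.e. have trivial centre. If needed, I would argue the first directly using essentiality. Suppose $D$ is not prime, so there are nonzero orthogonal ideals $J_1,J_2$ of $D$. Their central supports in $I(D)$ (via $I(J_i)=z_iI(D)$) are orthogonal central projections, so $I(D)$ is not prime. Conversely, suppose $I(D)$ is not prime, witnessed by nonzero orthogonal ideals $K_1,K_2$. By the ideal intersection property of $D\subseteq I(D)$, the ideals $K_i\cap D$ are nonzero and orthogonal, so $D$ is not prime.
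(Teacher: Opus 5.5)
Your proposal is correct and is essentially the paper's own argument. The paper deduces the corollary directly from the identity $I(A\crossed G)=I(I(A)\crossed G)=I(I_G(A)\crossed G)$ in Theorem~\ref{thm: G-essential crossed prod in I(A cross G)-non-unital}, together with Hamana's results that $D$ is prime iff $I(D)$ is prime iff $Z(I(D))=\mathbb{C}$ \cite[Theorems 6.3 and 7.1]{Ham81}. Your optional direct proof of ``$D$ prime $\Leftrightarrow$ $I(D)$ prime'' also works, once you justify $z_1z_2=0$: since $J_2\subseteq J_1^\perp=D\cap(1-z_1)I(D)$, an approximate unit of $J_2$, and hence its supremum $z_2$, lies below $1-z_1$.
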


\subsection{Pseudo-expectations for non-unital partial actions}

We now consider pseudo-ex\-pec\-ations in the context of partial actions that are not necessarily unital.

\begin{defi}\label{def: pseudo-non-unital} We define a \emph{pseudo-expectation} for a partial $\Cast$-dynamical system $(A, G,\alpha)$ to be a $G$-equivariant ccp map $\phi \colon A\rtimes_r G \to I_G(A)$ such that $\phi\vert_A = \id_A$.
\end{defi}

We verify next that pseudo-expectations for $(A,G,\alpha)$ indeed correspond to $G$-equi\-var\-iant conditional expectations for $(I_G(A), G, I_G(\alpha))$.
\begin{prop}\label{pro: pseudo-expec=cond-expec-nonunital}
    Let $(A, G, \alpha)$ be a partial $\Cast$-dynamical system. Then there is a one-to-one correspondence between pseudo-expectations $\phi \colon A \crossed G \to I_G(A)$ for $(A, G, \alpha)$ and $G$-equivariant conditional expectations $\Phi \colon I_G(A) \crossed G \to I_G(A)$.
    \begin{proof} By Proposition~\ref{prop: 1-1 pseudo-exp with cond exp} it suffices to show that there is a one-to-one correspondence between pseudo-expectations for $(A, G,\alpha)$ and pseudo-expectations for $(C,G,\gamma)$, where $C=\operatorname{Orb}_G^{I(\alpha)}(A)$ and $\gamma=\partacg{C}{\gamma}$ is the unital partial action on~$C$ induced by~$I(\alpha)$. Let $\phi$ be a pseudo-expectation for $(A,G, \alpha)$ and let $\hat{\phi}\colon C\crossed G\to I_G(A)$ be a ucp extension of~$\phi$. Then $\hat{\phi}$ is necessarily $G$-unital since $A\subseteq \dom_{\hat{\phi}}$. We will show that $\hat\phi$ is also $G$-equivariant. 
    
    Since $\hat{\phi}$ is $G$-unital we have that $\hat{\phi}(\nu_h)=p_h\hat{\phi}(\nu_h)$ for all $h\in G$ . For $g, h\in G$ set $d:=p_{g^\inv}\nu_hp_{g^\inv}\in C_{g^\inv}\crossed G$. Then $\hat{\phi}(\hat{\alpha_g}(d))=p_{gh}p_gp_{gh g^\inv}\hat{\phi}(\hat{\alpha_g}(d))$. Let $a\in A_g\cap A_{gh}\cap A_{gh g^\inv}$, so that $\alpha_{g^\inv}(a)\in A_{g^\inv}\cap A_{h}\cap A_{hg^\inv}$, and observe that $\alpha_{g^\inv} (a)d=\alpha_{g^\inv}(a)\nu_h\in A_g\crossed G$. Since $\hat{\phi}$ extends $\phi$ and $\phi$ is $G$-equivariant we get $$a\hat{\phi}(\hat{\alpha}_g(d))=I(\alpha)_g(\phi(\alpha_{g^\inv}(a)d))=I(\alpha)_g(\hat{\phi}(\alpha_{g^\inv}(a)d))=aI_G(\alpha)_g(\hat{\phi}(d)),$$ giving that
    $$(\hat{\phi}(\hat{\alpha}_g(d))-I_G(\alpha)_g(\hat{\phi}(d)))^*a(\hat{\phi}(\hat{\alpha}_g(d))-I_G(\alpha)_g(\hat{\phi}(d)))=0.$$ Since $p_{gh}p_gp_{gh g^\inv}$ satisfies $p_{gh}p_gp_{gh g^\inv}=\sup_\mu u_\mu$ for $(u_\mu)_{\mu}$ an increasing approximate identity for $A_g\cap A_{gh}\cap A_{gh g^\inv}$ we deduce that $\hat{\phi}(\hat{\alpha}_g(d))=I_G(\alpha)_g(\hat{\phi}(d))$. A similar argument shows that $\hat{\phi}(I(\alpha)_g(ap_{g^\inv}))=I_G(\alpha)_g(\hat{\phi}(ap_{g^{-1}}))$ for all $g\in G$ and $a\in A$ and hence $\hat{\phi}$ is a $G$-morphism as needed. Uniqueness of $\hat{\phi}$ follows similarly.
         \end{proof}
\end{prop}

We also have a characterization of the ideal intersection property in terms of pseudo-expectation as in the unital setting.

\begin{cor}\label{cor: pseudo-IIP} Let $(A, G, \alpha)$ be a partial $\Cast$-dynamical system. Then $(A, G, \alpha)$ has the ideal intersection property if and only if every pseudo-expectation for $(A, G, \alpha)$ is faithful.
\end{cor}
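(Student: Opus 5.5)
We prove the two implications separately, modelling the argument on Corollary~\ref{cor: pseudo-and-IIP} and using Proposition~\ref{pro: pseudo-expec=cond-expec-nonunital} together with Theorem~\ref{thm: Equivalent IIP A to I(A)} to handle the lack of units.

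\emph{Forward direction.} Suppose $(A,G,\alpha)$ has the ideal intersection property and let $\phi$ be a pseudo-expectation. By Proposition~\ref{pro: pseudo-expec=cond-expec-nonunital}, $\phi$ is the restriction of a $G$-equivariant conditional expectation $\Phi\colon I_G(A)\crossed G\to I_G(A)$. We set
\[
J_\Phi:=\{x\in I_G(A)\crossed G\mid \Phi(x^*x)=0\}.
\]
The argument of Lemma~\ref{lem: faithful kernel of pseudo-exp}, applied to the unital system $(I_G(A),G,I_G(\alpha))$, shows that $J_\Phi$ is an ideal with $J_\Phi\cap I_G(A)=\trivial$. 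By Theorem~\ref{thm: Equivalent IIP A to I(A)} (1)$\Rightarrow$(3), $(I_G(A),G,I_G(\alpha))$ has the ideal intersection property, so $J_\Phi=\trivial$. Hence $\Phi$ is faithful, and so is its restriction $\phi$.

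\emph{Reverse direction.} Suppose every pseudo-expectation is faithful. By Theorem~\ref{thm: Equivalent IIP A to I(A)} it suffices to show that $(I_G(A),G,I_G(\alpha))$ has the ideal intersection property. Corollary~\ref{cor: pseudo-and-IIP} reduces this to showing that every pseudo-expectation for the unital system $(I_G(A),G,I_G(\alpha))$ is faithful.

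Such a pseudo-expectation is a $G$-morphism $\Phi\colon I_G(A)\crossed G\to I_G(I_G(A))=I_G(A)$ restricting to the identity on $I_G(A)$, that is, a $G$-equivariant conditional expectation. By Proposition~\ref{pro: pseudo-expec=cond-expec-nonunital}, its restriction $\phi=\Phi\vert_{A\crossed G}$ is a pseudo-expectation for $(A,G,\alpha)$, and $\phi$ is faithful by hypothesis.

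The main obstacle is passing faithfulness from $\phi$ back up to $\Phi$. Our plan is to use Theorem~\ref{thm: G-essential crossed prod in I(A cross G)-non-unital}, which gives $A\crossed G\subseteq I_G(A)\crossed G\subseteq I(A\crossed G)$. Again $J_\Phi$ is an ideal of $I_G(A)\crossed G$, and we have $J_\Phi\cap(A\crossed G)=J_\phi=\trivial$. Since $I_G(A)\crossed G$ sits inside $I(A\crossed G)$, the inclusion $A\crossed G\subseteq I_G(A)\crossed G$ is essential, so every nonzero ideal of $I_G(A)\crossed G$ meets $A\crossed G$ nontrivially; concretely, one can use \cite[Lemma~1.2]{Ham82_centre}, as in Proposition~\ref{prop: gen part ac on crossed product-non-unital}. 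Therefore $J_\Phi=\trivial$ and $\Phi$ is faithful, which completes the proof.
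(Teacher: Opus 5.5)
Your proof is correct and follows the derivation the paper leaves implicit; the corollary is stated without proof right after Proposition~\ref{pro: pseudo-expec=cond-expec-nonunital}. You pass to the unital system $(I_G(A),G,I_G(\alpha))$ using that correspondence together with Theorem~\ref{thm: Equivalent IIP A to I(A)} and Corollary~\ref{cor: pseudo-and-IIP}, and the essential inclusion $A\crossed G\subseteq I_G(A)\crossed G\subseteq I(A\crossed G)$ from Theorem~\ref{thm: G-essential crossed prod in I(A cross G)-non-unital} supplies the one step the paper does not spell out: $\Phi$ is faithful whenever $\phi=\Phi\vert_{A\crossed G}$ is.

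As a minor simplification, the forward direction does not need (1)$\Rightarrow$(3). The set $J_\phi=J_\Phi\cap(A\crossed G)$ is already an ideal of $A\crossed G$ with $J_\phi\cap A=\trivial$, so the ideal intersection property of $(A,G,\alpha)$ gives $J_\phi=\trivial$ directly.
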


\section{Injective envelopes for abelian partial \texorpdfstring{$\Cast$}{C*}-dynamical systems}\label{sec: abelian-groupoid}

 In this section we fix a unital commutative $\Cast$-algebra $A=\mathrm{C}(X)$ for $X$ a compact Hausdorff space and a partial action $\alpha_g=(\{\mathrm{U}_g\}_{g\in G}, \{\alpha_g\}_{g\in G})$ of $G$ on $X$. This gives rise to an abelian partial $\Cast$-dynamical system $\alpha^*=(\{\mathrm{C}_0(\mathrm{U}_{g})\}_{g\in G},\{\alpha^*_g\}_{g\in G})$ (see Example~\ref{ex: abelian-partial-ac}). In order to lighten notation we will simply denote by $\alpha_g$ instead of $\alpha_g^*$ the ${}^*$-isomorphism $\mathrm{C}_0(\mathrm{U}_{g^\inv})\cong \mathrm{C}_0(\mathrm{U}_g)$ induced by $\alpha_g$. In addition, in what follows we will write $gx$ instead of $\alpha_g(x)$ for the image of $x\in \mathrm{U}_{g^{-1}}$ in $\mathrm{U}_g$ under~$\alpha_g$.
 
The \emph{transformation groupoid} associated to the partial action $\alpha=(\{\mathrm{U}_g\}_{g\in G}, \{\alpha_g\}_{g\in G})$, denoted by $\mathcal{G}_\alpha\ltimes X$, is $$\mathcal{G}_\alpha\ltimes X=\{(g,x)\,\vert \, x\in \mathrm{U}_{g^{-1}}\}$$ equipped with the relative product topology, range and source maps $s,r$ given by~$s(g,x)=x$ and $r(g,x)=gx$, composition defined by $(g, hx)(h,x)=(gh,x)$ and inverse $(g,x)^{-1}=(g^{-1}, gx)$. This is a locally compact \'{e}tale Hausdorff groupoid with unit space~$X$. By~\cite{Aba04} the reduced partial crossed product $\mathrm{C}(X)\crossed G$ is canonically isomorphic to the reduced groupoid $\Cast$-algebra of $\mathcal{G}_\alpha\ltimes X$. We will see that the injective envelope of $\mathrm{C}(X)$ in the category of generalized unital partial $\Cast$-dynamical systems coincides with the injective envelope of $\mathrm{C}(X)$ in the category of unital $(\mathcal{G}_\alpha\ltimes X)$-$\Cast$-algebras as constructed in \cite{Bor20, Ken+21}.

We will follow the terminology and notation from~\cite{Ken+21}. Let $B$ be a unital $\Cast$-algebra and $\iota_B\colon \mathrm{C}(X)\hookrightarrow B$ a unital inclusion of $\Cast$-algebras. For an open subset $V\subseteq X$ let $B_V$ be the hereditary subalgebra of~$B$ defined by $$B_{V}:=\overline{\iota_B(\mathrm{C}_0(V)) B\iota_B(\mathrm{C}_0(V))}.$$ Notice that $B_V$ is an ideal of $B$ if $B$ is commutative.%

\begin{lem}\label{lem: partial-action-groupoid} Let $B$ be a unital commutative $\Cast$-algebra with a unital inclusion $\iota_B\colon \mathrm{C}(X)\hookrightarrow B$ of $\Cast$-algebras. Then $B$ is a $(\mathcal{G}_\alpha\ltimes X)$-$\Cast$-algebra as in \textup{\cite[Definition~3.3]{Ken+21}} if and only if for each $g\in G$ there exists a ${}^*$-isomorphism $\beta_g\colon B_{\mathrm{U}_{g^\inv}}\to  B_{\mathrm{U}_{g}}$ such that the pair $\beta=(\{B_{\mathrm{U}_g}\}_{g\in G},\{\beta_g\}_{g\in G})$ is a partial action on~$B$ and the inclusion $\iota_B$ is $G$-equivariant. 
 \begin{proof} Suppose that $B$ is a $(\mathcal{G}_\alpha\ltimes X)$-$\Cast$-algebra. For $g\in G$ we have that $(g,\mathrm{U}_{g^\inv})$ is an open bisection of $(\mathcal{G}_\alpha\ltimes X)$, i.e. an open subset of $\mathcal{G}_\alpha\ltimes X$ on which the range and source maps are homeomorphisms onto their images, and thus it follows from \cite[Definition~3.3]{Ken+21} that there exists a ${}^*$-isomorphism $\beta_g\colon B_{\mathrm{U}_{g^\inv}}\to  B_{\mathrm{U}_{g}}$. By definition of a $(\mathcal{G}_\alpha\ltimes X)$-$\Cast$-algebra the inclusion $\iota_B\colon \mathrm{C}(X)\hookrightarrow B$ is equivariant with respect to the $(\mathcal{G}_\alpha\ltimes X)$-$\Cast$-algebra structure of~$B$, and hence we must have that $\beta_g(B_{\mathrm{U}_{g^\inv}}\cap B_{\mathrm{U}_h})\subseteq B_{\mathrm{U}_{gh}}$. Axiom (iii) from the definition of a partial action follows from the compatibility condition in~\cite[Definition~3.3]{Ken+21}. This shows that $\beta=\partacg{B}{\beta}$ is a partial action for which the inclusion $\iota_B\colon \mathrm{C}(X)\hookrightarrow B$ is $G$-equivariant.

 Conversely, suppose that for each $g\in G$ there exists a ${}^*$-isomorphism $\beta_g\colon B_{\mathrm{U}_{g^\inv}}\to  B_{\mathrm{U}_{g}}$ such that the pair $\beta=(\{B_{\mathrm{U}_g}\}_{g\in G},\{\beta_g\}_{g\in G})$ is a partial action on~$B$ and the inclusion~$\iota_B$ is $G$-equivariant. From the groupoid structure of $\mathcal{G}_\alpha\ltimes X$ we see that if~$\gamma$ is a bisection of $\mathcal{G}_\alpha\ltimes X$, then $\gamma=\bigcup_{g\in G}(g,V_{g^{-1}})$, where $\{V_{g^\inv}\}_{g\in G}$ and $\{\alpha_g(V_{g^\inv})\}_{g\in G}$ are families of mutually disjoint open subsets of~$X$. The assumption that $B$ is commutative implies that we can identify $B_{\mathrm{supp}\,\gamma}$ with the $\mathrm{c}_0$-direct sum of the $B_{V_{g^\inv}}$s, where $\mathrm{supp}\,\gamma=s(\gamma)\subseteq X$. Similarly we can identify $B_{\mathrm{im}\,\gamma}$ with the $\mathrm{c}_0$-direct sum of the $B_{\alpha_g(V_{g^\inv})}$s, where $\mathrm{im}\,\gamma=r(\gamma)$. By $G$-equivariance of the inclusion  $\iota_B\colon \mathrm{C}(X)\hookrightarrow B$ we have that $\beta_g$ restricts to a ${}^*$-isomorphism $B_{V_{g^\inv}}\to B_{\alpha_g(V_{g^\inv})}$ for each $g\in G$. By the above observation this induces a ${}^*$-isomorphism $\beta_\gamma\colon B_{\mathrm{supp}\,\gamma}\to  B_{\mathrm{im}\,\gamma}.$ The compatibility axiom from \cite[Definition~3.3]{Ken+21} then follows because $\beta_g\colon B_{\mathrm{U}_{g^\inv}}\to  B_{\mathrm{U}_{g}}$ is a partial action.
\end{proof}   
\end{lem}

By \cite[Theorem~4.9]{Ken+21} the $\Cast$-algebra $\mathrm{C}(X)$ admits an injective envelope in the category of unital $(\mathcal{G}_\alpha\ltimes X)$-$\Cast$-algebras with $(\mathcal{G}_\alpha\ltimes X)$-ucp maps as morphisms. This coincides with the injective envelope of $\mathrm{C}(X)$ in the category of operator systems with $\mathcal{G}_\alpha\ltimes X$ actions as constructed by Borys in \cite{Bor20} (see \cite[Theorem~4.20]{Ken+21}). The injective envelope of the $(\mathcal{G}_\alpha\ltimes X)$-$\Cast$-algebra $\mathrm{C}(X)$ is again a commutative $\Cast$-algebra, whose spectrum we denote by $\partial_H(\mathcal{G}_\alpha\ltimes X)$, following the notation of~\cite{Ken+21}.

\begin{thm}\label{thm: abelian-partial-actions} Let $\mathcal{C}:=\mathrm{C}(\partial_H(\mathcal{G}_\alpha\ltimes X))$ be the injective envelope of the $(\mathcal{G}_\alpha\ltimes X)$-$\Cast$-algebra $\mathrm{C}(X)$ in the category of unital $(\mathcal{G}_\alpha\ltimes X)$-$\Cast$-algebras. Let $\gamma=(\{\mathcal{C}_{\mathrm{U}_g}\}_{g\in G},\{\gamma_g\}_{g\in G})$ be the partial action on~$\mathcal{C}$ from Lemma~\ref{lem: partial-action-groupoid} and let $I(\gamma)=\partacg{\mathcal{C}}{I(\gamma)}$ be the unital partial action on~$\mathcal{C}$ extending $\gamma$ as in Proposition~\ref{prop: partac on I(A)}. Then there exists a $G$-isomorphism $\Psi\colon \mathcal{C}\to I_G(\mathrm{C}(X)) $ such that $\Psi\circ\iota=\id_{\mathrm{C}(X)}$. 
\begin{proof} For each $g\in G$ let $r_g\in\mathcal{C}$ be the unit of the ideal $\mathcal{C}_g$. Since $r_g$ is the supremum of any increasing approximate identity for $\mathrm{C}(\mathrm{U}_{g})$, we must have $r_g( \mathrm{C}(\mathrm{U}_g)^\perp\cap\mathrm{C}(X))=\trivial$. Then by Theorem~\ref{thm: essentiality-equivariant-maps} there exists a $G$-morphism $\Psi\colon \mathcal{C}\to I_G(\mathrm{C}(X)) $ with $\Psi\circ\iota=\id_{\mathrm{C}(X)}$. View $I_G(\mathrm{C}(X))$ with the $(\mathcal{G}_\alpha\ltimes X)$-$\Cast$-algebra structure induced by the partial action $I_G(\alpha)=\partacg{I_G(\mathrm{C}(X))}{I_G(\alpha)}$ as in Lemma~\ref{lem: partial-action-groupoid}. Then~$\Psi$ is a $(\mathcal{G}_\alpha\ltimes X)$-ucp map as in \cite[Definition~3.5]{Ken+21}, and we deduce from essentiality of the inclusion $\iota\colon \mathrm{C}(X)\hookrightarrow \mathcal{C}$ that~$\Psi$ is completely isometric.

To see that $\Psi$ is surjective, we apply $(\mathcal{G}_\alpha\ltimes X)$-injectivity of~$\mathcal{C}$ to get a $(\mathcal{G}_\alpha\ltimes X)$-ucp map $\phi\colon I_G(\mathrm{C}(X))\to \mathcal{C}$ extending the inclusion~$\iota$. We claim that~$\phi$ is $G$-unital. Indeed, the composition $\phi\circ\Psi\colon\mathcal{C}\to\mathcal{C}$ is a $(\mathcal{G}_\alpha\ltimes X)$-ucp map with $\phi\circ\Psi\vert_{\mathrm{C}(X)}=\id_{\mathrm{C}(X)}$, and hence $\phi\circ\Psi=\id_{\mathcal{C}}$ by rigidity of the inclusion of $\iota\colon \mathrm{C}(X)\hookrightarrow \mathcal{C}$. It follows that $r_g=\phi(\Psi(r_g))=\phi(p_g)$ for all $g\in G$, where~$p_g$ is the unit of $I_G(\mathrm{C}(X))_g$, proving that~$\phi$ is $G$-unital. For $G$-equivariance of~$\phi$, take a positive element $c\in I_G(\mathrm{C}(X))_+$ and for $g\in G$ let $(u_\mu)_{\mu}$ be an increasing approximate identity for~$\mathrm{C}(\mathrm{U}_g)$. We compute using that~$\mathcal{C}$ is commutative and~$\phi$ is $G$-unital \begin{equation*}
\begin{aligned}
    \phi(I_G(\alpha)_g(c))&=\phi(I_G(\alpha)_g(c))\sup_\mu \iota(u_\mu)=\sup_\mu \phi(I_G(\alpha)_g(c))\iota(u_\mu)\\&=\sup_\mu \phi(I_G(\alpha)_g(c)u_\mu)=\sup_\mu I(\gamma)_g(\phi(c\gamma_{g^\inv}(u_\mu)))\\&=I(\gamma)_g(\sup_\mu \phi(c\gamma_{g^\inv}(u_\mu)))=I(\gamma)_g(\phi(c) p_{g^\inv})\\&=I(\gamma)_g(\phi(c)),
    \end{aligned}
    \end{equation*}
where in the fourth equality we used that~$\phi$ is a $(\mathcal{G}_\alpha\ltimes X)$-ucp map. This shows that $\phi\colon I_G(\mathrm{C}(X))\to \mathcal{C}$ is a $G$-morphism. Since $\Psi\circ \phi\vert_{\mathrm{C}(X)}=\id_{\mathrm{C}(X)}$ we must have $\Psi\circ \phi=\id_{I_G(\mathrm{C}(X))}$ by Proposition~\ref{pro: rigidity-non-unital}, which implies that $\Psi$ is surjective and hence a $G$-isomorphism as wanted. 
\end{proof}
    
\end{thm}

\begin{rem} Theorem~\ref{thm: abelian-partial-actions} combined with \cite[Theorem~4.2.6]{Kroell25} implies that the characterization of the ideal intersection property from \cite[Theorem~7.2]{Ken+21} applies to the transformation groupoid associated to an arbitrary partial action of~$G$ on a compact space. 
    \end{rem}

\addcontentsline{toc}{section}{References}
\printbibliography

\end{document}

%% file: commands.tex
\theoremstyle{definition}
\newtheorem{defi}{Definition}[section]

\theoremstyle{plain}
\newtheorem{thm}[defi]{Theorem}
\newtheorem{cor}[defi]{Corollary}
\newtheorem{prop}[defi]{Proposition}
\newtheorem{lem}[defi]{Lemma}

\newtheorem{introthm}{Theorem}
\newtheorem{introcor}[introthm]{Corollary}

\theoremstyle{remark}
\newtheorem{examp}[defi]{Example}
\newtheorem{rem}[defi]{Remark}

\usepackage{todonotes}

\newcommand{\inv}{{-1}}

\newcommand{\id}{\mathrm{id}}

\newcommand{\ideal}{\trianglelefteq}
\newcommand{\dom}{\mathrm{Dom}}
\newcommand{\Cast}{\text{C}^*}

\newcommand{\trivial}{\{0\}}
\newcommand{\crossed}{\rtimes_\mathrm{r}}
\newcommand{\lincomb}{\mathrm{span}}

\newcommand{\ddual}{{**}}

\newcommand{\partacg}[2]{(\{{#1}_g\}_{g\in G},\allowbreak\{{#2}_g\}_{g\in G})}

\newcommand{\cC}{\mathcal{C}}

\newcommand{\hilb}{\mathcal{H}}